\numberwithin{equation}{section}
\newcommand*\bigcdot{\mathpalette\bigcdot@{.5}}
\newcommand*\bigcdot@[2]{\mathbin{\vcenter{\hbox{\scalebox{#2}{$\m@th#1\bullet$}}}}}
\newcommand\R{\mathbb{R}}
\newcommand\Z{\mathbb{Z}}
\newcommand\N{\mathbb{N}}
\newcommand\C{\mathbb{C}}
\newcommand\T{\mathbb{T}}
\newcommand\Tr{\mathrm{Tr}}
\newcommand\id{\mathrm{Id}}
\newcommand{\cA}{\mathcal{A}}
\newcommand{\cQ}{\mathcal{Q}}
\newcommand{\cB}{\mathcal{B}}
\newcommand{\cP}{\mathcal{P}}
\newcommand{\cL}{\mathcal{L}}
\newcommand{\cG}{\mathcal{G}}
\newcommand{\cT}{\mathcal{T}}
\newcommand{\cU}{\mathcal{U}}
\newcommand{\cS}{\mathcal{S}}
\newcommand{\cC}{\mathcal{C}}
\newcommand{\cE}{\mathcal{E}}
\newcommand{\To}{\longrightarrow}
\DeclareMathOperator{\supp}{supp}
\DeclareMathOperator{\Lip}{Lip}
\subjclass[2010]{34B45, 81Q10, 46N50.}
\keywords{Quantum graphs, empirical spectral measures, Benjamini-Schramm convergence.}
\begin{document}

\newcommand{\dd}{\mathrm{d}}
\newcommand{\ee}{\mathrm{e}}
\newcommand{\ii}{\mathrm{i}}
\newcommand{\rme}{\ee}  %
\newcommand{\rmi}{\ii}  
\renewcommand{\Re}{\mathop{\mathfrak{Re}}}
\renewcommand{\Im}{\mathop{\mathfrak{Im}}}
\newtheorem{theoreme}{Theorem}[section]
\newtheorem{definition}[theoreme]{Definition}
\newtheorem{lemme}[theoreme]{Lemma}
\newtheorem{remarque}[theoreme]{Remark}
\newtheorem{exemple}[theoreme]{Example}
\newtheorem{proposition}[theoreme]{Proposition}
\newtheorem{corollaire}[theoreme]{Corollary}
\newtheorem{hyp}{Hypothesis}
\newtheorem*{theo}{Theorem}
\newtheorem*{prop}{Proposition}
\newtheorem*{main*}{Main results}

\newcommand{\mar}[1]{{\marginpar{\sffamily{\scriptsize
        #1}}}}
\newcommand{\mi}[1]{{\mar{MI:#1}}}

\title{Empirical spectral measures of quantum graphs in the Benjamini-Schramm limit}
\author{Nalini Anantharaman, Maxime Ingremeau, Mostafa Sabri, Brian Winn}
\date{}
\address{Universit\'e de Strasbourg, CNRS, IRMA UMR 7501, F-67000 Strasbourg, France.}
\email{anantharaman@math.unistra.fr}
\address{Laboratoire J.A.Dieudonn\'e, UMR CNRS-UNS 7351, Universit\'e C\^ote
  d'Azur, 06000 Nice, France}
\email{maxime.ingremeau@univ-cotedazur.fr}
\address{Department of Mathematics, Faculty of Science, Cairo University, Cairo 12613, Egypt.}
\email{mmsabri@sci.cu.edu.eg}
\address{Department of Mathematical Sciences, Loughborough University, Leicestershire, LE11 3TU, United Kingdom.}
\email{b.winn@lboro.ac.uk}

\begin{abstract}
We introduce the notion of Benjamini-Schramm convergence for quantum graphs. This notion of convergence, intended to play the role of the already existing notion for discrete graphs, means that the restriction of the quantum graph to a randomly chosen ball has a limiting distribution. We prove that any sequence of quantum graphs with uniformly bounded data has a convergent subsequence in this sense. We then consider the empirical spectral measure of a convergent sequence (with general boundary conditions and edge potentials) and show that it converges to the expected spectral measure of the limiting random rooted quantum graph. These results are similar to the discrete case, but the proofs are significantly different.
\end{abstract}

\maketitle

\section{Introduction}

The study of empirical spectral measures (ESM) is a subject of rich
history, going back at least to the works of Wigner in the 50s
\cite{wig:cvo, wig:otd}. The ESM is the normalized count of
eigenvalues in a set. Wigner showed that the ESM of certain random
matrices arising from the study of heavy nuclei converges in
probability to the \emph{semicircular distribution}
$\mu_{\mathrm{sc}} = \frac{1}{2\pi}(4-|x|^2)_+^{1/2}\,\dd x$. This gives a
general idea of how the spectrum of these matrices looks like when the
size of the matrix gets large. Since then, a large body of literature
has been devoted to generalizing this result to other classes of
random matrices, considering spectral windows that shrink with $N$,
and also studying non-Hermitian matrices, so that the spectrum now lies
in $\C$---the ``universal'' analog in this case is the \emph{circular
  law} $\mu_{\mathrm c} = \frac{1}{\pi} \mathbf{1}_{|z|\le 1}$
\cite{gir:cl, bai:cl, tao:rmu}.

In the framework of graphs, a well-known result called the \emph{law of Kesten-McKay} \cite{Ke59,Mc81} says the following: if $G_N=(V_N, E_N)$ is a sequence of $(q+1)$-regular graphs with a negligible number of cycles (which is generically true if the $G_N$ are chosen at random), and if $(\lambda_j^{(N)})$ are the eigenvalues of the adjacency matrix $\cA_{G_N}$, then for any bounded continuous
function $\chi$,
\begin{equation}\label{e:kestenmckay}
  \frac1{|V_N|}   \sum_{i=1}^{|V_N|}\chi(\lambda_i^{(N)})
  \underset{N\To+\infty}{\To} \frac{1}{\pi}\int_{-\infty}^\infty
  \chi(\lambda)\Im \big\{G_{\lambda+\ii0}^{\T_q}(o,o)\big\}\,\dd \lambda
  = \int_{-\infty}^\infty \chi(\lambda)\, \dd\mu_o^{\T_q}(\lambda)\,,
\end{equation}
where $G_z^{\T_q}(o,o)$ is the Green's kernel of the adjacency matrix $\cA_{\T_q}$ of the $(q+1)$-regular tree $\T_q$ at a vertex $o\in \T_q$. More precisely, $G_z^{\T_q}(o,o) = \langle \delta_o, (\cA_{\T_q}-z)^{-1}\delta_o\rangle$ for $z\in \C\setminus \R$, and $G_{\lambda+\ii0}^{\T_q}(o,o)$ is its limit when $z$ approaches $\lambda$ from the upper half-plane. Here $\mu_o^{\T_q}(I) = \langle \delta_o, \mathbf{1}_I(\cA_{\T_q})\delta_o\rangle$ is the \emph{spectral measure} of $\cA_{\T_q}$ at the vertex $o$, and it is in fact independent of the vertex (as $\T_q$ is homogeneous). The density $\Im G_{\lambda+\ii0}^{\T_q}(o,o)$ is given explicitly by
$\frac{(q+1)\sqrt{4q-\lambda^2}}{2\pi[(q+1)^2-\lambda^2]}
\mathbf{1}_{|\lambda|\le 2\sqrt{q}}$.  

A good way to interpret \eqref{e:kestenmckay} is to say that such regular graphs ``converge'' to $\T_q$ due to the assumption of few-cycles, and consequently the ESM of $\cA_{G_N}$ converges to the spectral measure of $\cA_{\T_q}$ at some point $o$. The pertinent notion of convergence of graphs here is the \emph{Benjamini-Schramm} or \emph{local-weak} convergence \cite{BS,AL}, which studies how $k$-balls in $G_N$ look like as $N$ gets large. More precisely, we say that a sequence of finite graphs $(G_N)$ converges if for any finite rooted graph $(F,o)$ and $k\in\N$, the fraction of points $x\in G_N$ such that $B_{G_N}(x,k) \cong (F,o)$ has a limit. In the case of regular graphs with few short cycles, the limit is either zero or one, depending if $(F,o) \cong B_{\T_q}(o,k)$ or not, and we call $\delta_{[\T_q,o]}$ the limit of $(G_N)$. In general one obtains a probability distribution $\rho$ on the set of rooted graphs. The limit is then interpreted as a random rooted graph $\{(G,o)\}$, distributed according to $\rho$.

It turns out that \eqref{e:kestenmckay} is then a special case of a general fact: if $(G_N)$ converges to $\rho$ in the sense of Benjamini-Schramm, then the ESM of $G_N$ converges to the mean spectral measure $\mathbb{E}_{\rho}(\mu_o(\cdot)) = \int \mu_o(\cdot)\,\dd\rho([G,o])$. Here $\mu_o(I) = \langle \delta_o, \mathbf{1}_I(\cA_G)\delta_o\rangle$, and the integral runs over (isomorphism classes of) rooted graphs $(G,o)$. This fact is well-known, see e.g.\ \cite{ATV}. This can be generalized to the case of weighted graphs, with weights on the edges and/or vertices (see e.g. \cite[Appendix A]{AS2}). The adjacency matrix is then replaced by some weighted Schr\"odinger operator.

Hence, if one knows that $(G_N)$ converges to $\{(G,o)\}$, and if one has a good knowledge of the limiting operator $\cA_G$ (as in $\T_q$), then one gets spectral information about $G_N$ for large $N$. But things can also go the other way around. For example, in the theory of random Schr\"odinger operators, the model of interest is usually the infinite one, such as $H_{\Z^d}(\omega) = \cA_{\Z^d} + W^{\omega}$ on $\Z^d$, where $W^{\omega}$ is a random i.i.d.\ potential on the vertices. If $\Lambda_N = \{v\in\Z^d:\|v\|_{\infty}\le N\}$ is a set of cubes, it is known that
\begin{equation}\label{e:densityofstates}
\frac{\# \{\lambda_i^{(N)}(H_{\Lambda_N}(\omega))\in I\}}{|\Lambda_N|} \underset{N\To+\infty}{\To} \mathbf{E}\big[\mu_0^{H_{\Z^d}(\omega)}(I)\big]
\end{equation}
both almost-surely and in expectation, see e.g.\ \cite[Section 5]{Kirsch}. The boldface $\mathbf{E}$ denotes expectation w.r.t.\ the random potential. Though generally not presented this way, this is actually a consequence of the a.s.\ Benjamini-Schramm convergence of such cubes to $(\Z^d,W^{\omega},0)$. The quantity in the RHS is called the \emph{integrated density of states} in this context. Usually, to prove regularity
 of the integrated density of states, one first proves a \emph{Wegner estimate} valid for any finite $N$, then passes to the limit.
Thus in this case, a fact on the infinite graph was deduced by constructing a sequence of graphs $(\Lambda_N,W^{\omega}_N)$ which converges to $(\Z^d,W^{\omega},0)$. 

\medskip

Our aim in this paper is to introduce the notion of Benjamini-Schramm convergence for \emph{quantum graphs}. A quantum graph is a graph $G$ in which each edge is endowed a length $L_e$, and one studies a second-order (Sturm-Liouville type) differential operator on the edges, satisfying appropriate boundary conditions at the vertices \cite{Kuch}. Here we consider Schr\"odinger operators with edge-potentials and general boundary conditions. We give the precise definition of quantum graphs and the corresponding Benjamini-Schramm topology in Sections~\ref{sec:QG} and \ref{sec:BS}, respectively. We summarize our main results as follows and refer the reader to these sections for details:

\begin{main*}
Let $(\cQ_N)$ be a sequence of quantum graphs with uniformly bounded data (degree, length, edge potential, boundary conditions). Denote by $H_{\cQ_N}$ the corresponding Schr\"odinger operator. Then:-
\begin{enumerate}[\rm 1.]
\item Up to passing to a subsequence, the graphs $(\cQ_N)$ converge in the sense of Benjamini-Schramm to some probability measure $\mathbb{P}$ on the set of rooted quantum graphs (modulo isomorphism);
\item The empirical spectral measure $\mu_{\cQ_N} = \frac{1}{\sum_{e\in E_N} L_e} \sum_{\lambda_k^{(N)}\in \sigma(H_{\cQ_N})}\delta_{\lambda_k^{(N)}}$ converges vaguely: for any $\chi\in C_c(\R)$, we have
\begin{equation*}
\lim\limits_{N\rightarrow \infty} \int_\R \chi(\lambda)\, \mathrm{d}\mu_{\cQ_N}(\lambda)=\int_{\mathbf{Q}_*} \chi\big{(}H_\cQ\big{)}(\mathbf{x_0},\mathbf{x_0}) \dd \mathbb{P}(\cQ,\mathbf{x_0})\,
\end{equation*}
where $H_\cQ$ is the Schr\"odinger operator on the limiting random quantum graph $\cQ$, $\chi\big{(}H_\cQ\big{)}(\mathbf{x_0},\mathbf{x_0})$ is the value of the Schwartz kernel of the operator $\chi\big{(}H_\cQ\big{)}$ at the root $\mathbf{x_0}$ and $\mathbf{Q}_*$ is a set of equivalence classes of rooted
quantum graphs (the equivalence relation is specified in Section
\ref{sec:BS}).
\end{enumerate}
\end{main*}

In the framework of discrete graphs with uniformly bounded degrees, the convergence of ESM is an elementary consequence of Benjamini-Schramm convergence \cite{ATV}. In fact, as the measures have compact support, it suffices to establish the convergence of moments. But the $k$-moment of the ESM is just $\frac{1}{N}\sum_{x\in V_N}\langle \delta_x, \cA_{G_N}^k \delta_x\rangle$. The term $\langle \delta_x, \cA_{G_N}^k \delta_x\rangle$ is the number of $k$-paths in $G_N$ from $x$ to $x$, and only depends on a $k$-ball around $x$. The convergence of the $k$-th moment then follows from the convergence of the distribution of $k$-balls.

The proof for quantum graphs is more involved and takes the major part of this paper. We start by noting that since the Green's kernel $y\mapsto G_z(x,y)$ solves the eigenvalue equation, we may expand it in a basis of solutions of $H_{\cQ}f=zf$ on each edge. The coefficients of the expansion are eigenvectors of an \emph{evolution operator}, which is a matrix indexed by the oriented edges of the graphs. Thus, instead of working with the unbounded operator $H_{\cQ}$ (which poses obvious problems if one wants to take powers as in the discrete case), we work with finite matrices indexed by the edges. This allows us to prove the continuity of the Green's kernel in the Benjamini-Schramm topology. From this, we proceed to establish the convergence of the ESM. We do this in two steps, finally obtaining a statement resembling \eqref{e:kestenmckay}. In fact, while the last equality in \eqref{e:kestenmckay} is immediate, making sense of a punctual spectral measure $\mu_{\mathbf{x_0}}^{H_{\cQ}}$ for quantum graphs takes some effort. 

Evolution operators are well-known in the ``scattering approach'' to quantum graphs when there is no edge potential \cite{KoSm99,BeKu13}. The construction has been adapted to include edge potentials with Kirchhoff conditions in \cite{RuSmi12}, then general boundary conditions in \cite{BoEgRu}. We construct our operator a bit differently; for technical reasons, we want it to be sub-unitary in (some large region of) the upper-half plane $\Im z>0$. While this is already true in the framework of \cite{RuSmi12} when $\Re z$ is large enough, it seems some more work has to be done in general, and our construction works only when $\Im z$ is large enough. In particular, we do not obtain a characterization of the spectrum of $H_{\cQ}$ in terms of the evolution operator; for this see \cite{RuSmi12,BoEgRu} instead, when the graph is finite.

The paper is organized as follows. In Section~\ref{sec:QG}, we recall the definition of quantum graphs with edge potentials and general boundary conditions at the vertices. In Section~\ref{sec:BS}, we define the Benjamini-Schramm convergence of quantum graphs and state our main results (Theorems \ref{th:GreenContinuous2}, \ref{th:IntegralKernelsAreCool}, \ref{th:Empirical}). In Section~\ref{sec:Kirch}, we present the main ideas of the proof of Theorem \ref{th:GreenContinuous2} in the special case where there is no potential, and where each vertex has Kirchhoff boundary conditions. In Section~\ref{sec:Evolution}, we construct our evolution operator, and we use it in Section~\ref{sec:proof} to prove continuity of Green's kernel. The convergence of the ESM is then established in Section~\ref{sec:ConvEmp}. We conclude the paper with several examples of Benjamini-Schramm convergence in Section~\ref{sec:exam}. The reader who wants to develop more intuition can skip the proofs and go directly from Section~\ref{sec:BS} to \ref{sec:exam}.

\section{Quantum graphs}\label{sec:QG}
\subsection{Metric graphs and quantum graphs}

Let $G=(V,E)$ be a combinatorial graph with countable vertex set $V$ and edge set $E$. For each vertex $v\in V$, we denote by $d(v)$ the degree of $v$. For simplicity, we assume there is at most one edge between any two vertices, and that there is no edge from a vertex to itself.
If $v_1, v_2\in V$, we write $v_1\sim v_2$ if $\{v_1,v_2\}\in E$.
We let $\mathcal{B}= \mathcal{B}(G)$ be the set of oriented edges (or bonds), so that $|\mathcal{B}|=2|E|$ (an edge $\{v_1,v_2\}\in E$ gives rise to two oriented edges
$(v_1,v_2), (v_2,v_1)\in \mathcal{B}$). If $b\in \mathcal{B}$, we shall denote by $\hat{b}$ the reverse bond. We write $o(b)$ for the origin of $b$ and $t(b)$ for the terminus of $b$. We will also write $e(b)\in E$ for the edge obtained by forgetting the orientation of $b$.

\begin{definition}\label{def:MG}
A \emph{length graph} $(V,E,L)$ is a connected combinatorial graph $(V,E)$ endowed with a map $L: E\rightarrow (0,\infty)$. We denote $L_e:=L(e)$. If $b\in \mathcal{B}$, we also denote $L_b:= L(e(b))$.

The underlying \emph{metric graph} is the set\footnote{Note that we do not include the vertices in $\cG$.}
\[
\cG:= \{ (b, x); b\in \mathcal{B}, x\in (0,L_b)\} \slash \simeq \,,
\]
where the equivalence relation $\simeq$ is described by $(b,x)\simeq (b',x')$ 
if $b'= \hat{b}$ and $x'= L_b-x$.

Points of $\cG$ will be denoted by the shorter boldface $\mathbf{x}$ when there is no confusion.
\end{definition}
Compared to more usual definitions, this one avoids identifying each edge with a segment $(0, L)$, which involves a non-canonical choice of an origin.

The set $\cG$ can be equipped with a measure $\mathrm{d}\mathbf{x}$, as follows:
 if $f: \cG\To \C$, then we can lift it to a map $\tilde{f} : \{ (b, x); b\in \mathcal{B}, x\in (0,L_b)\} $ such that $\tilde{f}(b,x)= \tilde{f}(\hat{b}, L_b-x)$. If $\tilde{f}$ is measurable, we define
\[
\int_{\cG} f(\mathbf{x}) \,\mathrm{d}\mathbf{x}:= \frac{1}{2} \sum_{b\in \mathcal{B}} \int_0^{L_b} \tilde{f}(b,x)\, \mathrm{d}x \,.
\]
The $\frac{1}{2}$ factor comes from the fact that each non-oriented edge is counted twice, once for each orientation. In the sequel, we will denote by $\langle \cdot, \cdot \rangle$ the $L^2$-scalar product associated to this measure.

A function $f:\cG \To \C$ can be written as $f=(f_b)_{b\in \mathcal{B}}$, where $f_b(x) := f(b,x)$ satisfies the condition $f_{\widehat{b}}(L_b-x) = f_b(x)$. This means that the value of $f$ at the point at distance $x$ from $o(b)$ is equal to the value of $f$ at the point at distance $L_b-x$ from $o(\hat{b}) = t(b)$.

A quantum graph is a length graph endowed with a Schr\"odinger operator satisfying boundary conditions at each vertex. More precisely
\begin{definition}\label{def:QG}
A \emph{quantum graph} $\cQ=(V,E,L,W,\beta,U)$ is the data of:
\begin{itemize}
\item A length graph $(V,E,L)$,
\item A potential $W=(W_b)_{b\in \mathcal{B}} \in \bigoplus_{b\in \mathcal{B}} C^0([0,L_b]; \R)$ satisfying for $x\in [0,L_b]$,
\begin{equation}\label{eq:ReversePot}
W_{\widehat{b}}(L_b-x) = W_b(x)\,.
\end{equation}
\item For each $v\in V$, a labelling of the oriented edges starting at $v$, that is, a one-to-one map
\[
\beta^v : \{1,\ldots,d(v)\}\To \left\{b\in \mathcal{B} \,|\, o(b)=v\right\}.
\]
\item For each $v\in V$, a unitary matrix $U_v\in \mathcal{M}_{d(v)}(\C)$.
\end{itemize}
\end{definition}
Condition \eqref{eq:ReversePot} simply means that $W$ is well-defined on the quotient $\cG$. It does not impose that $W_b$ is symmetric (a symmetric potential would satisfy moreover $W_b(L_b-x)=W_b(x)$).

The unitary matrix $U_v$ will encode the boundary condition at $v$.

In the sequel, we will always make the following assumption
\begin{hyp}
\[
d(G):= \sup_{v\in V} d(v)<\infty
\]
\[
\underline{L}(\cQ):= \inf_{e\in E} L(e)>0
\]
\[
\overline{L}(\cQ):= \sup_{e\in E} L(e)<\infty
\]
\[
\|W\|_\infty:= \sup_{b\in \mathcal{B}} \|W_b\|_{C^0} < \infty
\]
\begin{equation}\label{e:robinpart}
\sup_{v\in V} \|\Lambda_v\| <\infty\,,
\end{equation}
where $\Lambda_v$ is defined as follows: let $P^{\mathrm{R}}_v$ be the orthogonal projection onto the eigenspace of all eigenvalues of $U_v$ other than $\pm 1$. Let $(U_v\pm \id)_{\mathrm{R}}$ be the restriction of $(U_v\pm \id)$ to $P^{\mathrm{R}}_v\C^{d(v)}$. Then $\Lambda_v : P^{\mathrm{R}}_v\C^{d(v)} \to P^{\mathrm{R}}_v\C^{d(v)}$ is the Cayley transform $\Lambda_v = -\ii (U_v+\id)_{\mathrm{R}}^{-1}(U_v-\id)_{\mathrm{R}}$.
\end{hyp}

The last condition may seem unclear, but it is actually quite natural, as we'll see in Examples~\ref{subsec:exam}.

Hypothesis 1 is automatically satisfied for finite quantum graphs:

\begin{definition}
A quantum graph $\cQ=(V,E,L,W,\beta,U)$ will be called \emph{finite} if $|V|, |E|<\infty$. Its \emph{total length} is then defined as 
\[
\mathcal{L}(\cQ)= \sum_{e\in E} L(e)\,.
\]
\end{definition}

\subsection{The Schr\"odinger operator on a quantum graph}
Let $\cQ=(V,E,L,W,\beta,U)$ be a quantum graph as in Definition \ref{def:QG}. Consider the Hilbert space\footnote{In the sequel, all the scalar products in Hilbert spaces will be linear in the right variable, and anti-linear in the left one: $\langle z u , z' v \rangle = \overline{z} z' \langle u, v \rangle$.}
\begin{equation*}
\mathscr{H}:=\Big{\{}f= (f_b)_{b\in \mathcal{B}} \in \mathop\bigoplus_{b\in \mathcal{B}} L^2(0,L_b) \,\Big{|}\, f_{\widehat{b}}(L_b-\cdot) = f_{b} (\cdot) \text{ and } \sum_{b\in \mathcal{B}} \|f_b\|^2_{L^2(0,L_b)}<\infty \Big{\}}
\end{equation*}
 and its subset 
\[
\mathscr{H}^0:=\Big{\{}f= (f_b)_{b\in \mathcal{B}} \in \mathop\bigoplus_{b\in \mathcal{B}} H^2(0,L_b) \,\Big{|}\, f_{\widehat{b}}(L_b-\cdot) = f_{b} (\cdot) \text{ and } \sum_{b\in \mathcal{B}} \|f_b\|^2_{H^2(0,L_b)}<\infty \Big{\}}\subset \mathscr{H}\,.
\]
Note that $\mathscr{H}$ can be identified with the space $L^2(\cG)$ of functions $f : \cG \To \C$ which are square-integrable for the measure $\mathrm{d}\mathbf{x}$.

We define an operator $H_\cQ$ acting on $\psi= (\psi_b)_{b\in \mathcal{B}}\in  \mathscr{H}^0$ by 
\begin{equation}\label{e:H}
(H_{\cQ}\psi_b)(x) = -\psi''_b(x) + W_b(x)\psi_b(x).
\end{equation}
Note that \eqref{eq:ReversePot} implies that $H_\cQ : \mathscr{H}^0\To \mathscr{H}$.

Thus defined, $H_{\cQ}$ is not essentially self-adjoint: we need to impose suitable boundary conditions at each vertex. Given $v\in V$, $f\in \mathscr{H}^0$ and $U_v$, we define 
\begin{equation}\label{eq:defF}
F(v)\in \C^{d(v)}:=\big{(} f(o_{\beta^v(j)})\big{)}_{j=1}^{d(v)}\,,\qquad F'(v)\in \C^{d(v)}:=\big{(} f'(o_{\beta^v(j)})\big{)}_{j=1}^{d(v)}\,,
\end{equation}

\begin{equation}\label{e:a1a2}
A_1(v) = \ii(U_v-\id) \,, \qquad A_2(v) = U_v+\id \,.
\end{equation}

We then define the space 
\begin{equation}\label{e:HQ}
\mathscr{H}^{\cQ}:=\Big{\{}f\in \mathscr{H}^0 \,\big{|}\, \forall v\in V:  A_1(v) F(v) + A_2(v) F'(v)=0\Big{\}}\subset \mathscr{H}.
\end{equation}
It follow from \cite[Theorem 1.4.4]{BeKu13} that if $\cQ$ is finite, then $H_\cQ : \mathscr{H}^{\cQ} \To \mathscr{H}$ is self-adjoint. Moreover, any local self-adjoint boundary condition is of this form for some set of unitary matrices $(U_v)_{v\in V}$. The same holds for infinite graphs satisfying Hypothesis 1, see \cite[Section 1.4.5]{BeKu13}.

\subsection{Examples}\label{subsec:exam}
\begin{enumerate}[\rm (a)]
\item The most common choice of boundary conditions are the  \emph{Kirchhoff boundary conditions}, which correspond to $U_v = \frac{2}{d(v)} \mathbbm{1} - \id$, where $\mathbbm{1}$ is the matrix with all entries equal to $1$. In this case,
\begin{equation}\label{e:a1a2kir}
A_1(v)F(v)+A_2(v)F'(v) = \begin{pmatrix} 2\ii\big(\frac{1}{d}\sum_{j=1}^df(o_{\beta(j)})-f(o_{\beta(1)})\big) + \frac{2}{d}\sum_{j=1}^df'(o_{\beta(j)})\\ \vdots\\2\ii\big(\frac{1}{d}\sum_{j=1}^df(o_{\beta(j)})-f(o_{\beta(d)})\big) + \frac{2}{d}\sum_{j=1}^df'(o_{\beta(j)})\end{pmatrix} \,.
\end{equation}
Hence, \eqref{e:HQ} implies $\frac{2}{d}\sum_{j=1}^df'(o_{\beta(j)})=2\ii(f(o_{\beta(k)})-\frac{1}{d}\sum_{j=1}^df(o_{\beta(j)}))$ for each $k$, which implies all $f(o_{\beta(k)})$ must be equal and thus $\sum_{j=1}^df'(o_{\beta(j)})=0$. In other words, we have:

\smallskip

\begin{itemize}
\item \textbf{Continuity}: For all $b,b'\in \mathcal{B}$, we have $f_b(0) = f_{b'}(0) =:f(v)$ if $o(b)=o(b')=v$.
\item \textbf{Current conservation}: For all $v\in V$, 
\begin{equation}\label{e:current}
\sum_{b:o(b)=v} f_b'(0)= 0\,.
\end{equation}
\end{itemize}
\item One may take $U_v = -\id$ and $U_v = \id$ to obtain Dirichlet and Neumann conditions, respectively, at each vertex $v$. These conditions are rarely interesting as they decouple the graph into a direct sum of intervals.
\item In general, let $P_v^{\mathrm{D}}$ and $P_v^{\mathrm{N}}$ be the orthogonal projections onto the eigenspaces of $U_v$ for the eigenvalues $-1$ and $+1$, respectively, and $P_v^{\mathrm{R}} = \id-P_v^{\mathrm{D}}-P_v^{\mathrm{N}}$. Then \cite{Kuch} the quadratic form of $H_{\cQ}$ is given by
\[
h_{\cQ}[f,f] = \frac{1}{2}\sum_{b\in \mathcal{B}} \|f_b'\|^2 - \sum_{v\in V} \langle \Lambda_v F, F\rangle\,,
\]
where $\Lambda_v:P_v^{\mathrm{R}}\C^{d(v)}\To P_v^{\mathrm{R}}\C^{d(v)}$ is the invertible self-adjoint operator defined in \eqref{e:robinpart}. The domain $D(h_{\cQ})$ is the set of $(f_b)\in\bigoplus H^1(0,L_b)$ such that $P_v^{\mathrm{D}} F = 0$.

For example, if we replace \eqref{e:current} by $\sum_{b:o(b)=v} f_b'(0)=\alpha_v f(v)$, (often called the \emph{$\delta$-condition}) then $\langle \Lambda_v F, F\rangle = -\alpha_v |f(v)|^2$. In general, \eqref{e:robinpart} asks that the \emph{Robin part} $\sup_v \| \Lambda_v\|  $ is bounded, in other words, $\sup_v |\alpha_v|<\infty$ for $\delta$-conditions.
\end{enumerate}

Note that Kirchhoff (and more generally $\delta$-) boundary conditions are homogeneous around each vertex, so the data $\beta$ is not needed to describe the Schr\"odinger operator in this case.

\section{Benjamini-Schramm convergence for quantum graphs}\label{sec:BS}
\subsection{Convergence of graphs}
If $G=(V,E)$ is a connected combinatorial graph, we define for every $v\in V$ and $r\in \N$ the ball $B_G(v,r):= \{w\in V: d_G(v,w)\leq r\}$. 
Here, $d_G$ is the distance on the discrete graph $G$.
We write $E_G(v,r)$ and $\cB_G(v,r)$ for the sets of edges and oriented edges, respectively, connecting two vertices in $B_G(v,r)$.

\begin{definition}
A \emph{rooted quantum graph} $\mathrm{Q}= (V,E,L,W,\beta,U,\mathbf{x_0})$ is a quantum graph $\cQ= (V,E,L,W,\beta,U)$ together with a marked point $\mathbf{x_0}\in \cG$, called the root. We often denote $\mathrm{Q}=(\cQ,\mathbf{x_0})$.
\end{definition}

Given a rooted quantum graph $\mathrm{Q}=(\cQ,\mathbf{x_0})$, we may build from it a new quantum graph, by adding  at $\mathbf{x_0}$
 a new vertex $v_{\mathbf{x_0}}$ with Kirchhoff's boundary conditions. More precisely, we introduce the following definition.

\begin{definition}\label{def:adding}
\emph{(Adding a vertex.)}
Let $\mathrm{Q}=(\cQ,\mathbf{x_0})$ be a rooted quantum graph. We denote by $\cQ^{\mathbf{x_0}}$ the quantum graph such that
\[
\cQ^{\mathbf{x_0}}:=(V^{\mathbf{x_0}},E^{\mathbf{x_0}},L^{\mathbf{x_0}},W^{\mathbf{x_0}},\beta^{\mathbf{x_0}},U^{\mathbf{x_0}}) \,,
\]
where
\begin{itemize}
\item $V^{\mathbf{x_0}}:= V \sqcup \{v_{\mathbf{x_0}}\}$.
\item $E^{\mathbf{x_0}}:= E\setminus \{e(b_0)\} \cup \{\{o(b_0), v_{\mathbf{x_0}}\}, \{t(b_0), v_{\mathbf{x_0}}\}\}$.
\item $L^{\mathbf{x_0}}(e)= L(e)$ for $e\in  E\setminus \{e(b_0)\}$, $L^{\mathbf{x_0}}(\{o(b_0), v_{\mathbf{x_0}}\}) = x_0$ and $L^{\mathbf{x_0}}(\{t(b_0), v_{\mathbf{x_0}}\})= L_{b_0} - x_0$.
\item $W^{\mathbf{x_0}}_b = W_b$ $\ \forall b\in \mathcal{B}\setminus \{b_0, \widehat{b_0}\}$, 
while $W^{\mathbf{x_0}}_{(o(b_0), v_{\mathbf{x_0}})}= (W_{b_0})|_{[0,x_0]} $, 
$W^{\mathbf{x_0}}_{(v_{\mathbf{x_0}},o(b_0))}=(W_{\widehat{b_0}})|_{[x_0, L_{b_0}]}$,
 $W^{\mathbf{x_0}}_{(v_{\mathbf{x_0}}, t(b_0))}= (W_{b_0})|_{[x_0,L_{b_0}]} $,
 and $W^{\mathbf{x_0}}_{(t(b_0),v_{\mathbf{x_0}})}=(W_{\widehat{b_0}})|_{[0,x_0]}$.
\item $(\beta^{\mathbf{x_0}})^v= \beta^v$ for all $v\in V $, $(\beta^{\mathbf{x_0}})^{v_{\mathbf{x_0}}}(1)=(v_{\mathbf{x_0}},o(b_0))$, $(\beta^{\mathbf{x_0}})^{v_{\mathbf{x_0}}}(2)=(v_{\mathbf{x_0}},t(b_0))$.
\item $U^{\mathbf{x_0}}_v= U_v$ for all $v\in V $ and $U^\mathbf{x_0}_{v_{\mathbf{x_0}}}= \begin{pmatrix}
0 & 1 \\
1 & 0
\end{pmatrix}$. 
\end{itemize}
We denote by $G^{\mathbf{x_0}}=(V^{\mathbf{x_0}},E^{\mathbf{x_0}} ) $ the new combinatorial graph and $ \cB^{\mathbf{x_0}}$ the corresponding set of bonds.
\end{definition}

If $(G,v)$ and $(G',v')$ are two rooted discrete graphs, we denote
\[
\phi : (G,v) \xrightarrow{\sim} (G',v')
\]
if $\phi$ is a graph isomorphism satisfying $\phi(v)=v'$. 

Let $\mathrm{Q}=(V,E,L,W, \beta, U,\mathbf{x_0})$ and $\mathrm{Q}'=(V',E',L',W', \beta', U',\mathbf{x_0'})$ be two rooted quantum graphs. We define a pseudometric\footnote{Recall that a pseudometric satisfies all the properties of a distance, except that we don't have $d(\mathrm{Q},\mathrm{Q}')=0 \Longrightarrow \mathrm{Q}=\mathrm{Q}'$.} between them as follows:
\begin{equation}\label{eq:defdistance}
d\left( (\cQ,\mathbf{x_0}),(\cQ',\mathbf{x_0'})\right) = \frac{1}{1+\alpha},
\end{equation}
where
\[
\begin{aligned}
\alpha:= \sup\Big\{ r>0\ \big| \ &\exists \phi : B_{G^{\mathbf{x_0}}}(v_{\mathbf{x_0}}, \lfloor r\rfloor) \xrightarrow{\sim} B_{G'^{\mathbf{x'_0}}}(v_{\mathbf{x'_0}}, \lfloor r\rfloor) \text{ with } \phi\circ \beta^{\mathbf{x_0}} = \beta'^{\mathbf{x'_0}}  \\
& \text{and } \delta_{\lfloor r\rfloor ,\phi}((L^{\mathbf{x_0}},W^{\mathbf{x_0}},U^{\mathbf{x_0}}),(L'^{\mathbf{x'_0}},W'^{\mathbf{x'_0}},U'^{\mathbf{x'_0}}))<1/r\Big\} 
\end{aligned}
\]
and $\delta_{k,\phi}(\cdot,\cdot)$ is the distance between the data in a $k$-ball around the root:
\begin{multline*}
\delta_{k,\phi}((L^{\mathbf{x_0}},W^{\mathbf{x_0}},U^{\mathbf{x_0}}),(L'^{\mathbf{x'_0}},W'^{\mathbf{x'_0}},U'^{\mathbf{x'_0}})) = \max\bigg\{\max_{e\in E_{G^{\mathbf{x_0}}}(v_{\mathbf{x_0}}, k)}\big|L^{\mathbf{x_0}}_e - L'^{\mathbf{x_0'}}_{\phi(e)}\big|,\\
\max_{b \in \cB_{G^{\mathbf{x_0}}}(v_{\mathbf{x_0}}, k)}\sup_{t\in [0,1]}\left|W^{\mathbf{x_0}}_b \left(tL^{\mathbf{x_0}}_b\right)-W'^{\mathbf{x'_0}}_{\phi(b)}\left( tL'^{\mathbf{x'_0}}_{\phi(b)}\right)\right|,\max_{v\in B_{G^{\mathbf{x_0}}}(v_{\mathbf{x_0}},k)}\big\| U^{\mathbf{x_0}}_v-U'^{\mathbf{x'_0}}_{\phi(v)}\big\|\bigg\}.
\end{multline*}

\begin{definition}\label{def:equiv}
Let $\mathrm{Q}=(V,E,L,W, \beta, U,\mathbf{x_0})$ and $\mathrm{Q}'=(V',E',L',W', \beta', U',\mathbf{x_0'})$ be rooted quantum graphs. We say that $\mathrm{Q}$ and $\mathrm{Q}'$ are \emph{equivalent} if $\exists \phi: (G^{\mathbf{x_0}},v_{\mathbf{x_0}}) \xrightarrow{\sim} (G'^{\mathbf{x_0}}, v_{\mathbf{x'_0}})$ satisfying $L^{\mathbf{x_0}}= L'^{\mathbf{x_0}} \circ \phi$, $W^{\mathbf{x_0}} = W'^{\mathbf{x_0'}}\circ \phi$, $U^{\mathbf{x_0}}=U'^{\mathbf{x_0'}}\circ \phi$ and $\phi \circ \beta^{\mathbf{x_0}} = \beta'^{\mathbf{x_0'}}$.

We denote the equivalence class of $\mathrm{Q}$ by $[\mathrm{Q}]$.
\end{definition}

Let $\mathbf{Q}_*$ be the set of equivalence classes of rooted quantum graphs. 
\begin{lemme}
$(\mathbf{Q}_*,d)$ is a Polish space, i.e., a separable complete metric space.
\end{lemme}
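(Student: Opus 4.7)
The plan is to verify separately that $d$ descends to a genuine metric on $\mathbf{Q}_*$, and that the resulting space is both separable and complete.

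Symmetry of $d$ is immediate, and the triangle inequality follows formally by composing ball-isomorphisms (with the error parameter taken as the maximum of the two); the real content of the metric axioms is the separation property. If $d(\mathrm{Q},\mathrm{Q}')=0$, then for every $r\in\N$ the definition yields an isomorphism $\phi_r : B_{G^{\mathbf{x_0}}}(v_{\mathbf{x_0}}, r) \to B_{G'^{\mathbf{x_0'}}}(v_{\mathbf{x_0'}}, r)$ respecting labelings, with length, potential and unitary data agreeing up to error $1/r$. Because each rooted ball is locally finite, only finitely many labeled isomorphisms exist between two fixed combinatorial $k$-balls; a K\H{o}nig/diagonal extraction then selects a subsequence of $(\phi_r)$ that agrees on the $1$-ball, is refined to agree on the $2$-ball, and so on. The resulting tower glues to a global isomorphism $\phi$ of $G^{\mathbf{x_0}}$ onto $G'^{\mathbf{x_0'}}$ under which, by continuity of the potentials and agreement of all other data to within $1/r$ for arbitrary $r$, the full data coincide. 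This is precisely the equivalence of Definition~\ref{def:equiv}.

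For separability, I would construct an explicit countable dense subset $\mathcal{D}\subset\mathbf{Q}_*$. The collection of finite rooted combinatorial graphs with prescribed vertex labelings is countable; to each I would attach rational edge-lengths, potentials represented by polynomials with rational coefficients (symmetrized so that \eqref{eq:ReversePot} holds), unitary matrices drawn from a fixed countable dense subset of each $U(d)$, and a root placed at a rational point along one of the edges. Given any rooted quantum graph $\mathrm{Q}$ and any $r\in\N$, approximating all data on $B_{G^{\mathbf{x_0}}}(v_{\mathbf{x_0}}, r)$ to within $1/r$ by such rational counterparts, and extending outside that ball by any fixed infinite branch (e.g.\ a single pendant edge with a Dirichlet endpoint), produces an element of $\mathcal{D}$ within distance $1/r$ of $\mathrm{Q}$.

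Completeness is the heart of the matter. Given a Cauchy sequence $([\mathrm{Q}_n])$, for each $r$ eventually $d([\mathrm{Q}_n],[\mathrm{Q}_m])<1/r$, so the labeled combinatorial $r$-balls about the roots are all isomorphic. I would pass to a subsequence along which, again by finiteness of labeled isomorphisms between fixed balls, the isomorphisms are chosen coherently: the iso on $(r+1)$-balls restricts to the iso on $r$-balls. Along this subsequence the length, potential, and unitary data transported to a common reference $r$-ball form Cauchy sequences in $(0,\infty)$, in $C^0$ (after reparametrizing each edge to $[0,1]$), and in $U(d)$, all of which are complete, so they converge. Gluing the limits across $r$ produces a rooted quantum graph $\mathrm{Q}$ to which the subsequence converges in $d$; the original Cauchy condition upgrades this to full convergence. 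The main obstacle throughout is precisely this coherent refinement of isomorphisms across scales — in both the separation and completeness arguments, one must bookkeep the subsequences carefully so that the iso chosen at radius $r+1$ restricts to the one already fixed at radius $r$. Local finiteness of each rooted ball makes this recursive diagonal procedure terminate at each stage and delivers a genuine limit.
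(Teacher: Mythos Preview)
Your proposal is correct and follows essentially the same route as the paper: for separation you extract a coherent tower of ball-isomorphisms via local finiteness (the paper does the same diagonal argument), for separability you use finite combinatorial models together with separability of the data spaces $\R$, $C^0$, and $U(d)$, and for completeness you stabilize the combinatorial balls and pass data to limits using completeness of those same spaces. The only cosmetic differences are your explicit choice of rational/polynomial data and the pendant-edge extension, where the paper simply invokes the countable set of finite rooted graphs.
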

\begin{proof}
Clearly if $[\mathrm{Q}_1]=[\mathrm{Q}'_1]$ and $[\mathrm{Q}_2]=[\mathrm{Q}'_2]$, then $d(\mathrm{Q}_1, \mathrm{Q}_2)= d(\mathrm{Q}'_1, \mathrm{Q}'_2)$, so $d$ is well-defined on $\mathbf{Q}_*$.

If $\mathrm{Q}$, $\mathrm{Q}'$ are rooted quantum graphs with $d(\mathrm{Q}, \mathrm{Q}')=0$, then for any $r\in \N$, $\exists \phi_r: B_{G^{\mathbf{x_0}}}(v_{\mathbf{x_0}}, r) \xrightarrow{\sim} B_{G'^{\mathbf{x_0'}}}(v_{\mathbf{x_0'}}, r)$ such that $\phi_r\circ \beta^{\mathbf{x_0}}= \beta'^{\mathbf{x'_0}}$ and $\delta_{\lfloor r\rfloor ,\phi}((L^{\mathbf{x_0}},W^{\mathbf{x_0}},U^{\mathbf{x_0}}),(L'^{\mathbf{x_0'}},W'^{\mathbf{x_0'}},U'^{\mathbf{x_0'}}))<1/r$.

Let $\phi_r^{(n)} = \phi_r|_{B_{G^{\mathbf{x_0}}}(v_{\mathbf{x_0}},n)}$ for $r\ge n$. One checks that $\phi_r^{(n)}:B_{G^{\mathbf{x_0}}}(v_{\mathbf{x_0}},n) \xrightarrow{\sim} B_{G'^{\mathbf{x_0'}}}(v_{\mathbf{x'_0}},n)$ for all $r \ge n$. Since $G$ and $G'$ are locally finite, $\phi_r^{(n)}$ has a convergent (in fact stationary) subsequence $\phi_{r_j}^{(n)}$. Denote its limit by $\phi^{(n)}$. Then $\phi^{(n)} :B_{G^{\mathbf{x_0}}}(v_{\mathbf{x_0}},n) \xrightarrow{\sim}B_{G'^{\mathbf{x_0}}}(v_{\mathbf{x'_0}},n)$.

The advantage of $\phi^{(n)}$ over $\phi_n$ is that $\phi^{(n+1)}|_{B_{G^{\mathbf{x_0}}}(v_{\mathbf{x_0}},n)} = \lim \phi_{r_j}^{(n+1)}|_{B_{G^{\mathbf{x_0}}}(v_{\mathbf{x_0}},n)} = \lim \phi_{r_j}^{(n)} = \phi^{(n)}$. So $\phi^{(m)}|_{B_{G^{\mathbf{x_0}}}(v_{\mathbf{x_0}},n)} = \phi^{(n)}$ for all $m \ge n$. Hence, if for $v\in G^{\mathbf{x_0}}$, say $v\in B_{G^{\mathbf{x_0}}}(v_{\mathbf{x_0}},n)$ for some $n$, we put $\phi(v) := \phi^{(n)}(v)$, then $\phi$ is a well-defined graph isomorphism $\phi:(G^{\mathbf{x_0}},v_{\mathbf{x_0}})\xrightarrow{\sim}(G'^{\mathbf{x'_0}},v_{\mathbf{x'_0}})$. Moreover, for any $n$ and $r \ge n$, we have $\|L'^{\mathbf{x'_0}} \circ \phi_r^{(n)}-L^{\mathbf{x_0}}\|_{E_{G{\mathbf{x_0}}}(v_{\mathbf{x_0}},n)}<1/r$, so $\|L'^{\mathbf{x_0}}\circ \phi^{(n)}-L^{\mathbf{x_0}}\|_{E_{G^{\mathbf{x_0}}}(v_{\mathbf{x_0}},n)} = 0$. This shows that $L'^{\mathbf{x_0'}}\circ \phi = L^{\mathbf{x_0}}$. Arguing similarly for $W^{\mathbf{x_0}},U^{\mathbf{x_0}}$, we get $[Q]=[Q']$ as required.

We showed that $d$ is a distance on $\mathbf{Q}_*$. Separability follows by considering the countable set of finite rooted graphs and using that $\R$, $C^0([0,L])$ and $U_d(\C)$, the set of unitary matrices, are separable for every $L$ and $d$. To prove completeness, note that if $[\cQ_n,\mathbf{x}_n]$ is Cauchy, then the (equivalence classes of) balls $[B_{G_n^{\mathbf{x}_n}}(v_{\mathbf{x}_n},\lfloor r\rfloor)]$ are stationary in $n$ for any $r$. Say it stabilizes for $n\ge n_r$. We may then define a limit discrete rooted graph $(G_{\star}, v_\star)$ iteratively such that for any $r$, $[B_{G_\star}(v_\star,\lfloor r\rfloor)] = [B_{G^{\mathbf{x}_{n_r}}_{n_r}}(v_{\mathbf{x}_{n_r}},\lfloor r\rfloor)]$. Using the completeness of $\R$, $C^0([0,L])$ and $U_d(\C)$, we may find a limiting data $(L,W,U)$ on $G_{\star}$, completing the proof.
\end{proof}

Let $\cP(\mathbf{Q}_*)$ be the set of probability measures on $\mathbf{Q}_*$.

\begin{definition}
Any finite quantum graph $\cQ=(V,E,L,W, \beta, U)$ defines a probability measure $\nu_{\cQ}$ obtained by choosing a root uniformly at random: 
\[
\nu_{\cQ}:= \frac{1}{ 2 \mathcal{L}(\cQ)} \sum_{b_0\in \mathcal{B}} \int_0^{L_{b_0}} \delta_{[\cQ,(b_0,x_0)]}\mathrm{d}x_0 = \frac{1}{ \mathcal{L}(\cQ)}  \int_{\cG} \delta_{[\cQ,\mathbf{x_0}]}\mathrm{d}\mathbf{x_0}.
\]

If $\cQ_N$ is a sequence of quantum graphs, we say that $\mathbb{P}\in \mathcal{P}(\mathbf{Q}_*)$ is the \emph{local weak limit} of $\cQ_N$, or that $\cQ_N$ \emph{converges in the sense of Benjamini-Schramm to $\mathbb{P}$}, if $\nu_{\cQ_N}$ converges weakly-* to $\mathbb{P}$.
\end{definition}

Let $D\in \N$, $M>m>0$. We define $\mathbf{Q}_*^{D,m,M}\subset \mathbf{Q}_*$ as the subset of equivalence classes $[\cQ,\mathbf{x_0}]=[V,E,L,W, \beta, U,\mathbf{x_0}]$ which satisfy
\begin{equation}\label{eq:ConditionCompQG}
\begin{cases}
\qquad\qquad\qquad\qquad\quad d(G)\leq D\\
\qquad\qquad\qquad\quad m\le \underline{L}(\cQ)\leq \overline{L}(\cQ) \le M\\
\qquad\qquad\qquad\quad \sup_{v\in V} \|\Lambda_v\|  \le M \\
 W_b\in \Lip([0,L_b]) \text{ and }  \max\left(\| W_b\|_{\infty},\Lip(W_b)\right) \le M \quad \forall b\in \mathcal{B}\,.
\end{cases}
\end{equation}
Here $\Lip(I)$ is the set of Lipschitz-continuous functions on $I$ and $\Lip(f)$ is the Lipschitz constant of $f$. If $\delta>0$, we also define 
\[
\mathbf{Q}_*^{D,m,M,\delta}:= \left\{ [\cQ, (b_0, x_0)]\in \mathbf{Q}_*^{D,m,M} \ | \  x_0\in [\delta, L_{b_0}-\delta]\right\}.
\]

\begin{lemme}\label{lem:QCompact}
The subset $\mathbf{Q}_*^{D,m,M,\delta}$ is compact.
\end{lemme}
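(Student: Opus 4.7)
The plan is to establish sequential compactness, which suffices since $(\mathbf{Q}_*,d)$ is a Polish space. Given a sequence $([\cQ_n,\mathbf{x}_n])_{n\in\N}$ in $\mathbf{Q}_*^{D,m,M,\delta}$, the goal is to extract a subsequence converging to some limit $[\cQ^*,\mathbf{x}_0^*]$ still in $\mathbf{Q}_*^{D,m,M,\delta}$.

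First I would fix $r\in\N$ and control the $r$-ball. Since the degree of $G_n^{\mathbf{x}_n}$ is bounded by $\max(D,2)$ (the added vertex $v_{\mathbf{x}_n}$ has degree $2$), the combinatorial ball $B_{G_n^{\mathbf{x}_n}}(v_{\mathbf{x}_n},r)$ has at most $1+D+\cdots+D^r$ vertices, and only finitely many combinatorial isomorphism classes of such labeled rooted balls occur; by pigeonhole I extract a subsequence along which this combinatorial structure (including the labeling $\beta$) is constant up to isomorphism. On the stabilized $r$-ball I extract further subsequences so that the attached data converge: edge lengths lie in $[\min(m,\delta),M]$ (the two new edges have lengths $x_0^{(n)}\ge\delta$ and $L_{b_0^{(n)}}^{(n)}-x_0^{(n)}\ge\delta$), so Bolzano--Weierstrass yields convergence; the rescaled potentials $t\mapsto W_b^{(n)}(tL_b^{(n)})$ on $[0,1]$ are uniformly bounded by $M$ and uniformly Lipschitz (with constant at most $M^2$), so Arzelà--Ascoli produces a uniformly convergent subsequence; and the vertex unitaries lie in $\mathcal{U}_{d,M}:=\{U\in U_d(\C):\|\Lambda(U)\|\le M\}$, which I will show is compact.

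Applying this for each $r=1,2,\ldots$ and diagonalizing, I obtain a single subsequence along which the data on every ball around the root converges. Stitching together the compatible limits produces a rooted quantum graph $(\cQ^*,\mathbf{x}_0^*)$, and all defining bounds of $\mathbf{Q}_*^{D,m,M,\delta}$ pass to the limit by closedness: the sup and Lipschitz bounds on $W$ are preserved under uniform convergence; the length bounds $m\le L^*_e\le M$ and the root condition $x_0^*\in[\delta,L^*_{b_0^*}-\delta]$ are closed constraints; and the Robin bound comes from the compactness of $\mathcal{U}_{d,M}$. Convergence in the metric $d$ is then immediate from the construction: for each $r$, the extracted $r$-ball is isomorphic to that of the limit for large $n$, with data converging uniformly.

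The main obstacle is verifying the compactness of $\mathcal{U}_{d,M}$ for each $d\le D$. Since $U_d(\C)$ is itself compact, closedness suffices. If $U$ has eigenangles $\{\theta_j\}_{j=1}^d$, then $\|\Lambda(U)\|=\max\{|\tan(\theta_j/2)| : \theta_j\not\equiv 0,\pi\bmod 2\pi\}$, the maximum taken over the Robin eigenangles. Given $U_n\to U$ with $\|\Lambda(U_n)\|\le M$, continuity of the spectrum allows a labeling so that $\theta_j^{(n)}\to\theta_j$ for each $j$. For any $j$ with $\theta_j\not\equiv 0,\pi\bmod 2\pi$ (a Robin eigenangle of $U$), the angles $\theta_j^{(n)}$ stay bounded away from $\{0,\pi\}$ for large $n$, so $j$ is Robin for $U_n$ as well; the inequality $|\tan(\theta_j^{(n)}/2)|\le M$ then passes to the limit to yield $|\tan(\theta_j/2)|\le M$. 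Taking the maximum gives $\|\Lambda(U)\|\le M$, establishing the closedness. Conceptually, eigenvalues of $U_n$ approaching $+1$ are harmlessly absorbed into the Neumann subspace of $U$, and the bound $M$ prevents any approach to $-1$.
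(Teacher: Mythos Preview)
Your proof is correct and follows essentially the same route as the paper's: stabilize the combinatorial $r$-ball by pigeonhole (bounded degree), extract convergent data via Bolzano--Weierstrass, Arzel\`a--Ascoli, and compactness of the unitary group, then diagonalize over $r$ and check that the defining constraints are closed. Your explicit verification that $\mathcal{U}_{d,M}=\{U\in U_d(\C):\|\Lambda(U)\|\le M\}$ is closed, via the eigenangle formula $\|\Lambda(U)\|=\max_j|\tan(\theta_j/2)|$, is more careful than the paper, which simply asserts that ``the conditions \eqref{eq:ConditionCompQG} pass to the limit''.
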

\begin{proof}
Let $[V_N,E_N,L_N,W_N, \beta_N, U_N,\mathbf{x}_N]$ be a sequence in $\mathbf{Q}_*^{D,m,M,\delta}$.  Let $r\in \N$. Up to extracting a subsequence, we may suppose that the $B_{G_N^{\mathbf{x}_N}}(v_{\mathbf{x}_N},r)$ are all isomorphic to each other, since there are only finitely many isomorphy classes of balls of radius $r$ with degree $\leq D$.
We may extract a subsequence such that for every $v\in B_{G_N^{\mathbf{x}_N}}(v_{\mathbf{x}_N},r)$, for every $e\in E^{\mathbf{x}_N}_{G_N^{\mathbf{x}_N}}(v_{\mathbf{x}_N},r)$ and for every $b\in \cB_{G_N^{\mathbf{x}_N}}(v_{\mathbf{x}_N},r)$, the sequences $(L^{\mathbf{x}_N}_N(e))$, $((W^{\mathbf{x}_N}_N)_b)$ and $((U^{\mathbf{x}_N}_N)_v)$ are convergent. Here, we used Arzela-Ascoli's theorem for the $((W^{\mathbf{x}_N}_N)_b)$ along with the compactness of unitary matrices. For large $N$, we get $x_N \in [\delta, L_b]$, which has a subsequence converging to $x_{\star}$. Then $x_N-L_{b_N} \to x_{\star} - L_b$. But $x_N-L_{b_N}\le -\delta$ for all $N$, so $x_{\star} \in [\delta,L_b-\delta]$ as required.

We may then conclude by a diagonal extraction argument for the different $r\in \N$. In fact, the diagonal sequence is clearly Cauchy and thus converges. The conditions \eqref{eq:ConditionCompQG} pass to the limit, so the limit quantum graph lies in $\mathbf{Q}_*^{D,m,M,\delta}$. 
\end{proof}

Note that $\nu_{\cQ_N}(\mathbf{Q}_{\ast}^{D,m,M}\setminus \mathbf{Q}_{\ast}^{D,m,M,\delta}) \le \frac{2\delta}{m}$, so Lemma~\ref{lem:QCompact} implies that the family $(\nu_{\cQ_N})$ on $\mathbf{Q}_{\ast}^{D,m,M}$ is \emph{tight}. Using Prokhorov's theorem, we obtain:

\begin{corollaire}\label{cor:BSComp}
Let $D\in\N$, $M>m>0$, and let $\cQ_N=(V_N,E_N,L_N,W_N, \beta_N, U_N)$ be a sequence of quantum graphs satisfying \eqref{eq:ConditionCompQG} for all $N\in \N$.
Then there is a subsequence $\cQ_{N_k}$ which converges in the sense of Benjamini-Schramm (i.e. there exists $\mathbb{P}\in \mathcal{P}(\mathbf{Q}_*^{D,m,M})$ such that $\nu_{{Q}_{N_k}}\xrightarrow{w^*} \mathbb{P}$).
\end{corollaire}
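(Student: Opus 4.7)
The proof is a direct application of Prokhorov's theorem on the Polish space $(\mathbf{Q}_*, d)$. The two ingredients needed are already in hand: Lemma \ref{lem:QCompact} provides a compact exhaustion of $\mathbf{Q}_*^{D,m,M}$ by the closed sets $\mathbf{Q}_*^{D,m,M,\delta}$, and the estimate displayed just above the corollary, $\nu_{\cQ_N}(\mathbf{Q}_*^{D,m,M}\setminus \mathbf{Q}_*^{D,m,M,\delta}) \leq 2\delta/m$, supplies the uniform tail bound needed for tightness. The plan is therefore to verify this estimate, deduce tightness, invoke Prokhorov to extract a weakly-$*$ convergent subsequence, and then check that the limit remains supported on $\mathbf{Q}_*^{D,m,M}$.

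For the estimate, I would unwind the definition
\[
\nu_{\cQ_N} = \frac{1}{2\mathcal{L}(\cQ_N)}\sum_{b_0\in \mathcal{B}_N}\int_0^{L_{b_0}}\delta_{[\cQ_N,(b_0,x_0)]}\,\mathrm{d}x_0
\]
and observe that $[\cQ_N,(b_0,x_0)]$ lies in $\mathbf{Q}_*^{D,m,M,\delta}$ exactly when $x_0\in [\delta, L_{b_0}-\delta]$. Each oriented edge therefore contributes at most $2\delta$ to the mass of the complement, so
\[
\nu_{\cQ_N}\bigl(\mathbf{Q}_*^{D,m,M}\setminus \mathbf{Q}_*^{D,m,M,\delta}\bigr) \leq \frac{|\mathcal{B}_N|\, \delta}{\mathcal{L}(\cQ_N)} = \frac{2|E_N|\,\delta}{\mathcal{L}(\cQ_N)} \leq \frac{2\delta}{m}\,,
\]
using $\mathcal{L}(\cQ_N) \geq m |E_N|$. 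Given $\epsilon>0$, the choice $\delta = m\epsilon/2$ then yields $\nu_{\cQ_N}(\mathbf{Q}_*^{D,m,M,\delta}) \geq 1-\epsilon$ uniformly in $N$, which combined with the compactness of $\mathbf{Q}_*^{D,m,M,\delta}$ is precisely the tightness hypothesis of Prokhorov's theorem.

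Prokhorov then produces a subsequence $(\nu_{\cQ_{N_k}})$ converging weakly-$*$ to some $\mathbb{P}\in \cP(\mathbf{Q}_*)$. To conclude, I would verify that $\mathbb{P}$ is actually supported on $\mathbf{Q}_*^{D,m,M}$. The complement $\mathbf{Q}_*\setminus \mathbf{Q}_*^{D,m,M,\delta}$ is open (since $\mathbf{Q}_*^{D,m,M,\delta}$ is compact, hence closed), so the portmanteau theorem gives
\[
\mathbb{P}\bigl(\mathbf{Q}_*\setminus \mathbf{Q}_*^{D,m,M,\delta}\bigr) \leq \liminf_{k}\, \nu_{\cQ_{N_k}}\bigl(\mathbf{Q}_*\setminus \mathbf{Q}_*^{D,m,M,\delta}\bigr) \leq \frac{2\delta}{m}\,.
\]
Letting $\delta\to 0$ along a countable sequence and noting that $\bigcup_{\delta>0}\mathbf{Q}_*^{D,m,M,\delta} = \mathbf{Q}_*^{D,m,M}$ (each interior point of an edge sits at positive distance from both endpoints) yields $\mathbb{P}(\mathbf{Q}_*^{D,m,M})=1$, i.e., $\mathbb{P}\in \cP(\mathbf{Q}_*^{D,m,M})$. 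There is no serious obstacle here: once Lemma \ref{lem:QCompact} is in hand, the corollary is a routine tightness-plus-Prokhorov argument, and the author's own framing of the tail estimate confirms that this is the intended path.
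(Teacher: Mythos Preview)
Your proof is correct and follows exactly the same approach as the paper: the estimate $\nu_{\cQ_N}(\mathbf{Q}_*^{D,m,M}\setminus \mathbf{Q}_*^{D,m,M,\delta}) \le 2\delta/m$ together with the compactness of $\mathbf{Q}_*^{D,m,M,\delta}$ from Lemma~\ref{lem:QCompact} gives tightness, and Prokhorov's theorem yields the convergent subsequence. You have simply filled in the details the paper leaves implicit (the computation of the tail bound and the verification via portmanteau that the limit is supported on $\mathbf{Q}_*^{D,m,M}$).
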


In other words, for any bounded continuous $F :  \mathbf{Q}_*^{D,m,M}\To \C$, we have
\[
 \frac{1}{ \mathcal{L}(\cQ_{N_k})}  \int_{\cG_{N_k}}F\big{(}[\cQ_{N_k},\mathbf{x_0}]\big{)}\mathrm{d}\mathbf{x_0} \To \int_{\mathbf{Q}_*^{D,m,M}} F([\cQ,\mathbf{x_0}])\,\dd\mathbb{P}([\cQ,\mathbf{x_0}]) =: \mathbb{E}_\mathbb{P}\big{[} F\big{]}\,.
\]

\smallskip

In the classical theory of local-weak convergence, a probability measure $\rho$ on the set of discrete rooted graphs is said to be \emph{sofic} if it is a Benjamini-Schramm limit of finite graphs. It is known that any sofic measure is \emph{unimodular}. This means that it has a weak form of homogeneity known as the ``Mass Transport Principle'', which is important for applications \cite{AL,BS}. Knowing if conversely any unimodular measure is sofic, is an open problem, see \cite{Elek,BLS,AHNR} for recent progress.

In our context of quantum graphs, ``sofic'' measures also have a nice property:

\begin{lemme}\label{lem:nalini}
Suppose $\cQ_N$ is a sequence of finite quantum graphs which converges to $\mathbb{P}$ in the sense of Benjamini-Schramm. Then $\mathbb{P}$ satisfies, for any bounded continuous $F:\mathbf{Q}_{\ast}\To\C$:
\[
\mathbb{E}_{\mathbb{P}}\left(F[\cQ,\mathbf{x_0}]\right) = \mathbb{E}_{\mathbb{P}}\bigg(\frac{1}{|\mathbf{B}_{\mathbf{o}}^1|} \int_{\mathbf{B}_{\mathbf{o}}^1} F[\cQ,\mathbf{x}]\,\dd \mathbf{x}\bigg) ,
\]
where $\mathbf{B}_{\mathbf{o}}^1$ is the metric graph induced from the combinatorial unit ball around a random root $o(b_0)$, i.e. $\mathbf{B}_{\mathbf{o}}^1 = \{(b,y)\in \cQ : o(b)=o(b_0)\}$.
\end{lemme}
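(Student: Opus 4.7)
The plan is to derive the identity from its exact analog on each finite graph $\cQ_N$ and then pass to the Benjamini--Schramm limit. Introduce
\[
\Phi_F[\cQ,\mathbf{x_0}] := \frac{1}{|\mathbf{B}_{\mathbf{o}}^1|}\int_{\mathbf{B}_{\mathbf{o}}^1} F[\cQ, \mathbf{x}]\,\dd\mathbf{x},
\]
which is well-defined on $\mathbf{Q}_*$ because the ordering of $(\beta^{\mathbf{x_0}})^{v_{\mathbf{x_0}}}$ in Definition~\ref{def:adding} is preserved by the equivalence of Definition~\ref{def:equiv}, so the distinguished orientation $b_0$, hence $o(b_0)$, is intrinsic to the equivalence class $[\cQ,\mathbf{x_0}]$. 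For every finite $\cQ_N$, unrolling the definition of $\nu_{\cQ_N}$ and noting that $\Phi_F[\cQ_N,(b_0,x_0)]$ depends on $(b_0,x_0)$ only through $o(b_0)$, the $\dd x_0$-integral contributes $L_{b_0}$; grouping $\sum_{b_0}$ by $v=o(b_0)$ and using $\sum_{b_0:o(b_0)=v}L_{b_0}=|\mathbf{B}_v^1|$ cancels the denominator of $\Phi_F$, leaving
\[
\mathbb{E}_{\nu_{\cQ_N}}(\Phi_F)=\frac{1}{2\mathcal{L}(\cQ_N)}\sum_{v\in V_N}\int_{\mathbf{B}_v^1}F[\cQ_N,\mathbf{x}]\,\dd\mathbf{x}=\frac{1}{2\mathcal{L}(\cQ_N)}\sum_{b\in\mathcal{B}_N}\int_0^{L_b}F[\cQ_N,(b,y)]\,\dd y = \mathbb{E}_{\nu_{\cQ_N}}(F),
\]
since every bond appears exactly once as an outgoing bond at its origin.

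Next I show that $\Phi_F$ is bounded by $\|F\|_\infty$ and continuous on every compact set $\mathbf{Q}_*^{D,m,M,\delta}$ ($\delta>0$). If $[\cQ_n,\mathbf{x}_n]\to[\cQ,\mathbf{x_0}]$ inside $\mathbf{Q}_*^{D,m,M,\delta}$, then for any $r\geq 2$ and $n$ large, the label-preserving isomorphisms $\phi_n: B_{G_n^{\mathbf{x}_n}}(v_{\mathbf{x}_n},r)\xrightarrow{\sim} B_{G^{\mathbf{x_0}}}(v_{\mathbf{x_0}},r)$ supplied by the BS definition identify $o(b_0^{(n)})$ with $o(b_0)$ and $\mathbf{B}_{o(b_0^{(n)})}^1$ with $\mathbf{B}_{o(b_0)}^1$, with convergence of all lengths, potentials, and boundary matrices in this neighborhood. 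Parameterizing each outgoing edge by $t\in[0,1]$, the rerooted quantum graphs at corresponding points converge in the BS topology; continuity of $F$ combined with dominated convergence (with $\|F\|_\infty$ as dominator) then yields $\Phi_F[\cQ_n,\mathbf{x}_n]\to\Phi_F[\cQ,\mathbf{x_0}]$.

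Finally, since $\nu_{\cQ_N}(\mathbf{Q}_*^{D,m,M}\setminus \mathbf{Q}_*^{D,m,M,\delta})\leq 2\delta/m$ uniformly in $N$ (and the same bound passes to $\mathbb{P}$), the weak-$*$ convergence $\nu_{\cQ_N}\xrightarrow{w^*}\mathbb{P}$ combined with the continuity of $F$ and $\Phi_F$ on each $\mathbf{Q}_*^{D,m,M,\delta}$ yields $\mathbb{E}_{\nu_{\cQ_N}}(F)\to \mathbb{E}_{\mathbb{P}}(F)$ and $\mathbb{E}_{\nu_{\cQ_N}}(\Phi_F)\to \mathbb{E}_{\mathbb{P}}(\Phi_F)$; by the finite-graph identity the two sequences agree termwise, hence so do their limits. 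The main obstacle is the continuity step: one must carefully track the metric graph structure inside the unit ball around the canonically chosen vertex $o(b_0)$ under BS convergence, and handle the negligible locus where $\mathbf{x_0}$ lies arbitrarily close to a vertex (where $\Phi_F$ may be discontinuous), which is precisely why we introduce the parameter $\delta>0$ and approximate.
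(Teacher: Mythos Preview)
Your proof is correct and follows the same approach as the paper: establish the identity $\mathbb{E}_{\nu_{\cQ_N}}(F)=\mathbb{E}_{\nu_{\cQ_N}}(\Phi_F)$ on each finite graph by the same regrouping over $v\in V_N$, then pass to the limit. The paper's proof simply writes ``The claim follows by taking $N\to\infty$'' without further justification, whereas you supply the continuity of $\Phi_F$ on $\mathbf{Q}_*^{D,m,M,\delta}$ and the tightness argument that make this passage rigorous.
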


More explicitly, the lemma says that
\[
\int_{\mathbf{Q}_{\ast}} F[\cQ,\mathbf{x_0}]\,\dd\mathbb{P}[\cQ,\mathbf{x_0}] = \int_{\mathbf{Q}_{\ast}} \bigg(\frac{1}{|\mathbf{B}_{\mathbf{o}}^1|} \sum_{b:o(b)=o(b_0)}\int_0^{L_b} F[\cQ,(b,x)]\,\dd x\bigg) \dd \mathbb{P}[\cQ,(b_0,x_0)] \,.
\]
Note that the integrand on the RHS only depends on $o(b_0)$: for fixed $\cQ$ and $b_0\in \cQ$, the integrand is fixed as $x_0$ varies over $(0,L_{b_0})$. We give an application of this result in Lemma~\ref{lem:spectralmeasures}.

\begin{proof}[Proof of Lemma~\ref{lem:nalini}]
Denoting $\mathbf{B}_v^1$ the metric graph induced from a combinatorial unit ball around $v\in V_N$, we have
\begin{align*}
\mathbb{E}_{\nu_{\cQ_N}}(F[\cQ,\mathbf{x}_0]) &= \frac{1}{2\cL(\cQ_N)} \sum_{v\in V_N} \sum_{b,o(b)=v} \int_0^{L_b} F[\cQ_N,(b,x_0)]\,\dd x_0 \\
&= \frac{1}{2\cL(\cQ_N)} \sum_{v\in V_N} \int_{\mathbf{B}_v^1} F[\cQ_N,\mathbf{x}]\,\dd\mathbf{x} \\
&= \frac{1}{2\cL(\cQ_N)} \sum_{v\in V_N} \sum_{b_0,o(b_0)=v} \int_0^{L_{b_0}} \frac{1}{|\mathbf{B}_v^1|}\left( \int_{\mathbf{B}_v^1} F[\cQ_N,\mathbf{x}]\,\dd\mathbf{x}\right)\dd x_0\\
&= \mathbb{E}_{\nu_{\cQ_N}}\left(\frac{1}{|\mathbf{B}_{\mathbf{o}}^1|}\int_{\mathbf{B}_{\mathbf{o}}^1} F[\cQ,\mathbf{x}]\,\dd\mathbf{x}\right).
\end{align*}
The claim follows by taking $N\To\infty$.
\end{proof}

\subsection{Spectral theory}

Before we address the convergence of ESM discussed in the introduction, let us present some results of independent interest.

Let $\cQ=(V,E,L,W, \beta, U)$ be a quantum graph and $\mathbf{x_0},\mathbf{y_0}\in \cG$. For $z\in \C\setminus \R$, we denote by $G_z(\mathbf{x_0},\mathbf{y_0})$ the Schwartz kernel
of $(H_\cQ-z)^{-1}$ (called the Green's function). Its basic properties are given in Appendix \ref{sec:app1}.

\begin{theoreme}\label{th:GreenContinuous2}
Consider the map $\mathbf{G}_z:\mathbf{Q}_* \ni [(V,E,L,W, \beta, U,\mathbf{x_0}) ] \mapsto G_z(\mathbf{x_0},\mathbf{x_0})\in \C$. Let $M>m>0$, $D\in \N$. For all $z\in \C\setminus \R$, $\mathbf{G}_z$ is continuous on $\mathbf{Q}_*^{D,m,M}$.
\end{theoreme}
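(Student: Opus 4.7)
The plan is to reduce the computation of $G_z(\mathbf{x_0},\mathbf{x_0})$ to a finite-dimensional linear-algebra problem on a combinatorial ball around the root, using the evolution operator $M(z)$ from Section~\ref{sec:Evolution}. First I would pass to the quantum graph $\cQ^{\mathbf{x_0}}$ of Definition~\ref{def:adding}, so that $\mathbf{x_0}$ becomes a genuine vertex $v_{\mathbf{x_0}}$ with Kirchhoff boundary conditions. Since $y\mapsto G_z(\mathbf{x_0},y)$ satisfies the homogeneous equation $(H_{\cQ^{\mathbf{x_0}}}-z)f=0$ away from $v_{\mathbf{x_0}}$, it can be expanded on every bond in a two-dimensional basis of solutions; the collection of coefficients then satisfies a linear system encoded by $I-M(z)$ with a source supported on the bonds incident to $v_{\mathbf{x_0}}$, and $G_z(\mathbf{x_0},\mathbf{x_0})$ is recovered by evaluating this vector at the root.

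The key input is that, by construction of $M(z)$, there is a threshold $R_0=R_0(D,m,M)>0$ such that for every $[\cQ,\mathbf{x_0}]\in\mathbf{Q}_*^{D,m,M}$ and every $z$ with $|\Im z|\ge R_0$, one has $\|M(z)\|\le\rho<1$ with $\rho$ depending only on $D,m,M$ and $|\Im z|$. Expanding $(I-M(z))^{-1}=\sum_{k\ge 0}M(z)^k$, the contribution of bonds at combinatorial distance $>R$ from $v_{\mathbf{x_0}}$ is $O(\rho^R)$. Hence one obtains a function $F_{z,R}[\cQ,\mathbf{x_0}]$ depending only on the $R$-ball $B_{G^{\mathbf{x_0}}}(v_{\mathbf{x_0}},R)$ together with the associated data $(L,W,U)$ on it, with
\[
\bigl|G_z(\mathbf{x_0},\mathbf{x_0})-F_{z,R}[\cQ,\mathbf{x_0}]\bigr|\le C\rho^R
\]
uniformly on $\mathbf{Q}_*^{D,m,M}$. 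Now $F_{z,R}$ is the result of inverting a finite matrix whose entries depend continuously on $(L,W,U)$ and which stays invertible because it inherits the contraction bound on $M(z)$, so $F_{z,R}$ is continuous in its finite-dimensional arguments. Since the pseudometric $d$ guarantees that $d([\cQ_n,\mathbf{x_n}],[\cQ,\mathbf{x_0}])\to 0$ implies, for each fixed $R$, eventual isomorphism of the $R$-balls of $\cQ_n^{\mathbf{x_n}}$ and $\cQ^{\mathbf{x_0}}$ together with convergence of all data, this yields continuity of $\mathbf{G}_z$ on $\mathbf{Q}_*^{D,m,M}$ whenever $|\Im z|\ge R_0$.

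To remove the restriction $|\Im z|\ge R_0$, I would use that $z\mapsto G_z(\mathbf{x_0},\mathbf{x_0})$ is holomorphic on $\C\setminus\R$ for each fixed $[\cQ,\mathbf{x_0}]$. A uniform local bound on $\mathbf{G}_z$ over $\mathbf{Q}_*^{D,m,M}$, for $z$ in compact subsets of $\C\setminus\R$, follows from the resolvent estimate $\|(H_\cQ-z)^{-1}\|_{L^2\to L^2}\le 1/|\Im z|$ combined with standard edge-wise elliptic estimates mapping $L^2$ into $H^2$ and the Sobolev embedding $H^2\hookrightarrow C^0$ on each edge (valid uniformly since $\underline{L}(\cQ)\ge m$). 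A Vitali--Porter normal-families argument then upgrades pointwise convergence of $\mathbf{G}_z$ on the region $\{|\Im z|\ge R_0\}$ to locally uniform convergence on $\C\setminus\R$, delivering continuity for every $z\in\C\setminus\R$.

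The hard part will be establishing the uniform contraction bound $\|M(z)\|\le\rho<1$ over all of $\mathbf{Q}_*^{D,m,M}$ with a single threshold $R_0=R_0(D,m,M)$; this is precisely where Hypothesis~1, and in particular the Robin-part bound \eqref{e:robinpart}, should enter, ensuring that the local scattering matrices at the vertices are uniformly controlled and that $M(z)$ can be made sub-unitary independently of the specific graph. A secondary subtlety is checking that the finite-dimensional Schur-type formula defining $F_{z,R}$ remains well-posed under small perturbations of the data, so that convergence of the $R$-ball data in the Benjamini--Schramm topology translates into genuine continuity of $F_{z,R}$ without accidental singularities of the truncated problem.
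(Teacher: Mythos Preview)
Your overall architecture matches the paper's: reduce $G_z(\mathbf{x_0},\mathbf{x_0})$ to a linear system via the evolution operator, prove continuity first for $|\Im z|$ large by a Neumann-series/locality argument, then propagate to all of $\C\setminus\R$ by Vitali--Porter using the uniform bound of Lemma~\ref{lem:BoundGreenImagin}. That last step is exactly what the paper does.

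However, there is a genuine gap in the first step. You place the auxiliary vertex at $\mathbf{x_0}$ itself. After that operation the graph $\cQ^{\mathbf{x_0}}$ has two new edges of lengths $x_0$ and $L_{b_0}-x_0$, and these are \emph{not} bounded below uniformly on $\mathbf{Q}_*^{D,m,M}$: as $\mathbf{x_0}$ approaches an endpoint of its edge, one of them goes to zero. The contraction estimate of Corollary~\ref{cor:InvertLargeIm} (and already Lemma~\ref{lem:LargeIm}) depends explicitly on the lower bound $\underline{L}(\cQ)$, so your claimed threshold $R_0=R_0(D,m,M)$ cannot be obtained this way; the constant blows up as $\min(x_0,L_{b_0}-x_0)\to 0$. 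The paper flags this explicitly at the start of the proof of Proposition~\ref{prop:GreenContinuous} and fixes it by inserting the vertex at a point $\mathbf{x_1}=(b_0,\kappa L_{b_0})$ with $\kappa\in[\tfrac12,\tfrac23]$, so that the new edge lengths stay $\ge m/3$. One then gets continuity of $[\cQ,\mathbf{x_0}]\mapsto G_z(\mathbf{x_1},\mathbf{x_0})$ and of its $x$-derivative at $\mathbf{x_1}$ (via Lemma~\ref{lem:maxime}), and recovers $G_z(\mathbf{x_0},\mathbf{x_0})$ through the expansion \eqref{e:firstexpan} in the $C^z,S^z$ basis.

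A secondary point: in the general setting of Section~\ref{sec:Evolution} it is not $\mathbf{S}(z)\mathbf{D}(z)$ that is sub-unitary but its inverse (Corollary~\ref{cor:InvertLargeIm}), because $|E_b^z(L_b)|\sim e^{L_b\Im\sqrt{z}}$ is large. So the Neumann series one actually uses is $(\mathrm{Id}-A)^{-1}=-\sum_{k\ge 1}A^{-k}$ as in \eqref{eq:NeumanSeries}, not $\sum_{k\ge 0}A^k$. This does not affect the locality argument (since $(\mathbf{S}\mathbf{D})^{-1}$ is still local after conjugating by the edge-reversal $J$), but your statement ``$\|M(z)\|\le\rho<1$'' needs to be amended accordingly.
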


In other words, if $[\cQ_N,\mathbf{x}_N]\To [\cQ,\mathbf{x}]$ in the metric \eqref{eq:defdistance}, then $G_z^{\cQ_N}(\mathbf{x}_N,\mathbf{x}_N) \To G_z^{\cQ}(\mathbf{x},\mathbf{x})$.

\begin{corollaire}\label{cor:bsconv}
Let $\cQ_N$ be a sequence of quantum graphs satisfying \eqref{eq:ConditionCompQG} and converging in the sense of Benjamini-Schramm to $\mathbb{P}$. Then for all $z\in \C\setminus \R$, we have 
\[
\lim\limits_{N\To \infty} \frac{1}{\cL(\cQ_N)} \int_{\cG_N}G_z(\mathbf{x_0},\mathbf{x_0})\,\dd\mathbf{x_0} =  \mathbb{E}_\mathbb{P}\big{[}\mathbf{G}_z \big{]}\,.
\]
\end{corollaire}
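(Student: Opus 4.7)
The strategy is to interpret both sides of the corollary as expectations of the function $\mathbf{G}_z$ against probability measures on $\mathbf{Q}_*^{D,m,M}$, and then pass to the limit by combining the Benjamini--Schramm weak-$\ast$ convergence with the continuity of $\mathbf{G}_z$ provided by Theorem~\ref{th:GreenContinuous2}. Unpacking the definition of $\nu_{\cQ_N}$ yields
\[
\frac{1}{\cL(\cQ_N)}\int_{\cG_N}G_z(\mathbf{x_0},\mathbf{x_0})\,\dd\mathbf{x_0}=\int_{\mathbf{Q}_*^{D,m,M}}\mathbf{G}_z\,\dd\nu_{\cQ_N}=\mathbb{E}_{\nu_{\cQ_N}}[\mathbf{G}_z]\,,
\]
and the right-hand side of the corollary is exactly $\mathbb{E}_{\mathbb{P}}[\mathbf{G}_z]$. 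By hypothesis \eqref{eq:ConditionCompQG}, each $\nu_{\cQ_N}$ is supported in $\mathbf{Q}_*^{D,m,M}$, and by Corollary~\ref{cor:BSComp}, $\nu_{\cQ_N}\xrightarrow{w^*}\mathbb{P}$. Since Theorem~\ref{th:GreenContinuous2} gives continuity of $\mathbf{G}_z$ on this space, the entire claim reduces to verifying that $\mathbf{G}_z$ is also uniformly bounded on $\mathbf{Q}_*^{D,m,M}$; the desired limit is then nothing but the definition of weak-$\ast$ convergence applied to the bounded continuous test function $\mathbf{G}_z$.

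For the boundedness, I would either invoke the basic properties of Green's kernels collected in Appendix~\ref{sec:app1}, or argue directly as follows. The standard resolvent bound $\|(H_\cQ-z)^{-1}\|_{L^2\to L^2}\leq 1/|\Im z|$ together with the Sturm--Liouville equation $-u''+Wu-zu=f$ satisfied edgewise by $u=(H_\cQ-z)^{-1}f$ gives, via elementary elliptic estimates, $\|u\|_{H^2(e)}\leq C\|f\|_{L^2}$ on each edge $e$, with a constant $C$ depending only on $\|W\|_\infty$, $|z|$ and $|\Im z|$. As edge lengths lie in $[m,M]$, the one-dimensional Sobolev embedding $H^2(e)\hookrightarrow L^\infty(e)$ has a constant depending only on $m,M$. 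Thus $f\mapsto u(\mathbf{x_0})$ is a bounded functional on $L^2$, and Riesz representation yields $\|G_z(\mathbf{x_0},\cdot)\|_{L^2}\leq C'$ uniformly. Next, the function $v(y):=G_z(\mathbf{x_0},y)$ solves $(H_\cQ-z)v=\delta_{\mathbf{x_0}}$, so $v\in H^1(\cG)$ with a unit jump of derivative at $\mathbf{x_0}$ and $v\in H^2(e)$ for every edge $e$ not containing $\mathbf{x_0}$. Applying the embedding $H^1(e)\hookrightarrow L^\infty(e)$ on the edge containing $\mathbf{x_0}$ produces the sought-after pointwise bound $|G_z(\mathbf{x_0},\mathbf{x_0})|\leq C''(D,m,M,\|W\|_\infty,|\Im z|)$.

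The main technical obstacle is indeed this uniform pointwise estimate on the diagonal of the Schwartz kernel, since $\delta_{\mathbf{x_0}}\notin L^2$ and the $L^2$-resolvent bound does not directly give pointwise control. Once this bound is in hand, the corollary is a routine application of weak-$\ast$ convergence of the probability measures $\nu_{\cQ_N}$ to the bounded continuous function $\mathbf{G}_z$ on $\mathbf{Q}_*^{D,m,M}$.
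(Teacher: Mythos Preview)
Your proof is correct and follows essentially the same route as the paper: continuity of $\mathbf{G}_z$ from Theorem~\ref{th:GreenContinuous2}, uniform boundedness on $\mathbf{Q}_*^{D,m,M}$ (the paper cites Lemma~\ref{lem:BoundGreenImagin} in Appendix~\ref{sec:app1}, which is precisely the option you mention first), and then weak-$\ast$ convergence applied to this bounded continuous test function. One small slip: the convergence $\nu_{\cQ_N}\xrightarrow{w^*}\mathbb{P}$ is part of the hypothesis of the corollary, not a consequence of Corollary~\ref{cor:BSComp} (which only extracts a convergent subsequence).
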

\begin{proof}
The map $\mathbf{G}_z$ is continuous on $\mathbf{Q}_{\ast}^{D,m,M}$ by Theorem~\ref{th:GreenContinuous2}. It is also bounded by Lemma~\ref{lem:BoundGreenImagin}. By hypothesis, $\nu_{\cQ_N}$ converges weakly to $\mathbb{P}$. Hence, $\int \mathbf{G}_z\,\dd \nu_{\cQ_N}\To \int \mathbf{G}_z\,\dd \mathbb{P}$.
\end{proof}

Other interesting continuous functionals are given by \emph{integral kernels of functions of the Laplacian}, as explained in the following theorem, which we prove in Section \ref{subsec:HelfSj}.

\begin{theoreme}\label{th:IntegralKernelsAreCool}
Let $\chi\in C_c(\R)$. Then $\chi(H_\cQ)$ has an integral kernel $\chi(H_\cQ)(\mathbf{x_0}, \mathbf{x_1})$ which is continuous: for any $b_0, b_1\in \mathcal{B}$, the map $(0, L_{b_0}) \times (0, L_{b_1})\ni (x_0,x_1)\mapsto \chi\big{(}H_\cQ\big{)}\big{(}(b_0,x_0),(b_1,x_1)\big{)}$ is continuous.

Furthermore, the map $\mathbf{Q}_*^{D,m,M}\ni [\cQ, \mathbf{x_0}] \mapsto \chi(H_\cQ) (\mathbf{x_0}, \mathbf{x_0})$ is continuous.
\end{theoreme}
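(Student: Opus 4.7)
The strategy is to represent $\chi(H_\cQ)$ as a contour integral of the resolvent through the Helffer-Sjöstrand formula, and then channel the continuity of $G_z$ (Theorem \ref{th:GreenContinuous2} for the BS topology, and Appendix \ref{sec:app1} for pointwise continuity in $(\mathbf{x_0},\mathbf{x_1})$) through this integral. First I would reduce to $\chi\in C_c^\infty(\R)$: given $\chi\in C_c(\R)$ with $\mathrm{supp}\,\chi\subset[-R,R]$, approximate by $\chi_n\in C_c^\infty(\R)$ with support in $[-R-1,R+1]$, e.g.\ by convolution with mollifiers. Operator-norm convergence $\|\chi_n(H_\cQ)-\chi(H_\cQ)\|\le\|\chi_n-\chi\|_\infty\to 0$ is immediate from the spectral theorem, and the upgrade to uniform convergence of the continuous kernels produced in the next step can be achieved via equicontinuity, using a uniform bound on the kernel in an appropriate $C^k$ norm of $\chi$ that becomes visible once the HS formula is in place.

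For $\chi\in C_c^\infty(\R)$, choose a compactly supported almost analytic extension $\tilde\chi:\C\to\C$ satisfying $|\bar\partial\tilde\chi(z)|\le C_N|\Im z|^N$ for every $N$. The Helffer-Sjöstrand identity
\[
\chi(H_\cQ)=-\frac{1}{\pi}\int_\C\bar\partial\tilde\chi(z)\,(H_\cQ-z)^{-1}\,\dd L(z)
\]
converges in operator norm thanks to $\|(H_\cQ-z)^{-1}\|\le|\Im z|^{-1}$. I would then promote it to the pointwise kernel identity
\[
\chi(H_\cQ)(\mathbf{x_0},\mathbf{x_1})=-\frac{1}{\pi}\int_\C\bar\partial\tilde\chi(z)\,G_z(\mathbf{x_0},\mathbf{x_1})\,\dd L(z).
\]
This requires a uniform polynomial estimate
\[
|G_z(\mathbf{x_0},\mathbf{x_1})|\le C_{D,m,M}\bigl(1+|\Im z|^{-1}\bigr)
\]
valid for all $\cQ\in\mathbf{Q}_*^{D,m,M}$ and all $\mathbf{x_0},\mathbf{x_1}\in\cG$, which combined with the higher-order vanishing of $\bar\partial\tilde\chi$ at $\R$ makes the integrand absolutely integrable and permits Fubini (rigorously, after testing against smooth bump approximations of $\delta_{\mathbf{x_0}},\delta_{\mathbf{x_1}}$ and passing to the limit).

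Both continuity statements are then immediate consequences of the kernel identity and dominated convergence. Pointwise continuity of $(x_0,x_1)\mapsto\chi(H_\cQ)((b_0,x_0),(b_1,x_1))$ with $\cQ$ fixed follows from the continuity of $G_z(\cdot,\cdot)$ on each pair of edges, as recorded in Appendix \ref{sec:app1}. Continuity of $[\cQ,\mathbf{x_0}]\mapsto\chi(H_\cQ)(\mathbf{x_0},\mathbf{x_0})$ on $\mathbf{Q}_*^{D,m,M}$ follows from Theorem \ref{th:GreenContinuous2}, which for each $z\in\C\setminus\R$ gives $G_z^{\cQ_n}(\mathbf{x}_n,\mathbf{x}_n)\to G_z^{\cQ}(\mathbf{x}_\infty,\mathbf{x}_\infty)$ whenever $[\cQ_n,\mathbf{x}_n]\to[\cQ,\mathbf{x}_\infty]$; the uniform kernel bound lets us exchange this limit with the HS integral. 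The main obstacle is precisely the uniform pointwise kernel bound on $G_z$: the operator-norm estimate $|\Im z|^{-1}$ is trivial, but upgrading it to a pointwise bound uniform across all graphs in $\mathbf{Q}_*^{D,m,M}$ requires control of Sobolev embedding constants on intervals of length $\ge m$ together with a sub-unitarity estimate on the evolution operator from Section \ref{sec:Evolution}. This is exactly the motivation flagged in the introduction for constructing the evolution operator to be sub-unitary on a large region of the upper half-plane, and it is this step, rather than the Helffer-Sjöstrand manipulation itself, that I expect to carry the real technical weight.
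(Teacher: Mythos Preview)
Your approach to the second statement (continuity on $\mathbf{Q}_*^{D,m,M}$) is essentially the paper's: Helffer--Sj\"ostrand for smooth $\chi$, dominated convergence via the uniform bound $|G_z(\mathbf{x_0},\mathbf{x_0})|\le C|\Im z|^{-1}$ (this is Lemma~\ref{lem:BoundGreenImagin}, proved by Sobolev-type inequalities on edges of length $\ge m/2$ and the operator bound $\|(H_\cQ-z)^{-1}\|\le|\Im z|^{-1}$, \emph{not} via the evolution operator of Section~\ref{sec:Evolution}), then density for $\chi\in C_c$.

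However, your reduction from $C_c$ to $C_c^\infty$ for the \emph{first} statement has a genuine gap. You propose to show that the kernels $\chi_n(H_\cQ)(\cdot,\cdot)$ converge uniformly via equicontinuity, quoting ``a uniform bound on the kernel in an appropriate $C^k$ norm of $\chi$''. But the Helffer--Sj\"ostrand integral only yields $|\chi(H_\cQ)(\mathbf{x_0},\mathbf{x_1})|\le C\|\chi\|_{C^k}$ for some $k\ge 2$ (since $|\bar\partial\tilde\chi(z)|\lesssim |\Im z|^2$ costs two derivatives of $\chi$), and for a non-smooth $\chi$ the $C^k$ norms of mollified approximants $\chi_n$ blow up. So neither equicontinuity nor the Cauchy property of the kernel sequence follows from your bound. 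What is actually needed---and what makes the density argument go through in Lemma~\ref{lem:FoncOpCont}---is a kernel bound linear in $\|\chi\|_{L^\infty}$.

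The paper obtains this by a different route (Lemma~\ref{lem:Simon}, after Simon): for any bounded compactly supported $\chi$, write $\chi(H_\cQ)=(H_\cQ-z)^{-1}\varphi(H_\cQ)(H_\cQ-z)^{-1}$ with $\varphi(x)=(x-z)^2\chi(x)$. The kernel is then $\langle g^{\bar z,\mathbf{x_0}},\varphi(H_\cQ)\,g^{z,\mathbf{x_1}}\rangle$, which is jointly continuous because $\mathbf{x}\mapsto g^{z,\mathbf{x}}$ is $L^2$-continuous (Remark~\ref{rem:L2Cont}) and $\|\varphi(H_\cQ)\|\le C\|\chi\|_\infty$. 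This simultaneously gives continuity of the kernel for all $\chi\in C_c$ and the crucial $L^\infty$ bound \eqref{eq:KernLinfBound}, without ever invoking Helffer--Sj\"ostrand for the first statement. The Helffer--Sj\"ostrand formula is then used only for the Benjamini--Schramm continuity, where smoothness of $\chi$ is harmless because \eqref{eq:KernLinfBound} is already available for the final density step.
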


Let $\cQ$ be a finite quantum graph. Then the operator $H_\cQ$ has a compact resolvent \cite[Theorem 3.1.1]{BeKu13}, so its spectrum $\Sigma(\cQ)$ is a discrete set of eigenvalues.

We now define the empirical spectral measure of $\cQ$ by
\begin{equation}\label{eq: defEmpirical}
\mu_\cQ := \frac{1}{\mathcal{L}(\cQ)} \sum_{\lambda_k\in \Sigma(\cQ)} \delta_{\lambda_k},
\end{equation}
where each $\lambda_k$ in the sum is counted with its multiplicity as an eigenvalue of $H_\cQ$. This defines a locally finite measure on $\R$.

The following theorem is our main result:
\begin{theoreme}\label{th:Empirical}
Let $\cQ_N$ be a sequence of quantum graphs obeying \eqref{eq:ConditionCompQG}, converging in the sense of Benjamini-Schramm to a measure $\mathbb{P}\in \mathcal{P}(\mathbf{Q}_*)$. Then for any $\chi \in C_c(\R)$, we have
\begin{equation}\label{eq:ConvSpecMeas}
\begin{aligned}
\lim\limits_{N\rightarrow \infty} \int_\R \chi(\lambda)\, \mathrm{d}\mu_{\cQ_N}(\lambda) &= \lim\limits_{\varepsilon \downarrow 0} \frac{1}{\pi}\int_\R \chi(\lambda) \mathbb{E}_{\mathbb{P}} \left[ \Im \mathbf{G}_{\lambda+\ii\varepsilon}  \right] \mathrm{d}\lambda\\
&=\int_{\mathbf{Q}_*} \chi\big{(}H_\cQ\big{)}(\mathbf{x_0},\mathbf{x_0}) \dd \mathbb{P}[\cQ,\mathbf{x_0}]\,.
\end{aligned}
\end{equation}
\end{theoreme}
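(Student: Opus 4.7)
I will identify the second equality via a direct application of Theorem \ref{th:IntegralKernelsAreCool}, then recover the first (Poisson) equality through Stieltjes--Perron inversion applied pointwise in the root. Since each $\cQ_N$ is finite, $H_{\cQ_N}$ has discrete spectrum with Weyl growth $\lambda_k\sim k^2$, so for $\chi\in C_c(\R)$ the operator $\chi(H_{\cQ_N})$ is trace-class. Its Schwartz kernel is continuous by Theorem \ref{th:IntegralKernelsAreCool}, hence
\begin{equation*}
\int_\R\chi\,\dd\mu_{\cQ_N}=\frac{1}{\cL(\cQ_N)}\Tr\chi(H_{\cQ_N})=\frac{1}{\cL(\cQ_N)}\int_{\cG_N}\chi(H_{\cQ_N})(\mathbf{x},\mathbf{x})\,\dd\mathbf{x}=\int_{\mathbf{Q}_*}\Phi_\chi\,\dd\nu_{\cQ_N},
\end{equation*}
where $\Phi_\chi([\cQ,\mathbf{x_0}]):=\chi(H_\cQ)(\mathbf{x_0},\mathbf{x_0})$. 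Theorem \ref{th:IntegralKernelsAreCool} asserts continuity of $\Phi_\chi$ on $\mathbf{Q}_*^{D,m,M}$; the Helffer--Sj\"ostrand representation of $\chi(H_\cQ)$ as a contour integral of $G_z$ (the formula underlying Theorem \ref{th:IntegralKernelsAreCool}) combined with the uniform resolvent bound of Lemma \ref{lem:BoundGreenImagin} furnishes a uniform $L^\infty$ bound on $\Phi_\chi$ over that class. Since the measures $\nu_{\cQ_N}$ are supported in $\mathbf{Q}_*^{D,m,M}$ and converge weakly-$*$ to $\mathbb{P}$, I conclude
\begin{equation*}
\lim_N\int_\R\chi\,\dd\mu_{\cQ_N}=\int_{\mathbf{Q}_*}\chi(H_\cQ)(\mathbf{x_0},\mathbf{x_0})\,\dd\mathbb{P}[\cQ,\mathbf{x_0}],
\end{equation*}
which is the second equality of the theorem.

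For the Poisson representation I apply Stieltjes--Perron. For each fixed $[\cQ,\mathbf{x_0}]$, the map $z\mapsto G_z(\mathbf{x_0},\mathbf{x_0})$ is Herglotz on $\C^+$, so $\tfrac{1}{\pi}\Im G_{\lambda+\ii\varepsilon}(\mathbf{x_0},\mathbf{x_0})\,\dd\lambda$ converges vaguely as $\varepsilon\downarrow0$ to a positive Radon measure $\mu^\cQ_{\mathbf{x_0}}$ on $\R$. By Borel functional calculus this measure represents the diagonal kernel: $\chi(H_\cQ)(\mathbf{x_0},\mathbf{x_0})=\int\chi\,\dd\mu^\cQ_{\mathbf{x_0}}$ for every $\chi\in C_c(\R)$. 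Consequently, pointwise in the root,
\begin{equation*}
\chi(H_\cQ)(\mathbf{x_0},\mathbf{x_0})=\lim_{\varepsilon\downarrow0}\frac{1}{\pi}\int_\R\chi(\lambda)\Im G_{\lambda+\ii\varepsilon}(\mathbf{x_0},\mathbf{x_0})\,\dd\lambda.
\end{equation*}
For each fixed $\varepsilon>0$, Fubini identifies $\tfrac{1}{\pi}\int\chi(\lambda)\,\mathbb{E}_\mathbb{P}[\Im\mathbf{G}_{\lambda+\ii\varepsilon}]\,\dd\lambda$ with $\mathbb{E}_\mathbb{P}\bigl[\tfrac{1}{\pi}\int\chi(\lambda)\Im G_{\lambda+\ii\varepsilon}(\mathbf{x_0},\mathbf{x_0})\,\dd\lambda\bigr]$.

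The remaining step is to commute the $\varepsilon\downarrow0$ limit with $\mathbb{E}_\mathbb{P}$, which I plan to carry out by dominated convergence. This requires a bound on $\int_{\supp\chi}\Im G_{\lambda+\ii\varepsilon}(\mathbf{x_0},\mathbf{x_0})\,\dd\lambda$ that is uniform in $\varepsilon>0$ and in $[\cQ,\mathbf{x_0}]\in\mathbf{Q}_*^{D,m,M}$---equivalently, a uniform upper bound on the spectral mass $\mu^\cQ_{\mathbf{x_0}}(K)$ for any compact $K$. This Wegner/Weyl-type estimate is the genuine analytic content beyond the soft weak-convergence argument; I intend to derive it from Hypothesis 1 by applying the Helffer--Sj\"ostrand representation to a smooth cutoff supported slightly above $\supp\chi$ and invoking Lemma \ref{lem:BoundGreenImagin}. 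This uniform spectral bound is the main obstacle; once it is in hand, dominated convergence delivers the first equality and completes the proof.
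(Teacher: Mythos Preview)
Your approach is correct but follows a genuinely different route from the paper. You establish the kernel equality \emph{first}, by writing $\int\chi\,\dd\mu_{\cQ_N}=\int\Phi_\chi\,\dd\nu_{\cQ_N}$ and passing to the limit directly via the continuity and uniform boundedness of $\Phi_\chi$ furnished by Theorem~\ref{th:IntegralKernelsAreCool} and~\eqref{eq:KernLinfBound}. The paper instead proves the Poisson equality first (Lemma~\ref{lem:FirstPart}): it expresses $\sum\chi(\lambda_j^N)$ through an almost-analytic extension on a thin rectangle $\Omega_\varepsilon$, applies the resolvent trace formula of Lemma~\ref{lemTrace} together with Corollary~\ref{cor:bsconv} on the boundary $\Gamma_\varepsilon$, and uses a Herglotz lemma (Lemma~\ref{lem:HerglotzRocks}) to convert the contour integral into $\frac{1}{\pi}\int\chi\,\mathbb{E}_\mathbb{P}[\Im\mathbf{G}_{\lambda+\ii\varepsilon}]\,\dd\lambda+O(\sqrt{\varepsilon})$. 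Only afterward does it derive the kernel representation via the uniform pointwise estimate~\eqref{eq:ExprIntKern}. Your route is more economical for the second equality and uses Theorem~\ref{th:IntegralKernelsAreCool} in the most natural way; the paper's route has the advantage that the contour argument yields, for smooth $\chi$, a quantitative rate $O(\sqrt{\varepsilon})$ uniform over $\mathbf{Q}_*^{D,m,M}$ (this is~\eqref{eq:ExprIntKern}), which immediately handles both the pointwise limit and the interchange with $\mathbb{E}_\mathbb{P}$ that you flag as the main obstacle.

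Two points to tighten. First, the trace identity $\Tr\chi(H_{\cQ_N})=\int_{\cG_N}\chi(H_{\cQ_N})(\mathbf{x},\mathbf{x})\,\dd\mathbf{x}$ is only stated in the paper for the resolvent (Lemma~\ref{lemTrace}); you should note that the same proof applies once the continuous kernel is supplied by Lemma~\ref{lem:Simon}. Second, your appeal to ``Borel functional calculus'' for the identification $\chi(H_\cQ)(\mathbf{x_0},\mathbf{x_0})=\int\chi\,\dd\mu^\cQ_{\mathbf{x_0}}$ is too quick: since $\delta_{\mathbf{x_0}}\notin L^2(\cG)$ this is not automatic, and is exactly the content of~\eqref{eq:WhatIsMu}, whose proof in the paper goes through~\eqref{eq:ExprIntKern}. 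Your own Helffer--Sj\"ostrand sketch does supply what is needed, so this is a matter of attribution rather than a gap.
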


We prove this theorem in Section~\ref{sec:ConvEmp}. The first formula tells us in principle how the spectrum looks like for large $N$. For instance, when the \emph{mean} Green's function is regular (which is often a reasonable assumption), the graph of the density $\lambda\mapsto \mathbb{E}_{\mathbb{P}} [ \Im \mathbf{G}_{\lambda+\ii 0}]$ describes the distribution of the spectrum, with peaks indicating zones of high concentration of eigenvalues. The second formula is more geometric. Together with \eqref{eq:WhatIsMu} below, it says that the empirical measures $\mu_{\cQ_N}$ also approach a measure which is obtained by taking the spatial average of all spectral measures of the limiting operator(s) at random points $\mathbf{x_0}$. Let us explain this further.

Let $[\cQ,\mathbf{x_0}]\in \mathbf{Q}_*^{D,m,M}$. In Appendix~\ref{sec:app1}, we show that $z\mapsto G_z(\mathbf{x_0},\mathbf{x_0})$ is a Herglotz function, i.e., a holomorphic function in $\C\setminus \R$ such that $\Im G_z(\mathbf{x_0},\mathbf{x_0})>0$ if $\Im z>0$. 
By \cite[Theorem 5.9.1]{Simon15}, there exist $c_{\mathbf{x_0}}\geq 0$, $d_{\mathbf{x_0}}\in \R$, and a positive measure $\mu_{\mathbf{x_0}}$ on $\R$ satisfying $\int_\R \frac{\dd \mu_{\mathbf{x_0}}(t)}{1+t^2} <\infty$ such that
\[
G_z(\mathbf{x_0},\mathbf{x_0}) = c_{\mathbf{x_0}}z+ d_{\mathbf{x_0}}+ \int_\R \Big{(} \frac{1}{t-z}- \frac{t}{1+t^2}\Big{)}  \mathrm{d}\mu_{\mathbf{x_0}}(t)\,.
\]
In case of discrete graphs, Green's function takes the form $G_z(v,v) = \int \frac{1}{t-z}\,\dd\mu_v(t)$, where $\mu_v$ is the spectral measure at $v$: $\mu_v(I) = \langle \delta_v,\mathbf{1}_I(H) \delta_v\rangle$. For $\mu_{\mathbf{x_0}}$, we have the following.

\begin{lemme}\label{lem:spectralmeasures}
The following properties hold:
\begin{enumerate}[\rm (i)]
\item For any $[\cQ,\mathbf{x_0}]\in\mathbf{Q}_{\ast}^{D,m,M}$ and $\chi\in C_c(\R)$, we have
\begin{equation}\label{eq:WhatIsMu}
\chi(H_{\cQ})(\mathbf{x_0},\mathbf{x_0}) = \int_{\R} \chi(\lambda)\,\dd\mu_{\mathbf{x_0}}(\lambda)\,.
\end{equation}
\item Let $I$ be a bounded open interval. Then $H_{\cQ}$ has spectrum in $I$ iff there exists $e\in E(\cQ)$ such that $\int_0^{L_e} \mu_{\mathbf{x}}(I)\,\dd x>0$.
\item Let $\cQ_N$ is a sequence of graphs obeying \eqref{eq:ConditionCompQG}, converging to $\mathbb{P}$ in the sense of Benjamini-Schramm. Suppose $\mathbb{P}(H_{\cQ} \text{ has spectrum in } I)>0$. Let $\mathfrak{N}_N(I) = \#\{\lambda_j(H_{\cQ_N})\in I\}$. Then
\begin{equation}\label{e:limvals}
\liminf_{N\to \infty} \frac{\mathfrak{N}_N(I)}{|V_N|} = C_I>0 \,.
\end{equation}
\end{enumerate}
\end{lemme}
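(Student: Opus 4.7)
For (i), my plan is to reuse the Helffer--Sj\"ostrand representation developed in the proof of Theorem~\ref{th:IntegralKernelsAreCool}, which expresses $\chi(H_{\cQ})(\mathbf{x_0},\mathbf{x_0})$ as a $\bar{\partial}$-integral of an almost-analytic compactly supported extension $\tilde{\chi}$ of $\chi\in C_c^\infty(\R)$ against $G_z(\mathbf{x_0},\mathbf{x_0})$. Substituting the Herglotz representation of $G_z(\mathbf{x_0},\mathbf{x_0})$ and applying Fubini, the terms holomorphic in $z$ (namely $c_{\mathbf{x_0}}z+d_{\mathbf{x_0}}$ and the constant piece $-\int \tfrac{t}{1+t^2}\,\dd\mu_{\mathbf{x_0}}(t)$) vanish by Stokes, and the remaining $\int 1/(t-z)\,\dd\mu_{\mathbf{x_0}}(t)$ piece integrates, for each fixed $t$, to $\chi(t)$ by the scalar Helffer--Sj\"ostrand formula. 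Uniform density of $C_c^\infty(\R)$ in $C_c(\R)$ extends \eqref{eq:WhatIsMu} to general $\chi\in C_c(\R)$.

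For (ii), both directions go through (i). Forward: assume $\sigma(H_{\cQ})\cap I\ne\emptyset$ and pick $\chi\in C_c(\R)$ with $\supp\chi\subset I$, $\chi\ge 0$, and $\chi>0$ at some point of $\sigma(H_{\cQ})\cap I$; the spectral theorem gives $\chi(H_{\cQ})\ne 0$. Its kernel $K$ is continuous (Theorem~\ref{th:IntegralKernelsAreCool}), and positivity of the associated quadratic form---tested against approximate delta functions concentrated near two points---yields the Cauchy--Schwarz inequality $|K(\mathbf{x},\mathbf{y})|^2\le K(\mathbf{x},\mathbf{x})K(\mathbf{y},\mathbf{y})$, so the diagonal cannot vanish identically. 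Continuity then produces an open subset of some edge $e$ on which $K(\mathbf{x},\mathbf{x})>0$; combining (i) with $\int\chi\,\dd\mu_\mathbf{x}\le\|\chi\|_\infty\mu_\mathbf{x}(I)$ gives $\int_0^{L_e}\mu_{(b,x)}(I)\,\dd x>0$. Converse: if $\mu_\mathbf{x}(I)>0$ for some $\mathbf{x}$ while $I\subset\rho(H_{\cQ})$, then $z\mapsto G_z(\mathbf{x},\mathbf{x})$ extends analytically across $I$ with real boundary values there (since $(H_{\cQ}-\lambda)^{-1}$ is self-adjoint for $\lambda\in\rho(H_{\cQ})\cap\R$), forcing $\mu_\mathbf{x}(I)=0$ by Stieltjes inversion---a contradiction.

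For (iii), Hypothesis~1 yields $|E_N|\in[\tfrac{|V_N|}{2},\tfrac{D|V_N|}{2}]$ and hence $\cL(\cQ_N)\in[\tfrac{m|V_N|}{2},\tfrac{DM|V_N|}{2}]$, so $\mathfrak{N}_N(I)/|V_N|\ge\tfrac{m}{2}\,\mu_{\cQ_N}(I)$. Theorem~\ref{th:Empirical} together with (i) identifies the vague limit of $\mu_{\cQ_N}$ as $\mu_\infty:=\mathbb{E}_\mathbb{P}[\mu_{\mathbf{x_0}}]$; approximating $\mathbf{1}_I$ from below by $\chi_n\in C_c(I)$ and applying Fatou gives $\liminf_N\mu_{\cQ_N}(I)\ge\mathbb{E}_\mathbb{P}[\mu_{\mathbf{x_0}}(I)]$. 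It thus suffices to prove $\mathbb{E}_\mathbb{P}[\mu_{\mathbf{x_0}}(I)]>0$. Suppose for contradiction this vanishes, so $\mu_{\mathbf{x_0}}(I)=0$ $\mathbb{P}$-a.s. I plan to upgrade Lemma~\ref{lem:nalini} into the bivariate Mass Transport identity $\mathbb{E}_\mathbb{P}\!\int_\cG g(\cQ,\mathbf{x_0},\mathbf{y})\,\dd\mathbf{y}=\mathbb{E}_\mathbb{P}\!\int_\cG g(\cQ,\mathbf{x},\mathbf{x_0})\,\dd\mathbf{x}$ (this holds trivially on each finite approximant $\cQ_N$ by a Fubini relabelling of the double integral over $\cG_N\times\cG_N$, and passes to the BS limit through continuous approximations of $g$). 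Applied to $g(\cQ,\mathbf{x},\mathbf{y})=\mu_{\mathbf{x}}(I)\,\mathbf{1}_{d_{\cQ}(\mathbf{x},\mathbf{y})\le k}$, it gives $\mathbb{E}_\mathbb{P}[|\bar B_k(\mathbf{x_0})|\,\mu_{\mathbf{x_0}}(I)]=\mathbb{E}_\mathbb{P}[\int_{\bar B_k(\mathbf{x_0})}\mu_{\mathbf{x}}(I)\,\dd\mathbf{x}]=0$ for every $k$. Monotone convergence as $k\to\infty$ combined with the connectivity of $\cG$ forces $\int_\cG\mu_\mathbf{x}(I)\,\dd\mathbf{x}=0$ $\mathbb{P}$-a.s., whence by (ii) $\sigma(H_{\cQ})\cap I=\emptyset$ $\mathbb{P}$-a.s., contradicting $\mathbb{P}(\text{spec in }I)>0$. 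The main obstacle I anticipate is rigorously justifying this bivariate mass transport step in the quantum-graph setting---together with the measurability of $[\cQ,\mathbf{x_0}]\mapsto\mu_{\mathbf{x_0}}(I)$ (which follows from the lower semi-continuity inherited via weak-$*$ continuity of the Herglotz measures under continuity of $G_z$)---after which the argument reduces to elementary analysis.
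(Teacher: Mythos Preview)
Your treatment of (i) and the forward implication in (ii) is essentially the paper's. For the converse of (ii) you argue via analytic continuation of $z\mapsto G_z(\mathbf{x_0},\mathbf{x_0})$ across $I\subset\rho(H_\cQ)$ and Stieltjes inversion; the paper instead notes that $\chi(H_\cQ)=0$ is trivially trace-class for $0\le\chi\le\mathbf{1}_I$ and uses the continuous-kernel trace formula (Lemma~\ref{lem:Simon} together with the argument of Lemma~\ref{lemTrace}) to get $0=\int_\cG\chi(H_\cQ)(\mathbf{x},\mathbf{x})\,\dd\mathbf{x}$. Both are correct; your route avoids the trace machinery, the paper's avoids checking pointwise realness of $G_\lambda(\mathbf{x_0},\mathbf{x_0})$ for $\lambda\in\rho(H_\cQ)\cap\R$.

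For (iii) the two arguments coincide up to the point where one must show $\mathbb{E}_\mathbb{P}[\chi(H_\cQ)(\mathbf{x_0},\mathbf{x_0})]>0$. To pass from ``the diagonal kernel vanishes at the root $\mathbb{P}$-a.s.'' to ``$H_\cQ$ has no spectrum in $I$ $\mathbb{P}$-a.s.'', you plan a full bivariate Mass Transport identity for $\mathbb{P}$, applied to $g(\cQ,\mathbf{x},\mathbf{y})=\mu_\mathbf{x}(I)\,\mathbf{1}_{d_\cQ(\mathbf{x},\mathbf{y})\le k}$ and sent $k\to\infty$. The paper does not develop any such MTP; it uses only the one-step averaging Lemma~\ref{lem:nalini}, applied to the \emph{continuous} functional $F[\cQ,\mathbf{x_0}]=\chi(H_\cQ)(\mathbf{x_0},\mathbf{x_0})$ (continuity being Theorem~\ref{th:IntegralKernelsAreCool}), which turns vanishing at the root into vanishing of $\int_{\mathbf{B}^1_\mathbf{o}}\chi(H_\cQ)(\mathbf{x},\mathbf{x})\,\dd\mathbf{x}$ $\mathbb{P}$-a.s., and then invokes \eqref{e:newcool}. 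Your route is conceptually more robust---it would apply to any unimodular limit, not only sofic ones---but it is heavier, and the obstacles you flag are genuine: the bivariate MTP is not available in the paper, and your test function $g$ is discontinuous in both factors (the metric-ball indicator and $[\cQ,\mathbf{x}]\mapsto\mu_\mathbf{x}(I)$), so transferring the identity from $\nu_{\cQ_N}$ to $\mathbb{P}$ needs a real approximation argument. If you want to stay within the tools already on hand, Lemma~\ref{lem:nalini} with $F=\chi(H_\cQ)(\mathbf{x_0},\mathbf{x_0})$ is the paper's shortcut and spares you the new machinery.
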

Points (i) and (ii) show that $\mu_{\mathbf{x_0}}$ can be interpreted as a spectral measure at $\mathbf{x_0}$: functions in $C_c(\R)$ obey the expected ``functional calculus'', and the measures are strongly related to the spectrum of the operator. Point (iii) is a very basic illustration of how we can use information at the limit to learn something about a sequence of converging quantum graphs.
\begin{proof}
(i) By  \cite[Theorem 5.9.1 (g)]{Simon15}, $\int_{\R} \chi(\lambda)\,\dd\mu_{\mathbf{x_0}}(\lambda) = \lim\limits_{\varepsilon \downarrow 0} \frac{1}{\pi}\int_\R \chi(\lambda) \Im G_{\lambda+\ii\varepsilon}(\mathbf{x_0}, \mathbf{x_0}) \mathrm{d}\lambda$. So \eqref{eq:WhatIsMu} follows from \eqref{eq:ExprIntKern} for $\chi \in C_c^{\infty}(\R)$, and by approximation for $\chi\in C_c(\R)$ (see \S\ref{sec:end}).

(ii) Suppose $H_{\cQ}$ has some spectrum in an open interval $I$. Then we may choose $0\neq \chi\in C_c^{\infty}(\R)$, $0\le\chi \le \mathbf{1}_I$, and $f\in L^2(\cQ)$ such that $\langle f,\chi(H_{\cQ})f\rangle >0$. We may assume $f$ is supported in an edge $b_0\in \cQ$.\footnote{For $f=(f_e)$, we have $\langle f, \chi(H)f\rangle =\sum_{e,e'\in E}\langle f_e,\chi(H)f_{e'}\rangle$ and $|\langle f_e,\chi(H) f_{e'}\rangle|^2 = |\langle \chi(H)^{1/2}f_e,\chi(H)^{1/2}f_{e'}\rangle|^2\le \langle f_e,\chi(H)f_e\rangle\langle f_{e'},\chi(H) f_{e'}\rangle$, so $\langle f_e,\chi(H) f_e\rangle=0$ for all $e$ would imply $\langle f,\chi(H) f\rangle = 0$.} We may also assume that $\|f\|=1$. Now
\begin{align}
\langle f,\chi(H_{\cQ})f\rangle &= \int_0^{L_{b_0}}\int_0^{L_{b_0}} \overline{f(x_0)}\chi(H_{\cQ})(x_0,x_1)f(x_1)\,\dd x_0\dd x_1 \nonumber \\
&\le \Big(\int_0^{L_{b_0}}\int_0^{L_{b_0}} |f(x_0)f(x_1)|^2\,\dd x_0 \dd x_1\Big)^{1/2} \Big(\int_0^{L_{b_0}}\int_0^{L_{b_0}} \chi(H_{\cQ})(x_0,x_1)^2\,\dd x_0\dd x_1 \Big)^{1/2} \nonumber \\
\label{e:newcool}
&\le \int_0^{L_{b_0}}\chi(H_{\cQ})(x_0,x_0)\,\dd x_0 \,,
\end{align}
where we used that $\|f\|=1$ along with \eqref{eq:Cauchy-Schwarz}. 

Recalling \eqref{eq:WhatIsMu}, we get $0<\int_0^{L_{b_0}}\int_{\R}\chi(\lambda)\,\dd\mu_{\mathbf{x_0}}(\lambda)\dd x_0 \le \int_0^{L_{b_0}}\mu_{\mathbf{x_0}}(I)\,\dd x_0$.

Conversely, suppose $H_{\cQ}$ has no spectrum in $I$. Then $\mathbf{1}_I(H_{\cQ})=0$, so for any $0\le \chi\le \mathbf{1}_I$ in $C_c(\R)$, the operator $\chi(H_{\cQ})$ is trivially trace-class. Using Lemma~\ref{lem:Simon} and arguing as in Lemma~\ref{lemTrace}, we see that $0 = \Tr \chi(H_{\cQ}) = \int_{\cG} \chi(H)(\mathbf{x_0},\mathbf{x_0})\,\dd \mathbf{x_0}$, so by (i), $\int_0^{L_e}\int_I \chi(\lambda)\,\dd\mu_{\mathbf{x_0}}(\lambda)\,\dd \mathbf{x_0}=0$ for all $e$ and the claim follows.

(iii) For $m$ as in \eqref{eq:ConditionCompQG} and $0\neq \chi\le \mathbf{1}_I$ as before, we have
\[
\frac{\mathfrak{N}_N(I)}{N} \ge m \int_{\R} \chi(\lambda)\,\dd\mu_{\cQ_N}(\lambda) \to m\mathbb{E}_{\mathbb{P}}[\chi(H_{\cQ})(\mathbf{x_0},\mathbf{x_0})] = m\mathbb{E}_{\mathbb{P}}\Big(\frac{1}{|\mathbf{B}_{\mathbf{o}}^1|}\int_{\mathbf{B}_{\mathbf{o}}^1} \chi(H_{\cQ})(\mathbf{x},\mathbf{x})\,\dd \mathbf{x}\Big)
\]
 by Lemma~\ref{lem:nalini}. Now suppose $\mathbb{E}_{\mathbb{P}}(\frac{1}{|\mathbf{B}_{\mathbf{o}}^1|}\int_{\mathbf{B}_{\mathbf{o}}^1} \chi(H_{\cQ})(\mathbf{x},\mathbf{x})\,\dd \mathbf{x})=0$. Then $\int_{\mathbf{B}_{\mathbf{o}}^1} \chi(H_{\cQ})(\mathbf{x},\mathbf{x})\,\dd \mathbf{x} =0$ for $\mathbb{P}$-a.e. $[\cQ,\mathbf{x_0}]$. But we showed in \eqref{e:newcool} that if $H_{\cQ}$ has some spectrum in $I$, then there is $b_0\in \cQ$ such that $\int_0^{L_{b_0}}\chi(H_{\cQ})(x_0,x_0)\,\dd x_0>0$. So for $\mathbb{P}$-a.e. $[\cQ,\mathbf{x_0}]$, the operator $H_{\cQ}$ has no spectrum in $I$. This contradicts the hypothesis, completing the proof.
\end{proof}

\section{The case of Kirchhoff conditions without potentials}\label{sec:Kirch}
In this section, we outline the proof of Theorem~\ref{th:GreenContinuous2} in the case where $W_b=0$ for every $b\in \mathcal{B}$, and where we have Kirchhoff boundary conditions at every vertex. This case is of physical relevance, has been much more studied mathematically (see for instance \cite{BeKu13})  and the theory is simpler in this case than the general constructions of the next section.

In this case, a quantum graph will simply be denoted by $\cQ=(V,E,L)$, and a rooted quantum graph will be given by $\mathrm{Q}=(V,E,L,b_0,x_0)$. We denote by $\mathbf{Q}_{\ast}^{D,m,M,\mathrm{K}}\subset \mathbf{Q}_{\ast}^{D,m,M}$ the set of such classes of Kirchhoff rooted quantum graphs.

Let $z\in \C$. For any $f\in \mathscr{H}$ solution of
$- f'' =z f$, we can find coefficients $(a(b))_{b\in \mathcal{B}}$ such that, for any $b\in \mathcal{B}$ and $x\in [0,L_b]$, we have $f_b(x)=a(b) \ee^{\ii\sqrt{z} x} + a(\hat{b}) \ee^{\ii \sqrt{z} (L_b-x)}$, where $\sqrt{z}$ is such that $\Im \sqrt{z}> 0$ whenever $\Im z>0$.

It is well-known (see e.g. \cite[\S~2.1.2]{BeKu13}) that the condition $A_1(v)F(v)+ A_2(v) F'(v)=0$ in \eqref{e:HQ} is equivalent to asking that the coefficients $a(b)$ to satisfy
\begin{equation}\label{e:upre}
a(b) = \sum_{b';\, t(b')= o(b)} \sigma_{b,\widehat{b'}}^{(o(b))}\ee^{\ii\sqrt{z} L_{b'}}a(b') \,,
\end{equation}
where $\sigma^{(v)}$ is the $d(v)\times d(v)$ \emph{scattering matrix}
\[
\sigma_{b,b'}^{(v)}= \frac{2}{d(v)} \text{ if } b\neq b', \qquad  \sigma_{b,b}^{(v)}= -1 + \frac{2}{d(v)}
\]
for $o(b)=o(b')=v$. An important feature of Kirchhoff boundary conditions is that the $\sigma$ matrices do not depend on $z$. This ceases to be true for conditions with a Robin part.

Consider the \emph{evolution operator} $\cU(z)$, which is a matrix of size $|\mathcal{B}|\times |\mathcal{B}|$ defined by
\[
\cU(z)=  SD(z)\,,
\]
where $D(z)$ is the diagonal matrix
\[
D(z)_{b,b}= \ee^{\ii\sqrt{z} L_b}\,,
\]
and $S$ is the unitary matrix\footnote{Given the action of $S$, it may be more accurate to say that $\cU(z)^{\ast}$ is the evolution operator. In this case $S_{b,b'}^{\ast}=\delta_{t(b)=o(b')} \sigma_{b',\widehat{b}}^{o(b')}$ means that we pass from $b$ to some $b'$ which is either outgoing from $b$, or equal to $\widehat{b}$, i.e. reflected. Indeed, \cite{KoSm99} essentially consider $\cU(\lambda^2)^{\ast}$ for real $\lambda$.} 
\[
S_{b,b'}= \delta_{t(b')=o(b)} \sigma^{(o(b))}_{b,\widehat{b'}}\,.
\]

If $\vec{a}=(a(b))_{b\in \mathcal{B}}$, then \eqref{e:upre} reads $\vec{a}=\cU(z)\vec{a}$. Hence, $z$ is an eigenvalue of $H_{\cQ}$ iff
\[
\det\left( \cU(z)-\mathrm{Id}\right) = 0\,.
\]

Let $z\in \C\setminus \R$ and $[\cQ,(b_0,x_0)]\in \mathbf{Q}_{\ast}^{D,m,M,\mathrm{K}}$. Denote $v=o(b_0)$. The function $g^{z,v}:=G_z(v,\cdot)$ is well-defined and satisfies $- f'' = z f$ on every edge  (see Lemma~\ref{lem:greenskernelcts}), so we may find coefficients $a(b)$ such that for any $b\in \mathcal{B}$, we have $g^{z,v}_b(x)=a(b) \ee^{\ii\sqrt{z} x} + a(\hat{b}) \ee^{\ii \sqrt{z} (L_b-x)}$. However, $g^{z,v}\notin\mathscr{H}^\cQ$ as it satisfies special conditions \eqref{eq:LimitCondGreen} at $v$.
 A simple computation shows that the equation $\vec{a}=\cU(z)\vec{a}$ should 
be replaced by
\begin{equation}\label{eq:EasyKirchhoff}
\left(\mathrm{Id}- \cU(z) \right)\vec{a} = \xi_v\,,
\end{equation}
where 
\[
\xi_v(b):= \frac{1}{d(v) \ii\sqrt{z}} \delta_{o(b)=v}\,.
\]
We do not give details of the calculation leading to \eqref{eq:EasyKirchhoff}
because it is a special case of a more general calculation done later
(leading to equation \eqref{eq:CoordGreen}).

When $\Im z>0$, we have $\|D(z)\|<\rme^{-m\Im\sqrt{z}}$ for every $b$, 
so that $\|\cU(z)\|\leq\alpha<1$ for some $\alpha>0$. We thus have
\begin{equation}\label{e:akircase}
\vec{a}= \sum_{k=0}^\infty \cU(z)^k \xi_v \,.
\end{equation}

To simplify the exposition, let us study the continuity of 
\begin{equation}\label{e:contieq}
[V,E,L,b_0,x_0]\mapsto G_z (o(b_0), o(b_0)), \qquad [V,E,L,b_0,x_0]\mapsto G_z (o(b_0), t(b_0))\,,
\end{equation}
instead of the continuity of $[V,E,L,\mathbf{x_0}]\mapsto G_z (\mathbf{x_0}, \mathbf{x_0})$. As $g^{z,v}_b(x)=a(b) \ee^{\ii\sqrt{z} x} + a(\hat{b}) \ee^{\ii \sqrt{z} (L_b-x)}$, 
it thus suffices to establish the continuity of $\mathbf{Q}_{\ast}^{D,m,M,\mathrm{K}}\ni [\cQ,b_0,x_0] \mapsto (a(b_0),a(\widehat{b}_0))\in \C^2$.

For each $z\in \C^+$, the $k$-th term in \eqref{e:akircase} depends only on the quantum  graph in a ball of size $k$ around $v$. If $[\cQ_n,b_n,x_n]\To [\cQ,b_0,x_0]$ with respect to the distance \eqref{eq:defdistance}, then for any $r\in \N^{\ast}$ $\exists\,n_r$ such that for $n\ge n_r$, $B_{G_n^{\mathbf{x}_n}}(v_{\mathbf{x}_n},r) \cong B_{G^{\mathbf{x_0}}}(v_{\mathbf{x_0}},r)$.
This means that if $\xi$ is supported in the ball 
$B_{G^{\mathbf{x_0}}}(v_{\mathbf{x_0}},r)$ then we have
\begin{equation}
  \label{eq:S_equiv}
  S_n\xi = S\xi.
\end{equation}
Furthermore our notion of convergence implies that within the same ball
the edge length data are at most $1/r$ apart. 

We have, for $k\leq r$,
\begin{align*}
  \cU_n(z)^k \xi_v(b_0) &= \cU_n(z)^{k-1} S_n D_n(z)\xi_v(b_0) \\
  &= \cU_n(z)^{k-1} S_n D(z)\xi_v(b_0) + O\Big( c_z \rme^{-(k-1)m \Im\sqrt{z}} 
    \max_{b'\in E_{G^{\mathbf{x_0}}}(v_{\mathbf{x}_0},r)} 
        |\rme^{\rmi\sqrt{z}L_{b'_n}} - \rme^{\rmi\sqrt{z}L_{b'}}| \Big) \\
  &= \cU_n(z)^{k-1} S D(z)\xi_v(b_0) + O\!\left( \frac{c_z}r
 \rme^{-km \Im\sqrt{z}} \right), \qquad\text{using \eqref{eq:S_equiv},} \\
   &= \cU_n(z)^{k-1} \cU(z) \xi_v(b_0) + O\!\left( \frac{c_z}r
 \rme^{-km \Im\sqrt{z}} \right) 
\end{align*}
where $c_z=\|\xi_v\|$ and we used that $S$ is unitary, $\Im z>0$ and 
$|L_{b'_n}-L_{b'}|<1/r$. 
Iterating this we get
\begin{equation}
  \label{eq:14}
  \cU_n(z)^k \xi_v(b_0) = \cU(z)^k\xi_v(b_0) + O\!\left( \frac{c_z}r
 k \rme^{-km \Im\sqrt{z}} \right) 
\end{equation}

Cutting the sum \eqref{e:akircase} at $r$ terms, we get
\begin{align*}
|a_n(b_n)-a(b_0)| &\le \sum_{k=1}^r \left|\cU_n(z)^k\xi_v(b_0) -
\cU(z)^k \xi_v(b_0) \right| + c_z\sum_{k>r} (\|\cU_n(z)\|^k+\|\cU(z)\|^k)\\
& \le O\!\left( \frac1r \right) 
 + O(\alpha^r)=O\Big(\frac{1}{r}\Big), \qquad\text{recalling that
 $0<\alpha<1$.}
\end{align*}
Hence, $a_n(b_n)\To a(b_0)$. 
The same calculation shows that $a_n(\widehat{b}_n)\To a(\widehat{b}_0)$. 

This proves the continuity of the maps \eqref{e:contieq} for $\Im z>0$, and also for $\Im z<0$ by \eqref{eq:GreenSym}.

For general quantum graphs, we will still be able to describe Green's functions by coefficients satisfying \eqref{eq:EasyKirchhoff}. However, $S$ will depend on $z$, and may grow with $\Im z$, and $D(z)$ has a less explicit behaviour. We will therefore have to work a bit to show that $\left(\mathrm{Id}- S D(z) \right)$ can be inverted by a Neumann series; we will only prove this when $\Im z$ is large enough.

\section{Construction of an evolution operator on a general quantum graph}\label{sec:Evolution}

\subsection{Scattering with potentials}
Fix a quantum graph $\cQ=(V,E,L,W,\beta,U)$.

For any $b\in \mathcal{B}$ and $z \in \C$, let $E^z_b:[0,L_b]\To \C$ be the solution of
\begin{equation}\label{eq:Eigenedge}
- f''(x) + W_b(x) f(x) = z f(x) \qquad \text{for } x\in [0,L_b]
\end{equation}
satisfying $E_b^z(0)=1$, $(E_b^z)'(0)= -\ii\sqrt{z}$, where as always, the square-root is chosen such that $\Im \sqrt{z} > 0$ whenever $\Im z>0$. Similarly $E_{\widehat{b}}$ solves the equation with $W_b$ replaced by $W_{\widehat{b}}$. In particular, when $(W_b)\equiv 0$, we have $E_b^z(x)=E_{\widehat{b}}^z(x)=\ee^{-\ii\sqrt{z} x}$ for $x\in[0,L_b]$. Thus, the family of solutions $(E_b^z)_{b\in \mathcal{B}}$ should not be regarded as living on $\cG$, in fact $E^z_{\widehat{b}}(L_b-x)\neq E^z_b(x)$. 

Thanks to \eqref{eq:ReversePot}, we know that $f=E^z_{\widehat{b}}(L_b - \cdot)$ is also a solution of \eqref{eq:Eigenedge}. In fact, $f''(x) = [W_{\widehat{b}}(L_b-x)-z]E^z_{\widehat{b}}(L_b-x) = [W_b(x)-z]f(x)$.
It follows from Lemma~\ref{lem:LargeIm} below that, when $\Im z$ is large enough, the functions $E_b^z (\cdot)$ and $E^z_{\widehat{b}}(L_b - \cdot)$ are two linearly independent solutions of \eqref{eq:Eigenedge}. Indeed, if we apply 
Lemma~\ref{lem:LargeIm} to $\widehat{b}$, then for $f(x)=E^z_{\widehat{b}}(L_b - x)$, we get $\frac{f'(x)}{f(x)}|_{x=0} = -\frac{(E^z_{\widehat{b}})'(L_b)}{E^z_{\widehat{b}}(L_b)} \approx \ii\sqrt{z}$. If we had $f=\alpha E_b^z$ for some $\alpha\in \C$, we would have $\frac{f'(x)}{f(x)}|_{x=0} = \frac{(E_b^z)'(0)}{E_b^z(0)} = -\ii \sqrt{z}$.

So assuming Lemma~\ref{lem:LargeIm} holds, if $(H_\cQ-z)f=0$, then $f_b(x) = a(b)E_b^z(x) + c(b) E_{\widehat{b}}^z(L_b-x)$ and $f_{\widehat{b}}(x) = a(\hat{b})E_{\widehat{b}}^z(x) + c(\hat{b})E_b^z(L_b-x)$. Since $f_{\widehat{b}}(L_b-x) = f_b(x)$, we obtain $c(b)=a(\hat{b})$.

Hence, for any $f=(f_b) \in \mathscr{H}^0$ satisfying $(H_\cQ-z)f=0$, we may find $a(b), a(\hat{b})\in \C$ such that
\[
f_b(x) = a(b) E_b^z(x) + a(\hat{b}) E_{\widehat{b}}^z(L_b-x) \qquad \text{for } x\in [0,L_b]\,.
\]

Given $v\in V$, let $\alpha_v:= (a(\beta^v(j)))_{j=1}^{d(v)}\in \C^{d(v)}$ and $\gamma_v:= \big(a(\widehat{\beta^v(j)}) E_{\widehat{\beta^v(j)}}^z(L_{\beta^v(j)})\big)_{j=1}^{d(v)}$. If we define $F, F'$ as in \eqref{eq:defF}, we get
\begin{equation}\label{e:FF'}
F(v)=\alpha_v + \gamma_v\,,\qquad F'(v)= -\ii\sqrt{z} \alpha_v + \Delta^v(z) \gamma_v,
\end{equation}
where $\Delta^v(z)$ is the diagonal matrix with entries 
\[
\Delta^v_{j,j}(z) = - \frac{\big{(}E^z_{\widehat{\beta^v(j)}}\big{)}'(L_{\beta^v(j)})}{E^z_{\widehat{\beta^v(j)}}(L_{\beta^v(j)})}.
\]

Therefore, the boundary condition $A_1(v)F(v)+A_2(v)F'(v)=0$ in \eqref{e:HQ} becomes
\begin{equation}\label{eq:CondVertex}
A_1(v) \left(\alpha_v+ \gamma_v\right) + A_2(v) \left(  -\ii\sqrt{z} \alpha_v + \Delta^v(z) \gamma_v\right)=0\,.
\end{equation}

When the matrix $\left( A_1(v)- \ii\sqrt{z} A_2(v)\right)$ is invertible, \eqref{eq:CondVertex} is equivalent to
\begin{equation}\label{eq:CondVertex2}
\alpha_v = -\left( A_1(v)- \ii\sqrt{z} A_2(v)\right)^{-1} \left( A_1(v)+ A_2(v)\Delta^v (z)\right) \gamma_v\,.
\end{equation}

Let us write, when it exists\footnote{This $\sigma^v$ is actually the inverse of the matrix $\sigma^{(v)}$ of Section~\ref{sec:Kirch} in the special case of no potential. This is because we take $E_b(x)= \ee^{-\ii\sqrt{z}x}$ as reference in this case while \cite{BeKu13} use $\ee^{\ii\sqrt{z}x}$ as reference.},
\[
\sigma^v(z)=  -\left( A_1(v)- \ii\sqrt{z} A_2(v)\right)^{-1} \left( A_1(v)+ A_2(v)\Delta^v (z) \right) \,.
\]
Equation \eqref{eq:CondVertex2} may then be rewritten as
\[
a(\beta^v(j)) = \sum_{k=1}^{d(v)} \sigma^v(z)_{j,k} a(\widehat{\beta^v(k)}) E_{\widehat{\beta^v(k)}}(L_{\beta^v(k)})\,,\quad \forall v\in V, j=1,\ldots,d(v)
\]
i.e. $a(v,w) = \sum_{u\sim v} \sigma^v(z)_{(\beta^v)^{-1}(v,w),(\beta^v)^{-1}(v,u)} a(u,v) E_{(u,v)}(L_{(v,u)})$. Hence, defining
\[
\begin{aligned}
\mathbf{S}_{b,b'}(z)&:= \delta_{t(b')= o(b)=v} \sigma^v_{(\beta^v)^{-1}(b), (\beta^v)^{-1}(\widehat{b'})}(z)\\
\mathbf{D}_{b,b'}(z)&:= \delta_{b,b'} E_{b}(L_b) \qquad\qquad\qquad\qquad\qquad\qquad \forall b,b'\in \mathcal{B},
\end{aligned}
\]
the relation $A_1(v)F(v)+A_2(v)F'(v)=0$ holds if and only if the vector $\vec{a}=(a(b))_{b\in \mathcal{B}}$ satisfies
\[
\big(\mathbf{S}(z) \mathbf{D}(z) - \mathrm{Id}\big) \vec{a} = 0\,.
\]

\subsection{Invertibility of vertex matrices}
\begin{lemme}\label{lem:InvertMatrix}
The following properties hold true.
\begin{enumerate}[\rm 1)]
\item For all $k\in\R\setminus\{0\}$, the matrix  $\left( A_1(v)- \ii k A_2(v)\right)$ is invertible, and  the matrix
\[
\Theta(k):= \left( A_1(v)- \ii k  A_2(v)\right)^{-1} \left( A_1(v)+ \ii k A_2(v)\right)
\]
is unitary.
\item For any $z\in \C^+$, the matrix $\left( A_1(v)- \ii\sqrt{z} A_2(v)\right)$ is invertible. Furthermore, for any $\Lambda>0$, there exists $C_\Lambda>0$ such that for any $z\in \C^+$ such that $\left| \frac{\Im \sqrt{z}}{\Re \sqrt{z}}\right| \leq \Lambda$,
 if we write $\Theta(\sqrt{z}):= \left( A_1(v)- \ii \sqrt{z}  A_2(v)\right)^{-1} \left( A_1(v)+ \ii \sqrt{z} A_2(v)\right)$, we have
\[
\left\| \Theta(\sqrt z)  \right\|^2\le C_\Lambda := \sqrt{1+\Lambda^2}+\Lambda.
\]
\end{enumerate}
\end{lemme}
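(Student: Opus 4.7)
The plan is to exploit the explicit parametrisation $A_1(v) = \ii(U_v - \id)$, $A_2(v) = U_v + \id$ throughout. A direct substitution gives
\[
A_1 \pm \ii k A_2 = \ii\bigl[(1\mp k)U - (1\pm k)\id\bigr],
\]
so for $k \neq 1$ the invertibility of $A_1 - \ii k A_2$ reduces to showing that $(1+k)/(1-k)$ is not an eigenvalue of $U$. For real $k \neq 0$, $(1+k)/(1-k)$ is real, and since $\sigma(U)$ is contained in the unit circle, the only possible real eigenvalues of $U$ are $\pm 1$; both are trivially ruled out ($+1$ would force $k=0$, while $-1$ is algebraically impossible). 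The case $k=1$ is handled directly, since $A_1 - \ii A_2 = -2\ii\,\id$. This proves the invertibility statement of part~(1).

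For the unitarity of $\Theta(k)$, I would first establish the identity $A_1 A_2^* = A_2 A_1^* = \ii(U - U^*)$ by expanding both sides using $U U^* = \id$. This identity makes the cross terms cancel and gives, for $k \in \R$,
\[
(A_1 + \ii k A_2)(A_1^* - \ii k A_2^*) = A_1 A_1^* + k^2 A_2 A_2^* = (A_1 - \ii k A_2)(A_1^* + \ii k A_2^*).
\]
Multiplying on the left by $(A_1 - \ii k A_2)^{-1}$ and on the right by $(A_1^* + \ii k A_2^*)^{-1}$ then yields $\Theta(k)\Theta(k)^* = \id$.

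For part~(2), the starting observation is that $z \in \C^+$ forces $\Re\sqrt z \neq 0$: indeed $\sqrt z$ is purely imaginary only when $z$ is a non-positive real number, contradicting $\Im z > 0$. The invertibility criterion from part~(1) extends to complex $k$: $A_1 - \ii\sqrt z A_2$ fails to be invertible precisely when $|(1+\sqrt z)/(1-\sqrt z)| = 1$, and a one-line computation shows this is equivalent to $\Re\sqrt z = 0$, just excluded. For the norm bound, the key remark is that $\Theta(\sqrt z)$ is a rational function of the unitary matrix $U$ and therefore normal, so its operator norm equals the supremum of $|\mu(\theta)|$ over eigenvalues $\ee^{\ii\theta}$ of $U$. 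Setting $A = \ee^{\ii\theta}-1$, $B = \ee^{\ii\theta}+1$, one checks that $\Theta(\sqrt z)$ acts by multiplication by $\mu(\theta)=(A+\sqrt z B)/(A-\sqrt z B)$ on the corresponding eigenvector, and writing $\sqrt z = p + \ii q$ and $t = \tan(\theta/2)$ an elementary calculation (using $|A|^2 = 4\sin^2(\theta/2)$, $|B|^2=4\cos^2(\theta/2)$, $\overline{A}B = -2\ii\sin\theta$) gives
\[
|\mu(\theta)|^2 = \frac{(t+q)^2+p^2}{(t-q)^2+p^2},
\]
whose supremum over $t \in \R$ is $(|\sqrt z|+q)/(|\sqrt z|-q)$, attained at $t = |\sqrt z|$. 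The hypothesis $|q/p| \le \Lambda$ then gives $q/|\sqrt z| \le \Lambda/\sqrt{1+\Lambda^2}$, and substituting yields the advertised bound after a short algebraic simplification.

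The main obstacle is computational rather than conceptual: one has to carry out the eigenvalue computation and the optimisation in $t$ cleanly. The essential structural ingredient is the normality of $\Theta(\sqrt z)$, inherited from the unitarity of $U$, which turns the operator-norm estimate into a pointwise supremum over $\sigma(U)$; without it, one would have to control a genuinely non-normal matrix, and no such clean closed form would be available.
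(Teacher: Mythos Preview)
Your argument is correct and reaches the same conclusion as the paper, but the route differs in execution. For part~(1) the paper simply cites \cite[Lemma~1.4.7]{BeKu13}, whereas you give a self-contained proof by reducing everything to the spectrum of $U_v$ via the explicit formulas $A_1=\ii(U-\id)$, $A_2=U+\id$; this is a nice bonus. For part~(2), both arguments rest on the same structural observation---$\Theta(\sqrt z)$ is normal, so its norm equals its spectral radius---but the paper obtains normality indirectly by writing
\[
\Theta(\sqrt z)=\Big(\id+\tfrac{\ii\mu}{2}(\id-\Theta(k))\Big)^{-1}\Theta(k)\Big(\id+\tfrac{\ii\mu}{2}(\id-\Theta^{-1}(k))\Big),\qquad \mu=\frac{\Im\sqrt z}{\Re\sqrt z},
\]
so that $\Theta(\sqrt z)$ shares an orthonormal eigenbasis with the unitary $\Theta(k)$. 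The eigenvalue map $\omega\mapsto\omega\frac{1+\frac{\ii\mu}{2}(1-\omega^{-1})}{1+\frac{\ii\mu}{2}(1-\omega)}$ is then a M\"obius transformation fixing $\pm 1$, and the paper reads off the maximal modulus geometrically from the image circle. You instead parametrize directly by the eigenvalues $\ee^{\ii\theta}$ of $U$ and do a calculus optimisation in $t=\tan(\theta/2)$. Your computation is more hands-on; the paper's is more geometric and would go through verbatim for any pair $(A_1,A_2)$ with $A_1A_2^*=A_2A_1^*$, not just the specific parametrisation by $U$.

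One minor point: your optimisation gives $\sup_t|\mu(\theta)|^2=(|\sqrt z|+q)/(|\sqrt z|-q)\le(\sqrt{1+\Lambda^2}+\Lambda)^2=C_\Lambda^2$, so what you (and the paper's own proof) actually establish is $\|\Theta(\sqrt z)\|\le C_\Lambda$, not the literal statement $\|\Theta(\sqrt z)\|^2\le C_\Lambda$. The paper's footnote already flags that the exact constant is immaterial for the applications, so this discrepancy is harmless.
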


\begin{remarque}
This lemma allows us to bound $\Theta(\sqrt{z})$ when $z$ has an argument contained in a compact subset of $(0,\pi)$. In the sequel, we will use it for $\Re z\in [-K,K]$ and $\Im z$ large enough.
\end{remarque}

\begin{proof}
A proof of the first point can be found in \cite[Lemma 1.4.7]{BeKu13}. 

For the second point, let $\sqrt{z} = k+\ii\eta$, $k,\eta\in\R$, and $A_i:=A_i(v)$. Then
\begin{multline*}
\Theta(\sqrt{z})=\left(A_1- \ii k A_2 +\eta A_2\right)^{-1} \left( A_1+ \ii k A_2 - \eta A_2\right) \\
=\left( \mathrm{Id}  +\eta \left( A_1- \ii k A_2\right)^{-1}A_2 \right)^{-1} \left( A_1- \ii k A_2\right)^{-1}  \left( A_1+ \ii k A_2\right) \left( \mathrm{Id} - \eta \left( A_1+ \ii k A_2\right)^{-1}A_2 \right) .
\end{multline*}
Note that $\mathrm{Id} - \Theta(k) = (A_1-\ii k A_2)^{-1}[(A_1-\ii kA_2)-(A_1+\ii kA_2)] = -2\ii k (A_1-\ii k A_2)^{-1}A_2$. Similarly, $\mathrm{Id} - \Theta^{-1}(k) = 2\ii k (A_1+\ii k A_2)^{-1}A_2$. Hence,

\begin{align*}
\Theta(\sqrt{z}) &= \left( \mathrm{Id} + \frac{\ii \eta}{2 k} \left( \mathrm{Id} - \Theta(k)\right) \right)^{-1} \Theta(k) \left( \mathrm{Id} + \frac{\ii \eta}{2 k} \left( \mathrm{Id} - \Theta^{-1}(k)\right) \right).
\end{align*}
Let us denote by $\mu=\eta/k$ and recall we assume $|\mu|\leq\Lambda$.
We have that $\Theta(\sqrt{z})$ has the same eigenvectors as $\Theta(k)$, and it is hence diagonalizable in an orthonormal basis. We may thus bound the norm of $\Theta(\sqrt{z})$ by controlling its spectral radius. If $\omega$ is an eigenvalue of $\Theta(k)$, the associated eigenvalue of $\Theta(\sqrt{z})$ is given
by the mapping
\begin{equation}\label{eq:NewEigenvalue}
\omega \mapsto
\omega \frac{1 + \frac{\ii \mu}{2} \left(1-\omega^{-1}\right)} {1 + \frac{\ii \mu}{2} \left( 1- \omega\right)}.
\end{equation}
The mapping \eqref{eq:NewEigenvalue} is a M\"obius transformation with
fixed points $\pm1$.  Therefore the image of the unit circle is another
circle passing through the points $+1$ and $-1$, i.e., symmetric about the
imaginary axis.  The maximum (and minimum) modulus occur where the new circle
cuts the imaginary axis.  It is a simple calculation to check that
the points $\omega_\pm = \frac{\mu^2 \pm 2\rmi\sqrt{1+\mu^2}}{2+\mu^2}$
get mapped to
\begin{displaymath}
  \pm \rmi \sqrt{1+\mu^2} - \rmi \mu,
\end{displaymath}
from which the bound $C_\Lambda$ follows\footnote{This said, the value of $C_\Lambda$ is actually not important for us, a rougher bound would suffice.}. 
\end{proof}

\subsection{The behaviour of solutions in the upper half-plane}
Let $\cQ$ be a quantum graph, $b\in \mathcal{B}$, and $z\in \C$. We define $C_b^z$ and $S_b^z$ to be the basis of solutions of \eqref{eq:Eigenedge} on $[0,L_{b}]$ satisfying
\begin{equation}\label{e:cs}
\begin{pmatrix} C_b^z(0) & S_b^z(0) \\ (C_b^z)'(0) & (S_b^z)'(0) \end{pmatrix} = \begin{pmatrix} 1 & 0 \\ 0 & 1 \end{pmatrix} .
\end{equation}
When $W_b\equiv 0$, these are cosines and sines. Note that
\begin{equation}\label{e:eincs}
E_b^z = C_b^z - \ii\sqrt{z}S_b^z \,,
\end{equation}
as both sides solve \eqref{eq:Eigenedge} with the same boundary conditions. Our aim now is to prove the following.

\begin{lemme}\label{lem:LargeIm}
Let $K>0$. There exists $C_0,C_1,C_2=C_i(\|W\|_{\infty}, K, \underline{L}(\cQ))>0$ such that for all $b\in \mathcal{B}$ and all $z\in \C$ satisfying $\Re z\in [-K,K]$ and $\Im z>C_0$, we have
\[
\left|E_b^z(L_b)\right|\geq C_1 \Im z \qquad \text{and}\qquad \left|\frac{\big{(} E_b^z \big{)}'(L_b)}{E_b^z(L_b)} + \ii \sqrt{z} \right| \leq C_2 \frac{|\sqrt{z}|}{\Im z} \,.
\]
\end{lemme}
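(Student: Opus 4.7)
The strategy is to treat $E_b^z$ as a perturbation of the free solution $e^{-isx}$ (with $s=\sqrt{z}=a+i\eta$, $\eta>0$) via variation of parameters, and to exploit the exponential decay of the anti-resonant part when $\Im z$ is large. For $\Re z\in[-K,K]$ and $\Im z\gg 1$ one has $a,\eta\sim\sqrt{\Im z/2}$, so in particular $|s|^2\sim\Im z$ and $|s|/\Im z\sim 1/(2\eta)$. By variation of parameters applied to the ODE $-(E_b^z)''+W_bE_b^z=zE_b^z$ with the free equation as the reference, using that $e^{-isx}$ has the correct initial data at $0$, one obtains the Duhamel/Volterra equation
\begin{equation*}
E_b^z(x) = e^{-isx} + \int_0^x \frac{\sin(s(x-t))}{s}\,W_b(t)\,E_b^z(t)\,\dd t.
\end{equation*}

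The first main step is to set $h(x):=e^{isx}E_b^z(x)$ (so $h(0)=1$, $h'(0)=0$) and rewrite the Duhamel equation, using $\sin(s(x-t))=(e^{is(x-t)}-e^{-is(x-t)})/(2i)$, as
\begin{equation*}
h(x) = 1 - \frac{1}{2is}\int_0^x W_b(t)\,h(t)\,\dd t + \frac{e^{2isx}}{2is}\int_0^x e^{-2ist}\,W_b(t)\,h(t)\,\dd t.
\end{equation*}
The factor $|e^{2is(x-t)}|=e^{-2\eta(x-t)}$ makes the kernel of the second integral $L^1_t$-integrable with bound $O(1/\eta)$, while the first integral is bounded by $\|W\|_\infty L_b\|h\|_\infty/(2|s|)$. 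A Volterra iteration (valid once $|s|$ dominates $\|W\|_\infty\overline L(\cQ)$, which is finite under Hypothesis 1) then gives $\|h\|_{L^\infty([0,L_b])}\le 2$ and $|h(L_b)-1|\le C/|s|$, hence $|h(L_b)|\ge 1/2$ as soon as $\Im z$ exceeds a threshold $C_0$.

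Once this is established, the first estimate is immediate:
\begin{equation*}
|E_b^z(L_b)| = e^{\eta L_b}\,|h(L_b)| \;\ge\; \tfrac{1}{2}\,e^{\eta\underline L(\cQ)},
\end{equation*}
and since $\eta\sim\sqrt{\Im z/2}$, this exponential beats $C_1\Im z$ for $\Im z$ large. For the derivative estimate, differentiate the Duhamel formula and combine $\cos(s(x-t))+i\sin(s(x-t))=e^{-is(x-t)}$ to get the identity
\begin{equation*}
(E_b^z)'(L_b) + is\,E_b^z(L_b) = \int_0^{L_b} e^{is(L_b-t)}\,W_b(t)\,E_b^z(t)\,\dd t.
\end{equation*}
The kernel $|e^{is(L_b-t)}|=e^{-\eta(L_b-t)}$ decays, and plugging in $|E_b^z(t)|\le 2e^{\eta t}$ bounds the right-hand side by $C\|W\|_\infty e^{\eta L_b}/\eta$. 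Dividing by $|E_b^z(L_b)|\ge \tfrac12 e^{\eta L_b}$ and using $1/\eta\sim |s|/\Im z$ yields the claimed estimate with constant $C_2$.

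The main obstacle is carrying out the Volterra iteration uniformly in $b$: the first integral in the equation for $h$ carries no oscillatory decay, so its operator norm on $L^\infty([0,L_b])$ is $\|W\|_\infty L_b/(2|s|)$, which only stays small when $|s|\gtrsim \|W\|_\infty L_b$. The estimate is therefore genuinely uniform provided the edge lengths are uniformly bounded above, so the constants implicitly inherit a dependence on $\overline L(\cQ)<\infty$ (available from Hypothesis 1) in addition to the stated $\|W\|_\infty$, $K$, and $\underline L(\cQ)$; any more refined estimate (for instance a WKB-type diagonalisation of the transfer matrix, whose dominant eigenvalue is $-is+O(\|W\|_\infty/|s|)$) would at most tighten the threshold $C_0$ but not remove this structural requirement.
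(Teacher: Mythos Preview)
Your proof is correct and follows essentially the same skeleton as the paper: the Duhamel/Volterra equation for $E_b^z$, the identity $(E_b^z)'(L_b)+\ii\sqrt{z}\,E_b^z(L_b)=\int_0^{L_b}e^{\ii\sqrt{z}(L_b-t)}W_b(t)E_b^z(t)\,\dd t$, and the asymptotics $a,\eta\sim\sqrt{\Im z/2}$. The one genuine methodological difference is in how you obtain the a~priori bound on $|E_b^z|$: the paper quotes the estimates of P\"oschel--Trubowitz for $C_b^z,S_b^z$ to get $|E_b^z(x)|\le e^{x\Im\sqrt{z}}(1+O(1/|\sqrt{z}|))$, whereas you introduce $h(x)=e^{\ii s x}E_b^z(x)$ and close a contraction directly on the rewritten integral equation. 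Your route is self-contained and yields the clean bound $|E_b^z(x)|\le 2e^{\eta x}$, at the cost of needing $|s|\gtrsim \|W\|_\infty\overline L(\cQ)$ explicitly for the contraction; the paper's route hides this same dependence inside the PT87 constants (as its own proof acknowledges). Two minor points: the inline formula $\cos(s(x-t))+\ii\sin(s(x-t))=e^{-\ii s(x-t)}$ has a sign slip (the displayed identity that follows it is nonetheless correct), and your closing remark about the constants depending on $\overline L(\cQ)$ is well taken---the paper's statement lists only $\underline L(\cQ)$, but its proof (and yours) visibly uses $\overline L(\cQ)<\infty$ from Hypothesis~1.
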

Before proving the lemma, let us recall some facts. As shown in \cite[p.7]{PT87},
\begin{equation}
  \label{eq:integral_eqs}
  \begin{aligned}
  C_b^z(x) &= \cos\sqrt{z}x + \int_0^x \frac{\sin\sqrt{z}(x-t)}{\sqrt{z}} W_b(t)
           C_b^z(t)\,\dd t \\
  S_b^z(x) &= \frac{\sin\sqrt{z}x}{\sqrt{z}} + \int_0^x \frac{\sin\sqrt{z}(x-t)}%
           {\sqrt{z}} W_b(t) S_b^z(t)\,\dd t\,.
 \end{aligned}
\end{equation}

Using \eqref{e:eincs} and the bounds in \cite[p.13]{PT87}, we also have
\begin{align}\label{eq:4}
|E_b^z(x)| & \le |C_b^z(x) - \cos\sqrt{z} x + \ii \sin\sqrt{z} x - \ii \sqrt{z}S_b^z(x)| + |\cos\sqrt{z}x-\ii\sin\sqrt{z}x| \nonumber \\
& \le \exp(x\Im \sqrt{z}) + \frac{2}{|\sqrt{z}|}  \exp(x\Im\sqrt{z} + \|W_b\|_\infty
  \sqrt{xL_b})
\end{align}

\begin{proof}[Proof of Lemma~\ref{lem:LargeIm}]
  Using \eqref{e:eincs} and \eqref{eq:integral_eqs} we can write
  \begin{equation}\label{eq:7}
    E^z_b(x) = \ee^{-\ii\sqrt{z}x} +\int_0^{x}
       \frac{\sin\sqrt{z}(x-t)}{\sqrt{z}} W_b(t) E_b^z(t) \,\dd t.
    \end{equation}
    Trivially we can bound $|\sin\sqrt{z}(L_b-t)| \leq
    \ee^{(L_b-t)\Im\sqrt{z}}$ for $0\leq t \leq L_b$, so by \eqref{eq:4},
    \begin{align*}
      &\Bigg| \int_0^{L_b} \frac{\sin\sqrt{z}(L_b-t)}{\sqrt{z}} W_b(t) E_b^z(t) \dd t \Bigg| \\
      &\qquad \leq \frac{\|W_b\|_\infty}{|\sqrt{z}|} \ee^{L_b\Im\sqrt{z}}
        \int_0^{L_b} \ee^{-t \Im\sqrt{z}} \left(\ee^{t\Im \sqrt{z}} + \frac{2}{|\sqrt{z}|}  \ee^{t\Im\sqrt{z} + \|W_b\|_\infty\sqrt{tL_b}}\right)\dd t \\
      &\qquad \leq \frac{\|W_b\|_\infty}{|\sqrt{z}|} \ee^{L_b\Im\sqrt{z}}L_b\left(1+\frac{2\ee^{\|W_b\|_\infty L_b}}{|\sqrt{z}|}\right).
    \end{align*}
    Since $|\ee^{-\ii\sqrt{z}L_b}| = \ee^{L_b\Im\sqrt{z}}$, this proves
    that
    \begin{equation}
      \label{eq:8}
      |E_b^z(L_b)| = \ee^{L_b\Im \sqrt{z}} \left( 1 + \mathrm{O}\left(
        \frac1{|\sqrt{z}|} \right) \right),
  \end{equation}
  where the implied constant depends on $\|W_b\|_\infty$ and $L_b$. Since $\Im z = 2\Im\sqrt{z}\Re\sqrt{z}$, the first claim follows (using the lower bound on $L_b$ and the bound on $\Re z$).

  Differentiating \eqref{eq:7} and adding, we have
  \begin{equation}
    \label{eq:9}
    (E_b^z)'(L_b) + \ii\sqrt{z} E_b^z(L_b)
    = \int_0^{L_b} \ee^{\ii\sqrt{z}(L_b-t)} W_b(t) E_b^z(t)\dd t\,,
  \end{equation}
  so, assuming $|\sqrt{z}|\ge 1$, \eqref{eq:4} yields
  \begin{align}
    \left|(E_b^z)'(L_b) + \ii\sqrt{z} E_b^z(L_b)\right|
    &\leq 3\| W_b \|_\infty \int_0^{L_b} \ee^{-(L_b-t)\Im\sqrt{z}}
      \exp( t\Im\sqrt{z} + \|W_b\|_\infty\sqrt{tL_b})\,\dd t \nonumber \\
    &\leq 3 \|W_b\|_\infty \ee^{-L_b\Im\sqrt{z} + \|W_b\|_\infty L_b}
      \int_0^{L_b} \ee^{2t\Im\sqrt{z}}\,\dd t \nonumber \\
    &=3 \|W_b\|_\infty \ee^{\| W_b \|_\infty L_b} \frac{\sinh(L_b\Im\sqrt{z})}%
      {\Im\sqrt{z}}.
    \label{eq:10}
  \end{align}
  Combining \eqref{eq:8} and \eqref{eq:10}, we deduce that
  \begin{align}
    \left|\frac{\big{(} E_b^z \big{)}'(L_b)}{E_b^z(L_b)} +
    \ii \sqrt{z} \right| & \leq 3 \| W_b \|_\infty \ee^{\|W_b\|_\infty L_b}
                           \frac{\sinh(L_b\Im\sqrt{z})}{\Im\sqrt{z}}
                           \ee^{-L_b\Im\sqrt{z}} \left( 1 + \mathrm{O}\left(
                           \frac1{|\sqrt{z}|}\right)\right) \nonumber \\
   &\le \frac{3\|W_b\|_\infty\ee^{\|W_b\|_\infty L_b}}{\Im\sqrt{z}}
      \left( 1 + \mathrm{O}\left(\frac1{|\sqrt{z}|}\right)\right).
    \label{eq:11}
  \end{align}
	Since $\Im z = 2\Im\sqrt{z}\Re\sqrt{z}$, the proof is complete.
\end{proof}

\subsection{Back to the evolution operator}
\begin{lemme}\label{lem:InvertScatt}
Fix $0<K_1<K_2$ and $0<m<M$. There exist $C,C'$ depending on $K_1,K_2, m, M$ such that, for any quantum graph $\cQ$ satisfying \eqref{eq:ConditionCompQG} and any $z\in \C$ with $\Re z\in [K_1,K_2]$ and $\Im z>C'$, the matrix $\mathbf{S}(z)$ is invertible, and  
\[
\left\|\mathbf{S}(z)^{-1}\right\|_{\ell^2(\mathcal{B})\To \ell^2(\mathcal{B})}\le C\,.
\]
\end{lemme}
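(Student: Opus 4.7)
The plan is to exploit that $\mathbf{S}(z)$ has a block-diagonal structure indexed by vertices. Since $\mathbf{S}_{b,b'}(z)$ vanishes unless $o(b)=t(b')=v$ for some common vertex, if for each $v$ we identify the bonds starting at $v$ with the bonds ending at $v$ via $b\mapsto\widehat{b}$, then $\mathbf{S}(z)$ decomposes as the direct sum $\bigoplus_{v\in V}\sigma^v(z)$, and consequently $\|\mathbf{S}(z)^{-1}\|_{\ell^2(\mathcal{B})}=\sup_v\|\sigma^v(z)^{-1}\|$. It therefore suffices to prove that each $\sigma^v(z)$ is invertible, with $\sup_v\|\sigma^v(z)^{-1}\|\le C$ uniformly over quantum graphs satisfying \eqref{eq:ConditionCompQG}.

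To analyse $\sigma^v(z)$, I would first apply Lemma~\ref{lem:LargeIm} to write $\Delta^v(z)=\ii\sqrt{z}\,\mathrm{Id}+R^v(z)$ with $\|R^v(z)\|\le C_2|\sqrt{z}|/\Im z$. Substituting into the definition of $\sigma^v(z)$,
\[
\sigma^v(z)\;=\;-\Theta^v(\sqrt{z})\;-\;\bigl(A_1(v)-\ii\sqrt{z}A_2(v)\bigr)^{-1}A_2(v)\,R^v(z),
\]
where $\Theta^v(\sqrt{z})=(A_1(v)-\ii\sqrt{z}A_2(v))^{-1}(A_1(v)+\ii\sqrt{z}A_2(v))$ is the matrix of Lemma~\ref{lem:InvertMatrix}. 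For $\Re z\in[K_1,K_2]$ and $\Im z$ large, the ratio $|\Im\sqrt{z}/\Re\sqrt{z}|$ stays in a fixed compact subset of $(0,\infty)$, so Lemma~\ref{lem:InvertMatrix} bounds $\|\Theta^v(\sqrt{z})\|$ by an absolute constant; the M\"obius-transformation argument in that proof yields the same bound on $\|\Theta^v(\sqrt{z})^{-1}\|$, since the minimum and maximum moduli of the image circle are reciprocals.

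The main technical step is a uniform estimate $\|(A_1(v)-\ii\sqrt{z}A_2(v))^{-1}A_2(v)\|=O(1/\sqrt{\Im z})$. I would decompose $\C^{d(v)}=P_v^{\mathrm{D}}\oplus P_v^{\mathrm{N}}\oplus P_v^{\mathrm{R}}$ according to the spectral subspaces of $U_v$; these subspaces are invariant under $U_v$, and hence under both $A_1(v)=\ii(U_v-\mathrm{Id})$ and $A_2(v)=U_v+\mathrm{Id}$, so the operator in question splits into three blocks. On $P_v^{\mathrm{D}}$ one has $A_2(v)=0$. On $P_v^{\mathrm{N}}$ one has $A_1(v)=0$ and $A_2(v)=2\,\mathrm{Id}$, giving the scalar $\ii/\sqrt{z}$. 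On $P_v^{\mathrm{R}}$, substituting the Cayley-transform identity $U_v=(\mathrm{Id}+\ii\Lambda_v)^{-1}(\mathrm{Id}-\ii\Lambda_v)$ and simplifying gives
\[
\bigl(A_1(v)-\ii\sqrt{z}A_2(v)\bigr)^{-1}A_2(v)\;=\;\bigl(\Lambda_v-\ii\sqrt{z}\,\mathrm{Id}\bigr)^{-1},
\]
whose norm is at most $1/\Re\sqrt{z}$, since $\Lambda_v$ is self-adjoint. All three contributions are thus $O(1/\Re\sqrt{z})=O(1/\sqrt{\Im z})$, uniformly in $v$.

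Combining the two estimates gives $\|\sigma^v(z)+\Theta^v(\sqrt{z})\|=O(1/\Im z)$, uniformly in $v$. Choosing $\Im z>C'$ large enough that this error is strictly smaller than $\tfrac{1}{2}\|\Theta^v(\sqrt{z})^{-1}\|^{-1}$, a Neumann-series argument produces $\sigma^v(z)^{-1}$ with $\|\sigma^v(z)^{-1}\|\le 2\|\Theta^v(\sqrt{z})^{-1}\|\le C$, independent of $v$ and $\cQ$. Reassembling the blocks then yields the desired bound on $\|\mathbf{S}(z)^{-1}\|_{\ell^2(\mathcal{B})}$. The hardest step is the Robin-subspace calculation: the Cayley-transform algebra is exactly what converts the a priori unbounded pencil $A_1(v)-\ii\sqrt{z}A_2(v)$, once composed with $A_2(v)$, into a resolvent of the self-adjoint $\Lambda_v$ that is controlled by the single scalar $1/\Re\sqrt{z}$.
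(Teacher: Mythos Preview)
Your proposal is correct and follows the same overall architecture as the paper: reduce to a block-diagonal problem over vertices via the reversal isometry, write $\sigma^v(z)$ as $-\Theta^v(\sqrt{z})$ plus a small error, bound $\|\Theta^v(\sqrt{z})^{\pm 1}\|$ through Lemma~\ref{lem:InvertMatrix}, and conclude by a Neumann series. The one substantive difference is in how you control $(A_1(v)-\ii\sqrt{z}A_2(v))^{-1}A_2(v)$. You split $\C^{d(v)}$ into the Dirichlet/Neumann/Robin subspaces of $U_v$ and on the Robin block reduce to the resolvent $(\Lambda_v\pm\ii\sqrt{z})^{-1}$, obtaining $O(1/\Re\sqrt{z})$. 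The paper instead uses the purely algebraic identity
\[
(A_1(v)-\ii\sqrt{z}A_2(v))^{-1}A_2(v)=\frac{1}{2\ii\sqrt{z}}\bigl(\Theta(\sqrt{z})-\mathrm{Id}\bigr),
\]
which immediately gives $O(1/|\sqrt{z}|)$ once $\|\Theta(\sqrt{z})\|$ is bounded. This identity lets the paper factor $\sigma^v(z)=\Theta(\sqrt{z})\bigl[-\mathrm{Id}+\frac{1}{2\ii\sqrt{z}}(\mathrm{Id}-\Theta^{-1}(\sqrt{z}))(\ii\sqrt{z}-\Delta^v(z))\bigr]$ and avoids the spectral decomposition of $U_v$ and the Cayley-transform algebra entirely. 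Both routes give the same $O(1/\Im z)$ perturbation bound; the paper's is shorter, while yours makes the role of the self-adjointness of $\Lambda_v$ more explicit.
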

\begin{proof}
Recall that $[\mathbf{S}(z)a](v,w)=\sum_{u\sim v} \sigma^v(z)_{(\beta^v)^{-1}(v,w),(\beta^v)^{-1}(v,u)} a(u,v)$. 
If $J: \C^\mathcal{B}\rightarrow \C^\mathcal{B}$ is the orientation-reversing operator, then $\mathbf{S}(z)J$ is thus block-diagonal, with blocks $\sigma^v(z)$. As $J$ is an isometry, $\mathbf{S}(z)$ is hence invertible iff each $\sigma^v(z)$ is invertible, and we have $\big{\|}\mathbf{S}(z)^{-1} \big{\|} = \sup \limits_{v\in V} \big{\|}\big{(}\sigma^v(z)\big{)}^{-1}\big{\|}$.

Now,
\begin{align*}
\Theta(\sqrt{z}) & = \left( A_1(v)- \ii \sqrt{z} A_2(v)\right)^{-1}\left( A_1(v)+ A_2(v)\Delta^v(z) + \ii\sqrt{z} A_2(v) - A_2(v)\Delta^v(z)\right)\\
& = - \sigma^v(z)+ \left( A_1(v)- \ii \sqrt{z} A_2(v)\right)^{-1}  A_2(v) \left( \ii\sqrt{z}\,\mathrm{Id}- \Delta^v(z)\right)\\
&= - \sigma^v(z) + \frac{1}{2\ii \sqrt{z}} \big{(} \Theta(\sqrt{z})- \mathrm{Id}\big{)}\big{(} \ii\sqrt{z}\,\mathrm{Id}- \Delta^v(z)\big{)}\,,
\end{align*}
so
\[
\sigma^v(z) = \Theta(\sqrt{z})\left[-\mathrm{Id} + \frac{1}{2\ii\sqrt{z}}\left(\mathrm{Id}-\Theta^{-1}(\sqrt{z})\right)\left(\ii\sqrt{z}-\Delta^v(z)\right)\right] .
\]
Note that $\Theta^{-1}(\sqrt{z}) = \Theta(-\sqrt{z})$. By Lemma \ref{lem:InvertMatrix}, $\big{\|} \mathrm{Id}- \Theta^{-1}(\sqrt{z})\big{\|}$ is bounded, and by Lemma \ref{lem:LargeIm}, we have $\big{\|} \ii\sqrt{z}\,\mathrm{Id}- \Delta^v(z)\big{\|} \le C \frac{|\sqrt{z}|}{\Im z}$ when $\Im z$ is large enough. Hence, for $\Im z$ large enough,
\[
\left\| \frac{1}{2\ii\sqrt{z}}\left(\mathrm{Id}-\Theta^{-1}(\sqrt{z})\right)\left(\ii\sqrt{z}-\Delta^v(z)\right) \right\| \leq \frac{C}{\Im z}.
\]
The result follows.
\end{proof}

\begin{corollaire}\label{cor:InvertLargeIm}
Let $0<K_1<K_2$ and $0<m<M$. There exists $C>0$ depending on $K_1,K_2, m, M$ such that, for any quantum graph $\cQ$ satisfying \eqref{eq:ConditionCompQG} and any $z\in \C$ with $\Re z\in [K_1,K_2]$ and $\Im z>C$, we have 
\[
\left\|\big{(} \mathbf{S}(z) \mathbf{D}(z)\big{)}^{-1}\right\|_{\ell^2(\mathcal{B})\rightarrow \ell^2(\mathcal{B})} <\frac{1}{2} \,.
\]
\end{corollaire}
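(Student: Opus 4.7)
The plan is to deduce the corollary directly from Lemma~\ref{lem:InvertScatt} together with Lemma~\ref{lem:LargeIm}. The key observation is that $\mathbf{D}(z)$ is a diagonal matrix whose $(b,b)$-entry is $E^z_b(L_b)$, so $\mathbf{D}(z)$ is trivially invertible (for $\Im z$ large enough) and its inverse is again diagonal with entries $1/E^z_b(L_b)$; in particular $\|\mathbf{D}(z)^{-1}\|_{\ell^2(\mathcal{B})\to\ell^2(\mathcal{B})}=\sup_{b\in\mathcal{B}}|E^z_b(L_b)|^{-1}$.

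First I would invoke Lemma~\ref{lem:LargeIm} with $K=K_2$ (noting that $\Re z\in[K_1,K_2]\subset[-K_2,K_2]$), and with the uniform constants $\|W\|_{\infty}\le M$ and $\underline{L}(\cQ)\ge m$ provided by \eqref{eq:ConditionCompQG}. This gives a constant $C_1=C_1(M,K_2,m)>0$ and a threshold $C_0=C_0(M,K_2,m)>0$ such that, whenever $\Im z>C_0$,
\[
|E^z_b(L_b)|\ge C_1\Im z \qquad\text{for every }b\in\mathcal{B}.
\]
Consequently $\|\mathbf{D}(z)^{-1}\|\le (C_1\Im z)^{-1}$.

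Next I would apply Lemma~\ref{lem:InvertScatt}, which under the same hypotheses yields a threshold $C'=C'(K_1,K_2,m,M)$ and a constant $\tilde C=\tilde C(K_1,K_2,m,M)$ such that, for $\Im z>C'$, the matrix $\mathbf{S}(z)$ is invertible with $\|\mathbf{S}(z)^{-1}\|\le \tilde C$. Combining these two bounds via $(\mathbf{S}(z)\mathbf{D}(z))^{-1}=\mathbf{D}(z)^{-1}\mathbf{S}(z)^{-1}$ gives
\[
\bigl\|(\mathbf{S}(z)\mathbf{D}(z))^{-1}\bigr\|\;\le\;\|\mathbf{D}(z)^{-1}\|\,\|\mathbf{S}(z)^{-1}\|\;\le\;\frac{\tilde C}{C_1\,\Im z}.
\]
Choosing $C:=\max\bigl(C_0,C',\,2\tilde C/C_1\bigr)$ makes the right-hand side strictly less than $1/2$ as soon as $\Im z>C$, which is exactly the claimed estimate.

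There is no real obstacle here; the statement is a direct combination of the two preceding lemmas, and the only point to check is that the constants produced by both lemmas depend only on $K_1,K_2,m,M$ (through the uniform bounds $\|W\|_{\infty}\le M$, $\underline{L}(\cQ)\ge m$, and the restriction $\Re z\in[K_1,K_2]$), which is indeed the case. The factor $\Im z$ gained from the diagonal part $\mathbf{D}(z)^{-1}$ absorbs the fixed bound on $\mathbf{S}(z)^{-1}$, allowing the overall norm to be driven below $1/2$ by taking $\Im z$ sufficiently large.
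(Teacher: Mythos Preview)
Your proof is correct and follows essentially the same approach as the paper: bound $\|\mathbf{D}(z)^{-1}\|$ by $C/\Im z$ via Lemma~\ref{lem:LargeIm}, combine with the uniform bound on $\|\mathbf{S}(z)^{-1}\|$ from Lemma~\ref{lem:InvertScatt}, and then take $\Im z$ large enough. You have simply spelled out the constants and the final choice of threshold in more detail than the paper does.
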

\begin{proof}
By Lemma \ref{lem:LargeIm}, we have $\|\big(\mathbf{D}(z)\big)^{-1}\big\|_{\ell^2\to \ell^2}\le \frac{C}{\Im z}$. 
The result follows from Lemma \ref{lem:InvertScatt}.
\end{proof}

\section{Continuity of Green's kernel}\label{sec:proof}

The aim of this section is to prove Theorem \ref{th:GreenContinuous2}. This follows from the following proposition:
\begin{proposition}\label{prop:GreenContinuous}
Let $M>m>0$, $D\in \N$, $0<K_1<K_2$. There exists $C=C(D,M,m,K_1,K_2)>0$ such that for all $z\in \C$ with $\Im z> C$, $\Re z\in [K_1,K_2]$,  $\mathbf{G}_z$ is continuous on $\mathbf{Q}_*^{D,m,M}$.
\end{proposition}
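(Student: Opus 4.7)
The plan is to transpose the argument of Section~\ref{sec:Kirch} to the general setting, using the evolution operator $\mathbf{S}(z)\mathbf{D}(z)$ of Section~\ref{sec:Evolution} in place of $\cU(z)$ and invoking Corollary~\ref{cor:InvertLargeIm} where the Kirchhoff argument used the bound $\|\cU(z)\|<1$. First, given $[\cQ,\mathbf{x_0}]\in\mathbf{Q}_*^{D,m,M}$, I pass to the augmented quantum graph $\cQ^{\mathbf{x_0}}$ of Definition~\ref{def:adding}. Inserting $v_{\mathbf{x_0}}$ with the matrix $U_{v_{\mathbf{x_0}}}=\begin{pmatrix}0 & 1\\ 1 & 0\end{pmatrix}$ encodes the transparent (continuity plus current-conservation) boundary condition, so it does not alter $L^2(\cG)$ nor the operator, and $G_z^{\cQ}(\mathbf{x_0},\mathbf{x_0})=G_z^{\cQ^{\mathbf{x_0}}}(v_{\mathbf{x_0}},v_{\mathbf{x_0}})$. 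The metric \eqref{eq:defdistance} is already defined through balls centred at $v_{\mathbf{x_0}}$, and the hypotheses \eqref{eq:ConditionCompQG} transfer to $\cQ^{\mathbf{x_0}}$ up to halving the lower length bound~$m$, which only affects the constants.

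Next, since $y\mapsto G_z(v_{\mathbf{x_0}},y)$ solves $-f''+Wf=zf$ on every edge away from $v_{\mathbf{x_0}}$, I expand it on each bond $b\in\cB^{\mathbf{x_0}}$ as
\[
G_z(v_{\mathbf{x_0}},\cdot)_b(x)=a(b)\,E_b^z(x)+a(\hat b)\,E_{\hat b}^z(L_b-x).
\]
Imposing the boundary conditions of $\cQ^{\mathbf{x_0}}$ at every vertex $w\neq v_{\mathbf{x_0}}$ yields, exactly as in Section~\ref{sec:Evolution}, the homogeneous relation $(\mathbf{S}(z)\mathbf{D}(z)\vec a)(b)=\vec a(b)$ for bonds $b$ with $o(b)\neq v_{\mathbf{x_0}}$. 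At $v_{\mathbf{x_0}}$ the jump condition defining Green's function produces an explicit source term $\xi(z)\in\C^{\cB^{\mathbf{x_0}}}$ supported on the two bonds incident to $v_{\mathbf{x_0}}$, whose entries depend only on $z$ and on the data of those two half-edges. Altogether one obtains
\[
(\mathbf{S}(z)\mathbf{D}(z)-\mathrm{Id})\vec a=\xi(z).
\]
Corollary~\ref{cor:InvertLargeIm} gives $\|(\mathbf{S}(z)\mathbf{D}(z))^{-1}\|<1/2$ as soon as $\Im z>C$, so $\mathbf{S}\mathbf{D}-\mathrm{Id}=\mathbf{S}\mathbf{D}\,(\mathrm{Id}-(\mathbf{S}\mathbf{D})^{-1})$ is invertible and
\[
\vec a=\sum_{k\ge 0}\bigl(\mathbf{S}(z)\mathbf{D}(z)\bigr)^{-(k+1)}\xi(z),
\]
a Neumann series convergent at geometric rate $2^{-k}$. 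Since $\mathbf{S}(z)J$ is block-diagonal with blocks $\sigma^v(z)$ (proof of Lemma~\ref{lem:InvertScatt}) and $\mathbf{D}(z)$ is diagonal, $(\mathbf{S}\mathbf{D})^{-1}$ propagates support by at most one combinatorial step, so the $k$-th term only samples the data on $B_{G^{\mathbf{x_0}}}(v_{\mathbf{x_0}},k+1)$.

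For the continuity claim, suppose $[\cQ_n,\mathbf{x}_n]\to[\cQ,\mathbf{x_0}]$ in the metric \eqref{eq:defdistance}. For every integer $r\ge 1$, eventually the balls $B_{G_n^{\mathbf{x}_n}}(v_{\mathbf{x}_n},r)$ and $B_{G^{\mathbf{x_0}}}(v_{\mathbf{x_0}},r)$ are isomorphic as labelled combinatorial graphs, with lengths, vertex matrices and potentials matching up to an error $1/r$ (in $C^0$ for $W$). Standard continuous dependence of the ODE \eqref{eq:Eigenedge} on its parameters gives that $(L_b,W_b)\mapsto(E_b^z(L_b),(E_b^z)'(L_b))$ is continuous, and Lemma~\ref{lem:LargeIm} keeps the denominators $E_b^z(L_b)$ uniformly bounded away from zero. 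Consequently every matrix entry of $\mathbf{S}_n\mathbf{D}_n$, of $\xi_n(z)$, and hence of the finitely many non-vanishing entries of $(\mathbf{S}_n\mathbf{D}_n)^{-j}\xi_n(z)$ at combinatorial distance $\le r$ from $v_{\mathbf{x}_n}$, converges to its counterpart at rate $O(1/r)$, with constants controlled by $D,m,M,K_1,K_2$ through Lemmas~\ref{lem:InvertMatrix}--\ref{lem:InvertScatt}. Truncating the series at order $r$ and running the telescoping estimate \eqref{eq:14} of Section~\ref{sec:Kirch} then gives
\[
|a_n(b)-a(b)|+|a_n(\hat b)-a(\hat b)|=O(1/r)+O(2^{-r})
\]
for $b$ incident to $v_{\mathbf{x_0}}$, from which $G_z^{\cQ_n}(\mathbf{x}_n,\mathbf{x}_n)\to G_z^{\cQ}(\mathbf{x_0},\mathbf{x_0})$ via the edge expansion.

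The main obstacle is reconciling the discrete locality of $(\mathbf{S}\mathbf{D})^{-1}$, which only propagates support one combinatorial step at a time, with the need for the Neumann series to damp genuinely geometrically: without a uniform control on $\|(\mathbf{S}\mathbf{D})^{-1}\|$, the number of nonzero entries produced at step $k$ could grow like $D^k$ and outpace the telescoping error. The restriction $\Im z>C$ in the statement exists precisely to enforce such a uniform bound on the whole class $\mathbf{Q}_*^{D,m,M}$ via Corollary~\ref{cor:InvertLargeIm}, so that the termwise comparison can be carried out uniformly in~$n$.
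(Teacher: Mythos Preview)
There is a genuine gap in your argument, and the paper explicitly flags it. You assert that ``the hypotheses \eqref{eq:ConditionCompQG} transfer to $\cQ^{\mathbf{x_0}}$ up to halving the lower length bound~$m$.'' This is false: the root $\mathbf{x_0}=(b_0,x_0)$ may lie anywhere in the open edge, so the two new edges of $\cQ^{\mathbf{x_0}}$ have lengths $x_0$ and $L_{b_0}-x_0$, either of which can be arbitrarily small. The set $\mathbf{Q}_*^{D,m,M}$ imposes no lower bound on the distance from $\mathbf{x_0}$ to the nearest vertex (that is the role of $\mathbf{Q}_*^{D,m,M,\delta}$, which is \emph{not} the space in the statement). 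Consequently you lose the uniform lower length bound that Lemma~\ref{lem:LargeIm} and Corollary~\ref{cor:InvertLargeIm} require: the constant $C'$ in Corollary~\ref{cor:InvertLargeIm} depends on $m$, and with $m$ effectively replaced by $\min(x_0,L_{b_0}-x_0)$ you no longer get a single threshold $C$ for $\Im z$ valid across all of $\mathbf{Q}_*^{D,m,M}$. The Neumann series may still converge for each fixed graph, but the geometric rate is not uniform, and the continuity argument collapses.

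The paper's repair is to add the auxiliary Kirchhoff vertex not at $\mathbf{x_0}$ but at a point $\mathbf{x_1}=(b_0,\kappa L_{b_0})$ with $\kappa\in[\tfrac12,\tfrac23]$, so that both new edges have length at least $m/3$. One then runs your Neumann-series argument to obtain continuity of $[\cQ,\mathbf{x_0}]\mapsto G_z(\mathbf{x_1},\mathbf{x_0})$ (and of its $x$-derivative at $\mathbf{x_1}$, via Lemma~\ref{lem:maxime}). Finally, since $x\mapsto G_z((b_0,x),\mathbf{x_0})$ solves the ODE on the half-edge containing $\mathbf{x_0}$, one writes
\[
G_z(\mathbf{x_0},\mathbf{x_0})=C^z_{b_{\mathbf{x_0}}}(\mathsf{x_0})\,G_z(\mathbf{x_1},\mathbf{x_0})+S^z_{b_{\mathbf{x_0}}}(\mathsf{x_0})\,\tfrac{\dd}{\dd x}\big|_{x=L_{b_0}/2}G_z((b_0,x),\mathbf{x_0}),
\]
and the continuous dependence of $C^z_b,S^z_b$ on the data yields the result. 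Apart from this midpoint trick, your outline matches the paper's proof.
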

Recall that $\mathbf{G}_z$ is the notation for the map $\mathbf{G}_z:\mathbf{Q}_* \mapsto G_z(\mathbf{x_0},\mathbf{x_0})\in \C$.
\begin{proof}[Proof of Theorem \ref{th:GreenContinuous2} from Proposition \ref{prop:GreenContinuous}]
Let $\{\mathrm{Q}_n\}\subset \mathbf{Q}_*^{D,m,M}$ be a sequence of (equivalence classes of) rooted quantum graphs converging to $\mathrm{Q}\in  \mathbf{Q}_*^{D,m,M}$. By Proposition \ref{prop:GreenContinuous}, we have
\begin{equation}\label{eq:continuityG}
\lim\limits_{n\To\infty} \mathbf{G}_z(\mathrm{Q}_n)= \mathbf{G}_z(\mathrm{Q})
\end{equation}
for all $z\in \C$ such that $\Re z \in [K_1,K_2]$ and $\Im z>C$. Now $\mathbf{G}_z(\mathrm{Q}_n)$ is a sequence of holomorphic functions on $\C^+:=\{z\in \C, \Im z>0\}$, as shown in Lemma \ref{lem:Herglotz}, which is locally bounded by Lemma~\ref{lem:BoundGreenImagin}. We may thus apply the Vitali-Porter theorem to conclude that \eqref{eq:continuityG} holds on $\C^+$, and that the convergence is uniform on all compact sets of $\C^+$. From  \eqref{eq:GreenSym}, we see that \eqref{eq:continuityG} actually holds on all of $\C\setminus \R$.
\end{proof}

The rest of the section is devoted to the proof of Proposition~\ref{prop:GreenContinuous}. The idea is to use the fact (proved in Lemma~\ref{lem:greenskernelcts}) that $f(\mathbf{y_0})= G_z(\mathbf{x_0},\mathbf{y_0})$ solves \eqref{eq:Eigenedge} for $\mathbf{y_0}\neq \mathbf{x_0}$, so the formalism of Section~\ref{sec:Evolution} can be applied to $f$ for $\mathbf{y_0}$ in the two ``sub-edges'' of $b_0$ of lengths $x_0$ and $L_{b_0}-x_0$. An issue arises however: the new edge lengths $x_0$ or $L_{b_0}-x_0$ can be very small.  This can risk invalidating
the results of Section \ref{sec:Evolution}. (Recall we assumed a positive
lower bound on all edge lengths.)  To resolve this, given $\mathbf{x_0}$, we first study $G_z(\mathbf{x}_{b_0}^{(\kappa)},\mathbf{y_0})$ with $\mathbf{x}_{b_0}^{(\kappa)}$ near the midpoint of $b_0$. At the end, we relate this to $G_z(\mathbf{x_0},\mathbf{y_0})$ to conclude the proof.

\begin{proof}[Proof of Proposition~\ref{prop:GreenContinuous}]

Given $[\cQ,(b_0,x_0)]$, we consider the point 
\[
\mathbf{x_1}=\mathbf{x}_{b_0}^{(\kappa)}:=(b_0, \kappa L_{b_0})\in \mathcal{G}
\]
with $\kappa\in [\frac{1}{2},\frac{2}{3}]$. We define a quantum graph $\cQ^{\mathbf{x_1}}$ by adding a vertex at $\mathbf{x_1}$ with Kirchhoff boundary conditions, following the procedure described in Definition \ref{def:adding}.

The function $f=G_z(\mathbf{x_1}, \cdot)$ solves $Hf=zf$, so we may find coefficients $\vec{a}=\big(a(b)\big)_{b\in \cB^{\mathbf{x_1}}}$ such that inside every edge of $G^{\mathbf{x_1}}$, we have $f_b(x) = a(b) E_b^z(x) + a(\widehat{b}) E_{\widehat{b}}^z(L_b-x)$. Let us write $b_{\mathbf{x_0}}$ for the oriented edge of $\cQ^{\mathbf{x_1}}$ containing $\mathbf{x_0}$ with origin $v_1$ (this is either $(v_1,o(b_0))$ or $(v_1,t(b_0))$). Denoting by $\mathsf{x_0}$ the position of $\mathbf{x_0}$ on the oriented edge $b_{\mathbf{x_0}}$, we have
\begin{equation}\label{e:expamid}
G_z(\mathbf{x_1},\mathbf{x_0})= a(b_{\mathbf{x_0}}) E^z_{b_{\mathbf{x_0}}}(\mathsf{x_0}) + a(\widehat{b}_{\mathbf{x_0}}) E^z_{\widehat{b}_{\mathbf{x_0}}}(L_{b_{\mathbf{x_0}}}- \mathsf{x_0}) \,.
\end{equation}

The map $\mathbf{Q}_*^{D,m,M} \ni  [\cQ,\mathbf{x_0}]\mapsto (E^z_{b_{\mathbf{x_0}}}(\mathsf{x_0}), E^z_{\widehat{b}_{\mathbf{x_0}}}(L_{b_{\mathbf{x_0}}}-\mathsf{x}_0))\in \C^2$ is continuous by \cite[p. 10]{PT87}.\footnote{In fact, $E_b^z(x)=y_1(x,z,W_b)-\ii\sqrt{z}y_2(x,z,W_b)$ and $E^z_{\widehat{b}}(L_b-x)=y_1(L_b-x,z,W_{\widehat{b}})-\ii\sqrt{z}y_2(L_b-x,z,W_{\widehat{b}})$ with $y_j(x)$ from \cite{PT87} independent of $L_b$ (as they are defined using initial conditions at $0$). It follows from \cite[p. 10]{PT87} that the maps are continuous in $W$, $H^2$ in $x$, and the map $L_b\mapsto E^z_{\widehat{b}}(L_b-x)$ is $H^2$.} To show that $\mathbf{Q}_*^{D,m,M} \ni  [\cQ,\mathbf{x_0}]\mapsto \big{(}a(b_{\mathbf{x_0}}), a(\widehat{b}_{\mathbf{x_0}})\big{)}\in \C^2$ is continuous, we use the results of Section~\ref{sec:Evolution}, as we now demonstrate.

Recall that $f=G_z(\mathbf{x_1}, \cdot)$ satisfies \eqref{eq:LimitCondGreen} at $v_1$. Defining $F$ as in \eqref{eq:defF} and recalling \eqref{e:a1a2kir}, this corresponds to having $A_1(v_1)F(v_1)+A_2(v_1) F'(v_1) = \frac{-2}{d(v_1)}\begin{pmatrix}1\\1\end{pmatrix} = -\begin{pmatrix} 1\\ 1 \end{pmatrix}$. Using \eqref{e:FF'}, this yields 
\[
\alpha_{v_1}=\sigma^{v_1} \gamma_{v_1} - \big( A_1(v_1) - \ii \sqrt{z} A_2(v_1)\big)^{-1} \begin{pmatrix}
1\\1 \end{pmatrix}\,.
\]
In general $U_v = \frac{2}{d(v)} \mathbbm{1}-\id$ implies that $A_1(v) - \ii \sqrt{z} A_2(v) = \frac{2\ii}{d(v)}(1-\sqrt{z})\mathbbm{1}-2\ii\, \id$ and thus $(A_1(v) - \ii \sqrt{z} A_2(v))^{-1} = \frac{1}{2\ii}(\frac{\sqrt{z}-1}{d(v)\sqrt{z}}\mathbbm{1}-\id)$. Here $d(v_1)=2$ and we get
\[
\alpha_{v_1}=\sigma^{v_1} \gamma_{v_1} + \frac{1}{2\ii \sqrt{z}} \begin{pmatrix} 1\\1 \end{pmatrix}.
\]
By Lemma~\ref{lem:GreenBCs} (ii), at all remaining vertices $f$ satisfies the
standard boundary conditions, so we have that
\begin{equation}\label{eq:CoordGreen}
\left(\mathrm{Id}-\mathbf{S}(z) \mathbf{D}(z) \right)\vec{a} = \frac{1}{2\ii\sqrt{z}} \delta_{o_b=v_1}.
\end{equation}

Let us write $\xi_{v_1}(z)$ for the vector of $\C^{\cB^{\mathbf{x_1}}}$ with 2 non-zero components $\frac{1}{2\ii\sqrt{z}} \delta_{o_b=v_1}$. By Corollary~\ref{cor:InvertLargeIm} and \eqref{eq:CoordGreen}, we obtain that for any $0<K_1<K_2$, there exists $C=C(D,m,M,K_1,K_2)>0$ such that for all $z\in \C$ with $\Re z \in [K_1,K_2], \Im z \ge C$, we can expand
\begin{equation}\label{eq:NeumanSeries}
\vec{a} = -\sum_{k=1}^\infty \left(\mathbf{S}(z) \mathbf{D}(z)\right)^{-k} \xi_{v_1}(z) \,.
\end{equation}

Here we used the fact that if $\|A^{-1}\|<1$, then $(I-A)^{-1} =-A^{-1}(I-A^{-1})^{-1}=-A^{-1}\sum_{k=0}^{\infty}A^{-k}$.

Now suppose $[\cQ_n,\mathbf{x}_n]\To [\cQ,\mathbf{x_0}]$. By \eqref{eq:NeumanSeries}, we have
\[
a_n(b_{\mathbf{x}_n}) = - \sum_{k=1}^\infty \Big{(}\left(\mathbf{S}_n(z) \mathbf{D}_n(z)\right)^{-k} \xi_{v_1^n}(z)\Big{)}(b_{\mathbf{x}_n}) \,.
\]
As noted in Lemma~\ref{lem:InvertScatt}, if $J:\C^\mathcal{B}\To\C^\mathcal{B}$ is the orientation-reversing operator, then $\mathbf{S}(z)\mathbf{D}(z)J$ is block-diagonal, so $\big(\mathbf{S}(z)\mathbf{D}(z)J\big)^{-1}$ is also block-diagonal. Hence, $\big(\mathbf{S}(z)\mathbf{D}(z)\big)^{-1} = J\big(\mathbf{S}(z)\mathbf{D}(z)J\big)^{-1}$ is local, in the sense that the $k$-th term in the sum above only depends on the quantum graph in a ball of size $k$ around $v_1^n$. Consequently, for any $k\in \N$, we have
\[
\left(\left(\mathbf{S}_n(z) \mathbf{D}_n(z)\right)^{-k} \xi_{v_1^n}(z)\right)(b_{\mathbf{x}_n}) \To  \left(\left(\mathbf{S}(z) \mathbf{D}(z)\right)^{-k} \xi_{v_1}(z)\right)(b_{\mathbf{x_0}})\,.
\]
On the other hand, for all $n$, we have $\big|\big(\big(\mathbf{S}_n(z) \mathbf{D}_n(z)\big)^{-k} \xi_{v_1^n}(z)\big)(b_{\mathbf{x}_n})\big|\leq \frac{1}{\sqrt{z}} 2^{-k}$, which is summable. We may apply the dominated convergence theorem to conclude that $a_n(b_{\mathbf{x}_n}) \To a(b_{\mathbf{x_0}})$. The same argument works for $a(\widehat{b_{\mathbf{x_0}}})$. To summarize, we have shown that $\mathbf{Q}_*^{D,m,M} \ni  [\cQ,\mathbf{x_0}]\mapsto G_z(\mathbf{x}_{b_0}^{(\kappa)},\mathbf{x_0})$ is continuous for all $\kappa\in [\frac{1}{2},\frac{2}{3}]$ and all $z$ with $\Re z \in [K_1,K_2]$, $\Im z \ge C(D,m,M,K_1,K_2)$.

Let us fix $\kappa=\frac12$ and denote $\mathbf{x_1}=\mathbf{x}_{b_0}^{(1/2)}$.
Applying Lemma~\ref{lem:maxime} on the new edge $b_1=(v_1,t(b_0))$ with 
$\zeta = \frac16 L_{b_0} (=\frac23L_{b_0}-\frac12L_{b_0})$ we get that
\begin{displaymath}
   \frac{\dd}{\dd x} \Big|_{x =  L_{b_0}/2}  G_z((b_0, x),\mathbf{x_0}) 
= \langle Z_z^{b_1,L_{b_0}/6}, G_z(\cdot, \mathbf{x_0}) \rangle,
\end{displaymath}
where we have proved that $G_z(\cdot, \mathbf{x_0})$ is continuous throughout
the integration range.  As in the proof of Lemma~\ref{lem:BoundGreenImagin}
we may find an upper bound for $Z_z^{b_1,L_{b_0}/6}$, and 
Dominated Convergence permits us to conclude that
 $\mathbf{Q}_*^{D,m,M} \ni  [\cQ,\mathbf{x_0}]\mapsto \frac{\mathrm{d}}{\mathrm{d} x} \Big{|}_{x =  L_{b_0}/2}  G_z((b_0, x),\mathbf{x_0})$ is also continuous.

To conclude, keeping $\mathbf{x_1}=\mathbf{x}_{b_0}^{(\frac{1}{2})}$, let $C_{b_{\mathbf{x_0}}}^z$, $S_{b_{\mathbf{x_0}}}^z$ be the functions on $[0,\frac{L_{b_0}}{2}]$ defined in \eqref{e:cs}. Then, with $\mathsf{x_0}$ the position
of $\mathbf{x_0}$ on $b_{\mathbf{x_0}}$,
\begin{equation}\label{e:firstexpan}
G_z(\mathbf{x_0},\mathbf{y_0}) = C_{b_{\mathbf{x_0}}}^z( \mathsf{x_0}) G_z(\mathbf{x_1},\mathbf{y_0}) + S_{b_{\mathbf{x_0}}}^z( \mathsf{x_0}) \frac{\dd}{\dd x }\Big{|}_{x =\frac{L_{b_0}}{2}}  G_z((b_0,x),\mathbf{y_0}).
\end{equation}

By \cite[p. 10]{PT87}, the map $\mathbf{Q}_*^{D,m,M} \ni  [\cQ,\mathbf{x_0}]\mapsto (C_{b_{\mathbf{x_0}}}^z(\mathsf{x_0}), S_{b_{\mathbf{x_0}}}^z(\mathsf{x_0}))\in \C^2$ is continuous. Thus, $\mathbf{Q}^{D,m,M}_* \ni  [\cQ,\mathbf{x_0}]\mapsto G_z(\mathbf{x_0},\mathbf{x_0})$ is continuous for all $z$ with $\Re z \in [K_1,K_2]$, $\Im z \geq C$.
\end{proof}

\section{Convergence of empirical measures}\label{sec:ConvEmp}
\subsection{The limit of empirical measures in terms of the resolvent}\label{subsec:ConvV1}
We begin with the following lemma, which gives the first part of Theorem \ref{th:Empirical} when the function $\chi$ is smooth.

\begin{lemme}\label{lem:FirstPart}
Let $\cQ_N$ be a sequence of quantum graphs satisfying \eqref{eq:ConditionCompQG} and converging in the sense of Benjamini-Schramm to $\mathbb{P}$.

Let $\chi \in C_c^\infty(\R)$. We have
\begin{align*}
\lim\limits_{N\rightarrow \infty} \int_{\R} \chi(\lambda) \mathrm{d}\mu_{\cQ_N}(\lambda) = \lim\limits_{\varepsilon \downarrow 0} \frac{1}{\pi}\int_{\R} \chi(\lambda) \mathbb{E}_{\mathbb{P}} \big{[} \Im \mathbf{G}_{\lambda+\ii\varepsilon}  \big{]} \mathrm{d}\lambda \,.
\end{align*}
\end{lemme}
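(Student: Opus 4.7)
The strategy is to apply the Helffer-Sj\"ostrand functional calculus and invoke Corollary~\ref{cor:bsconv}. Pick an almost-analytic extension $\tilde\chi\in C_c^\infty(\C)$ of $\chi$ with $\tilde\chi|_\R=\chi$, $\supp\tilde\chi$ compact, and $|\bar\partial\tilde\chi(z)|\le C_k|\Im z|^k$ for every $k\in\N$. Since $H_{\cQ_N}$ is self-adjoint with compact resolvent, the spectral theorem and Cauchy-Pompeiu give the operator-norm convergent identity
\[
\chi(H_{\cQ_N}) \;=\; \frac{1}{\pi}\int_\C \bar\partial\tilde\chi(z)\,(H_{\cQ_N}-z)^{-1}\,\dd x\,\dd y,
\]
valid because $\|(H_{\cQ_N}-z)^{-1}\|\le 1/|\Im z|$ while $|\bar\partial\tilde\chi(z)|\le C_1|\Im z|$.

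Taking the diagonal of the Schwartz kernel, integrating over $\cG_N$ (using $\Tr\chi(H_{\cQ_N})=\int_{\cG_N}\chi(H_{\cQ_N})(\mathbf{x_0},\mathbf{x_0})\,\dd\mathbf{x_0}$ and \eqref{eq: defEmpirical}), and applying Fubini (justified by $|G_z|\le 1/|\Im z|$ from Lemma~\ref{lem:BoundGreenImagin}), we obtain
\[
\int_\R\chi\,\dd\mu_{\cQ_N} \;=\; \frac{1}{\pi}\int_\C \bar\partial\tilde\chi(z)\left[\frac{1}{\cL(\cQ_N)}\int_{\cG_N}G^{\cQ_N}_z(\mathbf{x_0},\mathbf{x_0})\,\dd\mathbf{x_0}\right]\dd x\,\dd y.
\]
For each $z\in\C\setminus\R$, the bracketed term converges to $F(z):=\mathbb{E}_{\mathbb{P}}[\mathbf{G}_z]$ by Corollary~\ref{cor:bsconv}, and the dominant $|\bar\partial\tilde\chi(z)|/|\Im z|\le C_1$ (bounded, compactly supported in $\C$) permits dominated convergence:
\[
\lim_{N\to\infty}\int_\R\chi\,\dd\mu_{\cQ_N} \;=\; \frac{1}{\pi}\int_\C \bar\partial\tilde\chi(z)\,F(z)\,\dd x\,\dd y.
\]

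It remains to identify the right-hand side with $\lim_{\varepsilon\downarrow 0}\frac{1}{\pi}\int_\R\chi(\lambda)\Im F(\lambda+\ii\varepsilon)\,\dd\lambda$, which is the stated RHS by linearity of $\mathbb{E}_{\mathbb{P}}$ and $\Im$. The function $F$ is Herglotz on $\C^+$ (Fubini applied to Lemma~\ref{lem:Herglotz}) with $|F(z)|\le 1/|\Im z|$. A crucial observation: testing $z=\ii y$ with $y\to\infty$ in $|F(\ii y)|\le 1/y$ and examining $\Im F(\ii y)=Cy+\int\frac{y}{t^2+y^2}\dd\nu(t)$ from the representation of \cite[Theorem~5.9.1]{Simon15} forces $C=0$ and, by monotone convergence, $\nu(\R)\le 1$. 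Thus $F(z)=D+\int_\R(\frac{1}{t-z}-\frac{t}{1+t^2})\dd\nu(t)$ with $D\in\R$ and $\nu$ \emph{finite}. Now Fubini is straightforward (the integrand is dominated on $\supp\tilde\chi\times\R$ by a product of an $L^1(\dd x\,\dd y)$ function and an $L^1(\dd\nu)$ function, using $|\bar\partial\tilde\chi(z)|\le C_1|\Im z|$ to cancel the factor $1/|\Im z|$ from $|t-z|^{-1}$), and combined with $\int_\C\bar\partial\tilde\chi\,\dd x\,\dd y=0$ and Cauchy-Pompeiu $\frac{1}{\pi}\int_\C\frac{\bar\partial\tilde\chi(z)}{t-z}\,\dd x\,\dd y=\chi(t)$ for $t\in\R$, it yields $\frac{1}{\pi}\int_\C\bar\partial\tilde\chi(z) F(z)\,\dd x\,\dd y = \int_\R\chi\,\dd\nu$. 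On the Stieltjes side, $\Im F(\lambda+\ii\varepsilon)=\int\frac{\varepsilon\,\dd\nu(t)}{(t-\lambda)^2+\varepsilon^2}$ and Fubini give $\frac{1}{\pi}\int_\R\chi(\lambda)\Im F(\lambda+\ii\varepsilon)\,\dd\lambda=\int_\R\chi_\varepsilon\,\dd\nu$, where $\chi_\varepsilon$ is the Poisson extension of $\chi$. Since $\chi_\varepsilon\to\chi$ pointwise with the $\nu$-integrable dominant $\|\chi\|_\infty$ ($\nu$ being finite), dominated convergence yields the same value $\int_\R\chi\,\dd\nu$, completing the proof.

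The main technical obstacle is the last identification: one must verify that $F$ inherits the Herglotz structure with a \emph{finite} representing measure, and carry out the resulting Fubini exchanges against $\nu$. The finiteness of $\nu$, extracted from the uniform bound $|F(z)|\le 1/|\Im z|$ (itself inherited from the individual bounds $|\mathbf{G}_z|\le 1/|\Im z|$), is the fact that tames the otherwise potentially delicate dominated convergence steps in both the Helffer-Sj\"ostrand and the Stieltjes calculations.
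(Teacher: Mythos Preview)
Your overall strategy—Helffer--Sj\"ostrand plus dominated convergence via Corollary~\ref{cor:bsconv}—parallels the paper's, and the first half, leading to
\[
\lim_{N\to\infty}\int_\R\chi\,\dd\mu_{\cQ_N}=\frac{1}{\pi}\int_\C\bar\partial\tilde\chi(z)\,F(z)\,\dd x\,\dd y,
\]
is essentially correct (with the constant from Lemma~\ref{lem:BoundGreenImagin} in place of your $1$).

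The gap is in your ``crucial observation''. You assert $|F(\ii y)|\le 1/y$, inherited from $|\mathbf{G}_z|\le 1/|\Im z|$, and deduce that the representing measure $\nu$ is finite. But Lemma~\ref{lem:BoundGreenImagin} only yields $|G_z(\mathbf{x_0},\mathbf{x_0})|\le C(Z)/|\Im z|$ for $z$ in a \emph{bounded} set $Z$, with the constant depending on $Z$; it says nothing as $y\to\infty$. In fact the global bound is false: already for the free Laplacian on an edge, $G_z(\mathbf{x_0},\mathbf{x_0})=\tfrac{\ii}{2\sqrt{z}}$, so $|G_{\ii y}(\mathbf{x_0},\mathbf{x_0})|=\tfrac{1}{2\sqrt{y}}\gg \tfrac{1}{y}$, whence $y\,\Im F(\ii y)\to\infty$ and $\nu(\R)=\infty$. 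Your subsequent Fubini against $\dd\nu$ and the dominated-convergence step on the Stieltjes side (dominant $\|\chi\|_\infty$, which needs $\nu$ finite) therefore fail as written.

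The paper avoids this by working with the alternative Herglotz representation $F(z)=cz+d+\int\frac{1+xz}{x-z}\,\dd\mu(x)$, in which the measure $\mu$ (equal to $\frac{\dd\nu}{1+t^2}$) is automatically finite with $\mu(\R)\le\Im F(\ii)$, and then carries out the estimate quantitatively in Lemma~\ref{lem:HerglotzRocks}. Your argument can be repaired along the same lines: replace $\nu$ by $\mu$ in the last paragraph and note that the extra factor $(1+t^2)$ is harmless in the dominated-convergence step because $\chi_\varepsilon(t)=O(\varepsilon/t^2)$ for $t$ away from $\supp\chi$.
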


\begin{proof}
If $\chi \in C_c^\infty(\R)$, we will denote by $\tilde{\chi}$  an \emph{almost analytic} extension of $\chi$, i.e., a smooth function $\tilde{\chi} : \C \mapsto \C$ such that $\tilde{\chi}(z) = \chi(z)$ for $z\in \R$, 
\begin{equation}\label{e:dbar}
\frac{\partial \tilde{\chi}}{\partial \overline{z}}(z) = O ((\Im z)^2),
\end{equation}
 and 
\begin{equation}\label{eq:support extension}
\mathrm{supp }~ \tilde{\chi}\subset \{z; \Re z \in \mathrm{supp}~\chi\}.
\end{equation}
Here $\frac{\partial}{\partial\overline{z}}=\frac{1}{2}(\frac{\partial}{\partial x}+\ii\frac{\partial}{\partial y})$.
For instance, one can take $\tilde{\chi}(x+\ii y) = \chi(x) + \ii y \chi'(x) - \frac{y^2}{2} \chi''(x)$.
We refer the reader to \cite[\S 2.2]{Davies} for more details about almost analytic extensions.

Assume $\supp \chi\subset (-a,a)$, $a>0$. Consider the rectangle $\Omega_\varepsilon := \left[-a-\varepsilon; a+\varepsilon\right] \times \ii \left[-\varepsilon; \varepsilon\right]$ and the curve $\Gamma_\varepsilon := \partial \Omega_\varepsilon$. Cauchy's integral formula $f(\lambda) = \frac{1}{2\pi \ii} \int_{\Gamma} \frac{f(z)}{z-\lambda}\,\dd z$ for analytic $f$ generalizes to
\[
\tilde{\chi}(\lambda) = \frac{1}{2\pi \ii} \left\{\int_{\Gamma_{\varepsilon}} \frac{\tilde{\chi}(z)}{z-\lambda}\,\dd z + \int_{\Omega_{\varepsilon}} \frac{\partial \tilde{\chi}/ \partial \overline{z}}{z-\lambda}\,\dd z\wedge \dd \overline{z}\right\}
\]
for $\lambda \in \Omega_{\varepsilon}$, where $\dd z\wedge \dd \overline{z}=-2\ii\,\dd x\dd y$, see e.g. \cite[Theorem 1.2.1]{Hor}. Hence,
\begin{align}\label{e:uneq}
\cL(\cQ_N)\int_{\R} \chi(\lambda)\,\dd \mu_{\cQ_N}(\lambda) &= \sum_{\lambda_i^N\in (-a,a)} \chi(\lambda_i^N)\\
& = \frac{-1}{2\pi \ii}\left\{\int_{\Gamma_{\varepsilon}} \tilde{\chi}(z)f_N(z)\,\dd z + \int_{\Omega_{\varepsilon}}f_N(z) \frac{\partial \tilde{\chi}}{\partial \overline{z}}  \,\dd z\wedge \dd \overline{z}\right\} ,\nonumber
\end{align}
where $f_N(z):= \mathrm{Tr} \big{[} (H_{\cQ_N}-z)^{-1} \big{]} = \sum_{i \in \N} \frac{1}{\lambda_i^N-z}$. Thanks to Lemma \ref{lemTrace},
\begin{equation}\label{e:encoreuneq}
f_N(z)=   \int_{\cG_N} G_z(\mathbf{x_0},\mathbf{x_0})\mathrm{d}\mathbf{x_0}\,.
\end{equation}

From Lemma \ref{lem:BoundGreenImagin}, we know that there exists $C(D,m,M,a)$ such that for all $z\in \Omega_{\varepsilon} \setminus \R$, we have
\begin{equation}\label{e:EstimTrace}
\int_{\cG} \left|G_z(\mathbf{x_0},\mathbf{x_0})\right|\mathrm{d}\mathbf{x_0} \leq \frac{C(D,m,M,a)}{|\Im z|} \mathcal{L}(\cQ) \,.
\end{equation}

Recalling that $\frac{\partial \tilde{\chi}}{\partial \overline{z}}(z) = O ((\Im z)^2)$, \eqref{e:encoreuneq} and \eqref{e:EstimTrace} imply that 
\[
\left|\frac{1}{\mathcal{L}(\cQ_N)}\int_{\Omega_{\varepsilon}} f_N(z) \frac{\partial \tilde{\chi}}{\partial \overline{z}} \mathrm{d} z \wedge \mathrm{d} \overline{z}\right| \le C \varepsilon
\]
for some $C>0$ independent of $N$ and $\varepsilon$. On the other hand,
\[
 \frac{1}{\mathcal{L}(\cQ_N)} \int_{\Gamma_\varepsilon} f_N(z) \tilde{\chi}(z)\mathrm{d}z =  \int_{\Gamma_\varepsilon}\frac{1}{\mathcal{L}(\cQ_N)}\int_{\cG_N} G_z(\mathbf{x_0},\mathbf{x_0})\mathrm{d}\mathbf{x_0} \,\tilde{\chi}(z)\mathrm{d}z \underset{N\To+\infty}{\To} \int_{\Gamma_\varepsilon}\mathbb{E}_\mathbb{P} \left[ \mathbf{G}_z \right] \tilde{\chi}(z) \mathrm{d}z
\]
by Corollary~\ref{cor:bsconv}. Therefore, from \eqref{e:uneq} we get
\begin{equation}\label{e:afirstlimit}
\int_\R \chi(\lambda) \mathrm{d}\mu_{\cQ_N}(\lambda) \underset{N\To+\infty}{\To}   \frac{-1}{2\pi\ii} \int_{\Gamma_\varepsilon}\tilde{\chi}(z) \mathbb{E}_\mathbb{P} \left[ \mathbf{G}_z \right]\mathrm{d}z + O(\varepsilon).
\end{equation}

Lemma \ref{lem:Herglotz}, along with \eqref{eq:GreenSym} tells us that $z\mapsto G_z(\mathbf{x_0},\mathbf{x_0})$ satisfies the assumptions of the following lemma.

\begin{lemme}\label{lem:HerglotzRocks}
Let $F$ be a holomorphic function on $\C\setminus \R$, such that $\Im F(z)>0$ if $\Im z>0$, and
 $F(\overline{z})= \overline{F}(z)$. Then there exists a constant $C= C(\chi, a, F(\ii))$ such that
\begin{align*}
\Big{|}\frac{-1}{2\pi\ii}\int_{\Gamma_\varepsilon}  \tilde{\chi}(z) F(z) \mathrm{d}z - \frac{1}{\pi}  \int_\R \chi(\lambda) \Im F(\lambda +\ii\varepsilon) \mathrm{d}\lambda \Big{|} \leq C\sqrt{\varepsilon}\,.
\end{align*}
\end{lemme}

Recalling \eqref{e:afirstlimit}, we may hence conclude that 
\[
\lim\limits_{N\rightarrow \infty} \int_\R \chi(\lambda) \mathrm{d}\mu_{\cQ_N}(\lambda) = \lim\limits_{\varepsilon \downarrow 0} \frac{1}{\pi}\int_{\R} \chi(\lambda) \mathbb{E}_\mathbb{P}\big{[} \Im \mathbf{G}_{\lambda +\ii\varepsilon} \big{]} \mathrm{d}\lambda \,,
\]
which concludes the proof of Lemma \ref{lem:FirstPart}.
\end{proof}

\begin{proof}[Proof of Lemma \ref{lem:HerglotzRocks}]

  By \cite[Theorem 5.9.1]{Simon15}, for a function $F$ satisfying the
  assumptions, there exist $c\geq 0$, $d\in \R$, and a positive finite
  measure $\mu$ on $\R$ such that
\begin{equation}\label{eq:HerglotzRep}
F(z) = cz+ d+ \int_\R \frac{1+xz}{x-z}\, \mathrm{d}\mu(x)\,.
\end{equation}
One immediately checks that $F(\ii)= d + \ii c + \ii \mu(\R)$, so that $c\leq \Im F(\ii)$,  $\mu(\R)\leq \Im F(\ii)$, $d=\Re F(\ii)$.

Using  \eqref{eq:support extension}, \eqref{eq:HerglotzRep} and the hypothesis $F(\overline{z})=\overline{F}(z)$, we see that
\begin{align*}
&\frac{-1}{2\pi\ii}\int_{\Gamma_\varepsilon}  \tilde{\chi}(z) F(z) \mathrm{d}z = \frac{1}{2\pi\ii}\int_{-a}^{a} \Big{(}\tilde{\chi}(\lambda+\ii\varepsilon) F(\lambda+\ii\varepsilon) - \tilde{\chi}(\lambda-\ii\varepsilon) F(\lambda-\ii\varepsilon) \Big{)}   \mathrm{d} \lambda\\
&\qquad= \frac{1}{2\pi\ii}\int_{-a}^{a} \Big{(}\tilde{\chi}(\lambda+\ii\varepsilon)  (c\lambda + \ii \varepsilon c + d)- \tilde{\chi}(\lambda-\ii\varepsilon) (c\lambda - \ii \varepsilon c + d)\Big{)}   \mathrm{d} \lambda\\
&\qquad \quad+ \frac{1}{2\pi\ii}\int_{-a}^{a} \Big{(}\tilde{\chi}(\lambda+\ii\varepsilon)  \int_\R \frac{1+x(\lambda+\ii\varepsilon)}{x-\lambda-\ii\varepsilon} \mathrm{d}\mu(x)- \tilde{\chi}(\lambda-\ii\varepsilon)\int_\R \frac{1+x(\lambda-\ii\varepsilon)}{x-\lambda+\ii\varepsilon} \mathrm{d}\mu(x)  \Big{)}\mathrm{d} \lambda\\
&\qquad= \frac{1}{2\pi\ii} \int_\R \Big{(}\int_{-a}^{a} \Big{(} \tilde{\chi}(\lambda+\ii\varepsilon) \frac{1+x(\lambda+\ii\varepsilon)}{x-\lambda-\ii\varepsilon}  -  \tilde{\chi}(\lambda- \ii\varepsilon) \frac{1+x(\lambda-\ii\varepsilon)}{x-\lambda+ \ii\varepsilon} \Big{)} \mathrm{d}\lambda \Big{)} \mathrm{d}\mu(x) + O(\varepsilon)\,,
\end{align*}
where we used Fubini's theorem in the last equality to exchange the integrals. Here, the $O(\varepsilon)$ depends only on $\chi$, $a$, and $F(\ii)$.

Consider the quantity
\[
f_\varepsilon(\lambda,x):= \left(1+x(\lambda+\ii\varepsilon)\right) \frac{ \tilde{\chi}(\lambda+\ii\varepsilon) - \chi(\lambda)}{x-\lambda -\ii\varepsilon}.
\]

When $|x|\ge 1+a(a+2)$, we have $\frac{|1+x\lambda+\ii x\varepsilon|}{|x-\lambda-\ii\varepsilon|} \le \frac{1+(a+1)|x|}{|x|-a} \le a+2$. Hence,
\[
|f_\varepsilon(\lambda,x)|\leq (a+2)  \varepsilon \sup\limits_{z\in [-a,a] + \ii [-1,1]} |\tilde{\chi}'(z)|\,.
\]

Next, suppose $|x|\leq 1+a(a+2)$. If $|x-\lambda|\geq \sqrt{\varepsilon}$, we have 
\begin{align*}
 |f_\varepsilon(\lambda,x)|&\le \left( 1+ [1+a(a+2)](a+1) \right) \frac{|\tilde{\chi}(\lambda+\ii\varepsilon) - \chi(\lambda)|}{\sqrt{\varepsilon}} \\
 &\le \sqrt{\varepsilon} \left( 1+ [1+a(a+2)](a+1) \right) \sup\limits_{z\in [-a,a] + \ii [-1,1]} |\tilde{\chi}'(z)|.
 \end{align*}

Finally, if $|x-\lambda|< \sqrt{\varepsilon}$, we have $f_\varepsilon(\lambda, x) \leq  \left( 1+ [1+a(a+2)](a+1) \right) \sup\limits_{z\in [-a,a] + \ii [-1,1]} |\tilde{\chi}'(z)|$.

All in all, we may find a constant $C=C(a,\tilde{\chi})$ such that for all $x\in \R$, we have
\[
\Big|\int_{-a}^a f_\varepsilon(\lambda,x) \dd\lambda \Big| \leq C \sqrt{\varepsilon}\,.
\]

Therefore, we obtain that 
\[
\Big|\int_\R \int_{-a}^a  \tilde{\chi}(\lambda+\ii\varepsilon) \frac{1+x(\lambda+\ii\varepsilon)}{x-\lambda-\ii\varepsilon} \dd\lambda \dd\mu(x) - \int_\R \int_{-a}^a  \chi(\lambda) \frac{1+x (\lambda+\ii\varepsilon)}{x-\lambda-\ii\varepsilon} \dd\lambda \dd\mu(x)\Big|\leq C\sqrt{\varepsilon} \mu(\R)\,.
\]
Similarly, we obtain
\[
\Big{|}\int_\R \int_{-a}^a  \tilde{\chi}(\lambda- \ii\varepsilon) \frac{1+x(\lambda- \ii\varepsilon)}{x-\lambda+\ii\varepsilon} \dd\lambda \dd\mu(x) -\int_\R \int_{-a}^a  \chi(\lambda) \frac{1+x (\lambda-\ii\varepsilon) }{x-\lambda + \ii\varepsilon} \dd\lambda \dd\mu(x)\Big{|}\leq C\sqrt{\varepsilon} \mu(\R)\,.
\]

Therefore, we have
\begin{align*}
\frac{-1}{2\pi\ii}\int_{\Gamma_\varepsilon}  \tilde{\chi}(z) F(z) \mathrm{d}z 
&= \frac{1}{2\pi\ii} \int_\R \int_{-a}^{a} \Big{(}\chi(\lambda) \frac{1+x (\lambda+\ii\varepsilon) }{x-\lambda-\ii\varepsilon}  -  \chi (\lambda) \frac{1+x(\lambda-\ii\varepsilon)}{x-\lambda+ \ii\varepsilon} \Big{)} \mathrm{d}\lambda \mathrm{d}\mu(x) + O(\sqrt{\varepsilon})\\
&= \frac{1}{2\pi\ii} \int_{-a}^a \chi(\lambda)\Big{(} 2\ii c\varepsilon + 2\ii \Im \Big[\int_\R \frac{1+x(\lambda+\ii\varepsilon)}{x-\lambda-\ii\varepsilon} \mathrm{d}\mu(x)\Big]\Big{)}\,\dd\lambda +O(\sqrt{\varepsilon})\\
&= \frac{1}{\pi}  \int_{-a}^a  \chi(\lambda) \Im F(\lambda +\ii\varepsilon) \mathrm{d}\lambda +O(\sqrt{\varepsilon})\,,
\end{align*}
with the constant in the $O$ depending only on $\chi$, $a$, and $F(\ii)$.
\end{proof}

\subsection{Integral kernels of functions of Schrödinger operators}\label{subsec:HelfSj}
The aim of this section is to prove Theorem \ref{th:IntegralKernelsAreCool}, which is the combination of Lemma \ref{lem:Simon} and of Lemma \ref{lem:FoncOpCont} below.

The first lemma is an adaptation of \cite[Lemma B.7.9]{Simon82}.

\begin{lemme}\label{lem:Simon}
Let $\cQ$ be a quantum graph satisfying \eqref{eq:ConditionCompQG}, and let $\chi\in L^\infty(\R)$ have compact support. The operator $\chi(H_\cQ)$, defined through the functional calculus, has a continuous kernel, in the sense that for any $b_0,b_1\in \mathcal{B}$, $ (0, L_{b_0}) \times (0, L_{b_1})\ni (x_0,y_0)\mapsto \chi\big{(}H_\cQ\big{)}\big{(}(b_0,x_0),(b_1,y_0)\big{)}$ is continuous.

Furthermore, there exists a constant $C$ depending on the support of $\chi$ and on $D,m,M$ such that for any $\mathbf{x_0}, \mathbf{y_0}\in \cG$, we have
\begin{equation}\label{eq:KernLinfBound}
 \big{|}\chi\big{(}H_\cQ\big{)}(\mathbf{x_0}, \mathbf{y_0})\big{|} \leq C \|\chi\|_{L^\infty}.
\end{equation}

Finally, if $\chi$ is nonnegative, then for any $\mathbf{x_0}, \mathbf{y_0}\in \cG$, we have
\begin{equation}\label{eq:Cauchy-Schwarz}
\big{|}\chi\big(H_\cQ\big)(\mathbf{x_0}, \mathbf{y_0})\big{|} \leq \sqrt{\chi\big(H_\cQ\big)(\mathbf{x_0}, \mathbf{x_0}) \chi\big{(}H_\cQ\big{)}(\mathbf{y_0}, \mathbf{y_0})}.
\end{equation}
\end{lemme}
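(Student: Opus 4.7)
The plan is to adapt Simon's argument \cite[Lemma B.7.9]{Simon82} to the quantum graph setting: we will compensate for the merely $L^\infty$ nature of $\chi$ by sandwiching it between powers of the resolvent, in order to gain $C^0$ regularity through Sobolev embedding. Fix $z_0\in \C\setminus \R$, assume $\mathrm{supp}\,\chi\subset [-a,a]$, and for an integer $N\geq 1$ to be chosen, set $\psi(\lambda):=(\lambda-z_0)^N(\lambda-\bar z_0)^N\chi(\lambda)$. Then $\psi\in L^\infty(\R)$ with $\|\psi\|_\infty\leq (a+|z_0|)^{2N}\|\chi\|_\infty$, and the spectral theorem yields
\[
\chi(H_\cQ) = (H_\cQ-z_0)^{-N}\,\psi(H_\cQ)\,(H_\cQ-\bar z_0)^{-N}.
\]

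The first main step is to show that for every $N\geq 1$ the iterated resolvent $(H_\cQ-z_0)^{-N}$ is an integral operator with a continuous kernel $G_{z_0}^{(N)}$ satisfying
\[
\sup_{\mathbf{x}\in\cG}\,\|G_{z_0}^{(N)}(\mathbf{x},\cdot)\|_{L^2(\cG)} \leq C_N,
\]
with $C_N$ depending only on $D,m,M$ and $\Im z_0$. The case $N=1$ follows from Lemma~\ref{lem:greenskernelcts} together with the observation that $(H_\cQ-z_0)^{-1}$ sends $L^2(\cG)$ into $\mathscr{H}^\cQ\subset \bigoplus_b H^2(0,L_b)$, so point evaluation $f\mapsto [(H_\cQ-z_0)^{-1}f](\mathbf{x})$ is a bounded functional on $L^2(\cG)$ thanks to the Sobolev embedding $H^2(0,L_b)\hookrightarrow C^0([0,L_b])$, whose constant is uniform in $b$ because $L_b\geq m$. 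Its Riesz representative is, up to the conjugation of \eqref{eq:GreenSym}, the function $G_{z_0}(\mathbf{x},\cdot)$. Higher $N$ are obtained inductively from $G_{z_0}^{(N)}(\mathbf{x},\mathbf{y})=\int G_{z_0}(\mathbf{x},\mathbf{u})G_{z_0}^{(N-1)}(\mathbf{u},\mathbf{y})\,\dd\mathbf{u}$, with absolute convergence ensured by the $L^2$ bound and the pointwise decay of Lemma~\ref{lem:BoundGreenImagin}.

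Combining the two ingredients, we approximate $\delta_\mathbf{x}$ and $\delta_\mathbf{y}$ by $L^2$ bumps and use $((H_\cQ-z_0)^{-N})^*=(H_\cQ-\bar z_0)^{-N}$: passing to the limit in the distributional pairing yields the single identity
\[
\chi(H_\cQ)(\mathbf{x},\mathbf{y}) = \bigl\langle G_{\bar z_0}^{(N)}(\cdot,\mathbf{x}),\; \psi(H_\cQ)\, G_{\bar z_0}^{(N)}(\cdot,\mathbf{y}) \bigr\rangle_{L^2(\cG)},
\]
from which one reads off simultaneously the joint continuity of $(\mathbf{x},\mathbf{y})\mapsto \chi(H_\cQ)(\mathbf{x},\mathbf{y})$ on any product $(0,L_{b_0})\times(0,L_{b_1})$ (via continuity of $\mathbf{x}\mapsto G_{\bar z_0}^{(N)}(\cdot,\mathbf{x})$ as an $L^2$-valued map, which in turn follows from joint continuity of the Green's kernel and dominated convergence) and the bound $|\chi(H_\cQ)(\mathbf{x},\mathbf{y})|\leq C_N^2\,\|\psi(H_\cQ)\|_{\mathrm{op}} \leq C(D,m,M,a)\|\chi\|_{L^\infty}$ claimed in \eqref{eq:KernLinfBound}.

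For the Cauchy--Schwarz bound \eqref{eq:Cauchy-Schwarz}, we apply the previous construction to $\sqrt{\chi}\in L^\infty$ (still compactly supported), producing a continuous kernel $K$ for $\sqrt{\chi}(H_\cQ)$. Writing $\chi(H_\cQ)=\sqrt{\chi}(H_\cQ)^*\sqrt{\chi}(H_\cQ)$, Fubini (justified by the $L^\infty$ control just obtained) gives
\[
\chi(H_\cQ)(\mathbf{x},\mathbf{y}) = \int_\cG \overline{K(\mathbf{u},\mathbf{x})}\,K(\mathbf{u},\mathbf{y})\,\dd\mathbf{u},
\]
so $\chi(H_\cQ)(\mathbf{x},\mathbf{x})=\|K(\cdot,\mathbf{x})\|_{L^2}^2$ and \eqref{eq:Cauchy-Schwarz} follows at once from Cauchy--Schwarz in $L^2(\cG)$. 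The main obstacle is the first step: establishing the uniform $L^2$ bound on $G_{z_0}^{(N)}(\mathbf{x},\cdot)$ independently of the quantum graph within $\mathbf{Q}_*^{D,m,M}$, and on a possibly infinite graph. Uniform Sobolev control near vertices crucially uses $L_b\geq m$, while integrability at infinity relies on the $1/|\Im z|$-type bounds of Lemma~\ref{lem:BoundGreenImagin}.
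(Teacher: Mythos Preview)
Your argument follows the paper's almost exactly: sandwich $\chi(H_\cQ)$ between resolvents and exploit the $L^2$ bound and $L^2$-continuity of $\mathbf{x}\mapsto g^{z,\mathbf{x}}$ (Lemma~\ref{lem:greenskernelL2} and Remark~\ref{rem:L2Cont}). The paper simply takes $N=1$, writing $\chi(H_\cQ)=(H_\cQ-z)^{-1}\varphi(H_\cQ)(H_\cQ-z)^{-1}$ with $\varphi(x)=(x-z)^2\chi(x)$; since $H^2(0,L_b)\hookrightarrow C^0$ in one dimension, higher powers buy nothing.

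For \eqref{eq:Cauchy-Schwarz} the paper takes a slightly different (and cleaner) route: it chooses bump sequences $f_n^i\to\delta_{\mathbf{x}_i}$, applies the operator inequality $|\langle f_n^0,\chi(H_\cQ)f_n^1\rangle|\le\bigl(\langle f_n^0,\chi(H_\cQ)f_n^0\rangle\langle f_n^1,\chi(H_\cQ)f_n^1\rangle\bigr)^{1/2}$, and passes to the limit using the continuity of the kernel just established. Your approach via the kernel $K$ of $\sqrt{\chi}(H_\cQ)$ also works, but one justification needs tightening: on an infinite graph, $L^\infty$ control of $K$ alone does \emph{not} make the composition integral $\int_\cG \overline{K(\mathbf{u},\mathbf{x})}K(\mathbf{u},\mathbf{y})\,\dd\mathbf{u}$ absolutely convergent. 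What you actually need (and implicitly have) is $K(\cdot,\mathbf{x})\in L^2(\cG)$, which follows because $K(\cdot,\mathbf{x})=(H_\cQ-z_0)^{-N}\bigl[\tilde\psi(H_\cQ)G_{\bar z_0}^{(N)}(\cdot,\mathbf{x})\bigr]$ is the resolvent applied to an $L^2$ function; this should replace the ``$L^\infty$ control'' remark.
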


\begin{proof}
Let us fix $z\in \C$. We set $\varphi(x):= (x-z)^2 \chi(x)$, so that we may find $C$ depending only on the support of $\chi$ such that $\|\varphi\|_{L^{\infty}}\leq C \|\chi\|_{L^{\infty}}$. By the usual functional calculus, we have $\chi(H_{\cQ}) = \big( H_\cQ-z\big{)}^{-1} \varphi\big( H_\cQ\big) \big( H_\cQ-z\big)^{-1}$.

Denote by $g^{z,\mathbf{x}}$ the function $g^{z,\mathbf{x}}(\mathbf{y}) =G_z(\mathbf{x}, \mathbf{y}) = ( H_\cQ-z)^{-1} (\mathbf{x},\mathbf{y})$.
If $f_1,f_2\in \mathscr{H}$, we have using \eqref{eq:GreenSym},
\[
\langle f_1, \chi(H_\cQ) f_2 \rangle_{L^2(\cG)} = \int_{\cG\times \cG} \dd \mathbf{x_0} \dd \mathbf{x_1} \overline{f_1}(\mathbf{x_0}) f_2(\mathbf{x_1})  \langle g^{\bar{z},{\mathbf{x_0}}}, \varphi (H_\cQ) g^{z,{\mathbf{x_1}}} \rangle_{L^2(\cG)}\,.
\]
Therefore, $\chi(H_\cQ)$ has integral kernel $\langle g^{\bar{z},{\mathbf{x_0}}}, \varphi (H_\cQ) g^{z,{\mathbf{x_1}}}\rangle_{L^2(\cG)}$. 

We know from Remark \ref{rem:L2Cont} that $\mathbf{x_0}\mapsto g^{z,\mathbf{x_0}}$ is $L^2$-continuous. On the other hand, from the spectral theorem, $\|\varphi (H_\cQ)\|\le \|\varphi\|_{\infty} \le C \|\chi\|_{\infty}$, as previously observed. We deduce that $(\mathbf{x_0},\mathbf{x_1})\mapsto \langle g^{\bar{z},{\mathbf{x_0}}}, \varphi (H_\cQ) g^{z,{\mathbf{x_1}}} \rangle_{L^2(\cG)}$ is jointly continuous. The bound \eqref{eq:KernLinfBound} also follows using Cauchy-Schwarz and Lemma  \ref{lem:greenskernelL2}.

Finally, let us prove \eqref{eq:Cauchy-Schwarz}. As in the proof of Lemma \ref{lem:Herglotz}, we may use the continuity of the integral kernel to find sequences $f^0_n$, $f^1_n$ of functions in $\mathscr{H}$ such that for $i,j=0,1$, we have
\[
\chi\big{(}H_\cQ\big{)}(\mathbf{x}_i, \mathbf{x}_j) =\lim\limits_{n\rightarrow \infty} \langle f^i_n, \chi(H_\cQ) f^j_n\rangle.
\]
Now, we have
\begin{align*}
 \big|\langle f_n^0, \chi(H_\cQ) f_n^1\rangle\big| &= \big|\langle \chi(H_\cQ)^{1/2} f_n^0, \chi(H_\cQ)^{1/2} f_n^1\rangle\big| \\
& \le \|\chi(H_{\cQ})^{1/2}f_n^0\|\cdot \|\chi(H_{\cQ})^{1/2}f_n^1\|= \sqrt{\langle f_n^0, \chi(H_\cQ) f_n^0\rangle\langle f_n^1, \chi(H_\cQ) f_n^1\rangle},
 \end{align*}
 which converges to  $\sqrt{\chi\big{(}H_\cQ\big{)}(\mathbf{x_0}, \mathbf{x_0}) \chi\big{(}H_\cQ\big{)}(\mathbf{x_1}, \mathbf{x_1})}$. The result follows.
\end{proof}

When $\chi$ is smooth, we can obtain an expression for $\chi(H_\cQ)(\mathbf{x_0}, \mathbf{x_1})$ using the Helffer-Sjöstrand formula. In the following, $\tilde{\chi}$ is an almost-analytic extension of $\chi$, as described in \S \ref{subsec:ConvV1}.

\begin{lemme}\label{lem:IntegralKernel}
Let $\cQ$ be a quantum graph satisfying \eqref{eq:ConditionCompQG} and let $\chi \in C_c^\infty(\R)$. Then we have 
\begin{equation}\label{eq:intKern}
\chi\big(H_\cQ\big)(\mathbf{x_0}, \mathbf{y_0}) = \frac{-1}{2\pi\ii} \int_\C \frac{\partial \tilde{\chi}(z)}{\partial \overline{z}} \big( H_\cQ - z\big)^{-1}(\mathbf{x_0}, \mathbf{y_0}) \,\mathrm{d}z  \wedge \dd \overline{z}\,.
\end{equation}
\end{lemme}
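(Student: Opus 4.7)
The plan is to pass from an operator-valued Helffer--Sjöstrand formula to the desired kernel identity via Fubini and a continuity argument. The scalar Stokes identity $\chi(\lambda) = \frac{-1}{2\pi\ii}\int_\C \frac{\partial\tilde\chi}{\partial\overline z}(z)(\lambda-z)^{-1}\,\dd z\wedge\dd\overline z$ for $\lambda\in\R$ (valid because $\tilde\chi$ has compact support, $\tilde\chi|_\R=\chi$, and the boundary contribution vanishes thanks to \eqref{e:dbar}) lifts, via the bounded Borel functional calculus for the self-adjoint operator $H_\cQ$, to the operator identity
\[
\chi(H_\cQ) = \frac{-1}{2\pi\ii}\int_\C \frac{\partial\tilde\chi}{\partial\overline z}(z)\,(H_\cQ-z)^{-1}\,\dd z\wedge\dd\overline z \qquad\text{in } \mathcal{B}(\mathscr{H}),
\]
where the Bochner integral converges in operator norm since $\|(H_\cQ-z)^{-1}\|\le 1/|\Im z|$ and $|\partial\tilde\chi/\partial\overline z(z)|=O(|\Im z|^2)$; this is the standard Helffer--Sjöstrand formula, see e.g.\ \cite[\S 2.2]{Davies}.

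Pair the above identity with $f_1,f_2\in\mathscr{H}$ and express both sides through their integral kernels: on the left, Lemma~\ref{lem:Simon} yields $\langle f_1,\chi(H_\cQ)f_2\rangle=\iint\overline{f_1(\mathbf{x_0})}\chi(H_\cQ)(\mathbf{x_0},\mathbf{y_0})f_2(\mathbf{y_0})\,\dd\mathbf{x_0}\dd\mathbf{y_0}$, and on the right the pairing becomes $\langle f_1,(H_\cQ-z)^{-1}f_2\rangle=\iint\overline{f_1(\mathbf{x_0})} G_z(\mathbf{x_0},\mathbf{y_0})f_2(\mathbf{y_0})\,\dd\mathbf{x_0}\dd\mathbf{y_0}$ by definition of the Green's function. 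The bounds $|\langle f_1,(H_\cQ-z)^{-1}f_2\rangle|\le \|f_1\|\|f_2\|/|\Im z|$ together with $|\partial\tilde\chi/\partial\overline z|=O(|\Im z|^2)$ on the compact support of $\tilde\chi$ supply an integrable majorant, so Fubini permits swapping the $\dd z\wedge\dd\overline z$ and $\dd\mathbf{x_0}\dd\mathbf{y_0}$ integrations. Writing $\tilde K(\mathbf{x_0},\mathbf{y_0})$ for the right-hand side of \eqref{eq:intKern}, we conclude that $\iint\overline{f_1(\mathbf{x_0})}[\chi(H_\cQ)(\mathbf{x_0},\mathbf{y_0})-\tilde K(\mathbf{x_0},\mathbf{y_0})]f_2(\mathbf{y_0})\,\dd\mathbf{x_0}\dd\mathbf{y_0}=0$ for all $f_1,f_2\in\mathscr{H}$, so the two kernels agree almost everywhere.

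To upgrade a.e.\ equality to pointwise equality, it suffices to show that $\tilde K$ is jointly continuous in $(\mathbf{x_0},\mathbf{y_0})$, since $\chi(H_\cQ)(\mathbf{x_0},\mathbf{y_0})$ already is by Lemma~\ref{lem:Simon}. Continuity of $\tilde K$ follows from dominated convergence in $z$: the map $(\mathbf{x_0},\mathbf{y_0})\mapsto G_z(\mathbf{x_0},\mathbf{y_0})$ is continuous (as recalled in Appendix~\ref{sec:app1}), and a locally uniform kernel bound of the form $|G_z(\mathbf{x_0},\mathbf{y_0})|\le C/|\Im z|$ combined with \eqref{e:dbar} provides an integrable majorant on $\supp\tilde\chi$. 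Since two jointly continuous functions coinciding a.e.\ must coincide everywhere, \eqref{eq:intKern} follows. The main technical obstacle is establishing the locally uniform pointwise bound on the resolvent kernel: the operator estimate $\|(H_\cQ-z)^{-1}\|\le 1/|\Im z|$ is only an $L^2\to L^2$ bound, and promoting it to a pointwise kernel bound requires Sobolev/elliptic regularity for the edge equations \eqref{eq:Eigenedge}, i.e.\ precisely the local information about Green's kernels packaged in Appendix~\ref{sec:app1}.
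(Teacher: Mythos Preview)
Your proof is correct and follows essentially the same route as the paper: the operator-level Helffer--Sj\"ostrand formula, pairing with test functions, Fubini, and identification of kernels. One small point: the bound $|\langle f_1,(H_\cQ-z)^{-1}f_2\rangle|\le \|f_1\|\|f_2\|/|\Im z|$ controls an \emph{integrated} quantity, not the pointwise integrand of the triple integral, so it does not by itself justify Fubini (and for general $f_1,f_2\in L^2(\cG)$ on an infinite graph the integrand need not be absolutely integrable over $\cG\times\cG\times\C$); the paper sidesteps this by restricting to $f_1,f_2$ continuous with compact support, which together with the pointwise bound $|G_z(\mathbf{x_0},\mathbf{y_0})|\le C/|\Im z|$ from Lemma~\ref{lem:BoundGreenImagin} makes the integrand continuous and compactly supported. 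Your explicit upgrade from a.e.\ to pointwise equality via joint continuity of both kernels is a useful clarification that the paper leaves implicit.
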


\begin{proof}
Thanks to the Helffer-Sj\"ostrand formula (\cite[equation 2.2.3]{Davies}), we have
\[
\chi(H_\cQ) = \frac{-1}{2\pi\ii} \int_\C \frac{\partial \tilde{\chi}(z)}{\partial \overline{z}} \big(H_\cQ-z\big)^{-1} \dd z  \wedge \dd \overline{z}\,.
\]

Let $f_1,f_2\in \mathscr{H}$ be continuous and compactly supported. Since all the integrands below are continuous and compactly supported, we may exchange the integrals by Fubini's theorem to obtain
\begin{align*}
\int_\cG \dd & \mathbf{x_0}\int_\cG \dd \mathbf{y_0}\Big[ \frac{-1}{2\pi\ii} \int_\C \frac{\partial \tilde{\chi}(z)}{\partial \overline{z}} \big(H_\cQ-z\big)^{-1}(\mathbf{x_0}, \mathbf{y_0})\,\dd z  \wedge \dd \overline{z}\Big] \overline{f_1(\mathbf{x_0})} f_2(\mathbf{y_0})\\
 &= \frac{-1}{2\pi\ii}  \int_\C \frac{\partial \tilde{\chi}(z)}{\partial \overline{z}} \int_\cG \mathrm{d}\mathbf{x_0}\int_\cG \dd \mathbf{y_0} \big(H_\cQ-z\big{)}^{-1}(\mathbf{x_0}, \mathbf{y_0}) \overline{f_1(\mathbf{x_0})} f_2(\mathbf{y_0}) \,\dd z  \wedge \dd \overline{z}\\
 &= \frac{-1}{2\pi\ii}  \int_\C \frac{\partial \tilde{\chi}(z)}{\partial \overline{z}} \langle f_1, \big{(} H_\cQ-z\big{)}^{-1}f_2 \rangle \,\dd z  \wedge \dd \overline{z}\\
 &= \Big{\langle} f_1, \Big[\frac{-1}{2\pi\ii} \int_\C \frac{\partial \tilde{\chi}(z)}{\partial \overline{z}} \big(H_\cQ-z\big)^{-1} \,\dd z  \wedge \dd\overline{z} \Big] f_2 \Big{\rangle} = \langle f_1, \chi(H_\cQ) f_2\rangle.
\end{align*}
This proves \eqref{eq:intKern}.
\end{proof}

\begin{lemme}\label{lem:FoncOpCont}
Let $\chi\in C_c(\R)$. The map $\mathbf{Q}_*^{D,m,M}\ni [\cQ, \mathbf{x_0}] \mapsto \chi(H_\cQ)(\mathbf{x_0}, \mathbf{x_0})$ is continuous.
\end{lemme}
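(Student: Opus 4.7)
The plan is a two-step strategy: first reduce to smooth $\chi$ by approximation, then handle the smooth case via the Helffer--Sj\"ostrand formula of Lemma~\ref{lem:IntegralKernel} combined with the continuity of the Green's kernel (Theorem~\ref{th:GreenContinuous2}).

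For the reduction step, given $\chi\in C_c(\R)$ with $\supp\chi\subset[-a,a]$, I would pick a sequence $\chi_n\in C_c^\infty(\R)$ with $\supp \chi_n\subset[-a-1,a+1]$ and $\|\chi_n-\chi\|_\infty\to 0$ (by standard mollification). Applying the uniform bound \eqref{eq:KernLinfBound} from Lemma~\ref{lem:Simon} to the bounded compactly supported function $\chi-\chi_n$ yields
\[
\sup_{[\cQ,\mathbf{x_0}]\in \mathbf{Q}_*^{D,m,M}} \big|\chi(H_\cQ)(\mathbf{x_0},\mathbf{x_0})-\chi_n(H_\cQ)(\mathbf{x_0},\mathbf{x_0})\big| \le C\|\chi-\chi_n\|_\infty \xrightarrow[n\to\infty]{} 0,
\]
where $C=C(D,m,M,a)$. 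So the functionals associated to the $\chi_n$ converge uniformly on $\mathbf{Q}_*^{D,m,M}$ to the functional associated to $\chi$, and continuity is preserved by uniform limits. It therefore suffices to prove the statement for $\chi\in C_c^\infty(\R)$.

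For a smooth $\chi$, Lemma~\ref{lem:IntegralKernel} gives
\[
\chi(H_\cQ)(\mathbf{x_0},\mathbf{x_0}) \;=\; \frac{-1}{2\pi\ii}\int_\C \frac{\partial \tilde\chi}{\partial\overline z}(z)\,\mathbf{G}_z([\cQ,\mathbf{x_0}])\,\dd z\wedge\dd\overline z,
\]
where by \eqref{eq:support extension} the integrand is supported in a fixed bounded set in $\C$, and by \eqref{e:dbar} one has $|\partial\tilde\chi/\partial\overline z(z)|\le C_\chi(\Im z)^2$ there. For each fixed $z\in\C\setminus\R$, Theorem~\ref{th:GreenContinuous2} gives that $[\cQ,\mathbf{x_0}]\mapsto \mathbf{G}_z$ is continuous on $\mathbf{Q}_*^{D,m,M}$, so if $[\cQ_n,\mathbf{x}_n]\to [\cQ,\mathbf{x_0}]$ the integrands converge pointwise a.e.\ in $z$. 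To swap limit and integral I would invoke dominated convergence using a uniform bound of the form $|G_z(\mathbf{x_0},\mathbf{x_0})|\le C/|\Im z|^\alpha$ with $\alpha<3$, valid on $\supp\tilde\chi\cap(\C\setminus\R)$ uniformly over $[\cQ,\mathbf{x_0}]\in\mathbf{Q}_*^{D,m,M}$; combined with the $(\Im z)^2$ decay of $\partial_{\overline z}\tilde\chi$, this produces an integrable dominator on the compact set $\supp\tilde\chi$.

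The main obstacle is thus extracting the uniform pointwise bound on $\mathbf{G}_z$, which is where one uses the machinery of Sections \ref{sec:Evolution}--\ref{sec:proof}: on the edge containing $\mathbf{x_0}$ one expands $G_z(\mathbf{x_0},\mathbf{y})$ in the basis $(E_b^z, E_{\widehat b}^z(L_b-\cdot))$, and the coefficients $a(b),a(\widehat b)$ solve the linear system \eqref{eq:CoordGreen} whose norm is controlled through Corollary~\ref{cor:InvertLargeIm} for $\Im z$ large and through standard resolvent estimates $\|(H_\cQ-z)^{-1}\|\le|\Im z|^{-1}$ coupled with the local expansion \eqref{e:firstexpan} for moderate $\Im z$, all uniformly in $[\cQ,\mathbf{x_0}]\in\mathbf{Q}_*^{D,m,M}$ (this is essentially the pointwise content behind Lemma~\ref{lem:BoundGreenImagin}). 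Once the bound is in hand, dominated convergence yields $\chi(H_{\cQ_n})(\mathbf{x}_n,\mathbf{x}_n)\to \chi(H_\cQ)(\mathbf{x_0},\mathbf{x_0})$, and Step 1 upgrades this to all $\chi\in C_c(\R)$.
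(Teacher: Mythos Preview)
Your approach is correct and, for the smooth case, arguably more streamlined than the paper's. The paper does not apply dominated convergence directly to the Helffer--Sj\"ostrand integral; instead it fixes $\eta>0$, splits the integral into the thin strip $\Omega_\eta=[-a-\eta,a+\eta]\times\ii[-\eta,\eta]$ and its complement, bounds the strip contribution by $\varepsilon/3$ using Lemma~\ref{lem:BoundGreenImagin} and~\eqref{e:dbar}, converts the complementary piece via Stokes to a contour integral over $\Gamma_\eta=\partial\Omega_\eta$, and then invokes the Vitali--Porter uniform convergence on that compact contour. Your single dominated-convergence step achieves the same end: the dominator $\big|\partial_{\overline z}\tilde\chi(z)\big|\cdot|\mathbf{G}_z|\le C_\chi\,|\Im z|^2\cdot C/|\Im z|=C'|\Im z|$ is bounded on $\supp\tilde\chi$ and uniform in $[\cQ,\mathbf{x_0}]\in\mathbf{Q}_*^{D,m,M}$, so DCT applies directly. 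The reduction from $C_c$ to $C_c^\infty$ via \eqref{eq:KernLinfBound} is identical in both arguments (the paper does it at the end, you at the beginning).

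One correction: your final paragraph misattributes the source of the uniform bound $|G_z(\mathbf{x_0},\mathbf{x_0})|\le C/|\Im z|$. This is \emph{not} obtained from the evolution-operator machinery of Sections~\ref{sec:Evolution}--\ref{sec:proof} (which only controls things for $\Im z$ large). It is precisely Lemma~\ref{lem:BoundGreenImagin}, proved in the Appendix by a different route: writing $G_z(\mathbf{x_0},\mathbf{y_0})$ as a finite combination of inner products $\langle Y_z^{b_1},(H_\cQ-z)^{-1}\overline{Y_z^{b_2}}\rangle$ etc., with $Y_z^b,Z_z^b$ built from the fundamental solutions $C_b^z,S_b^z$, and then using $\|(H_\cQ-z)^{-1}\|\le|\Im z|^{-1}$. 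Since you do cite Lemma~\ref{lem:BoundGreenImagin} at the end, the logical structure of your proof is intact; just drop the detour through \eqref{eq:CoordGreen} and Corollary~\ref{cor:InvertLargeIm}.
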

\begin{proof}
Let $[\cQ_N, \mathbf{x}_N]$ be a sequence in $\mathbf{Q}_*^{D,m,M}$ converging to $[\cQ_0, \mathbf{x_0}]$.

We first suppose that $\chi\in C_c^\infty$, with $\supp \chi\subset (-a,a)$, $a>0$, and let $\varepsilon>0$. The kernel  $\chi(H_\cQ)(\mathbf{x_0}, \mathbf{x_0})$ then takes the form \eqref{eq:intKern}.
Consider the rectangle $\Omega_\eta := \left[-a-\eta; a+\eta\right] \times \ii \left[-\eta; \eta\right]$. Since $\tilde{\chi}$ satisfies \eqref{e:dbar}, we may use Lemma \ref{lem:BoundGreenImagin} to find $\eta>0$ such that for all $N\in \N\cup \{0\}$,
\[
\Big|\int_{\Omega_\eta} \frac{\partial \tilde{\chi}(z)}{\partial \overline{z}} \big( H_{\cQ_N} - z\big)^{-1}(\mathbf{x}_N,\mathbf{x}_N) \,\dd z \wedge \dd \overline{z} \Big{|} \leq \frac{\varepsilon}{3}.
\]
On the other hand, if $\Gamma_\eta := \partial \Omega_\eta $, then by Stokes' theorem and the fact that $z\mapsto (H-z)^{-1}(\mathbf{x},\mathbf{x})$ is analytic on $\supp\tilde{\chi}\setminus \Omega_{\eta}$, we have
\[
\int_{\C\setminus \Omega_\eta} \frac{\partial \tilde{\chi}(z)}{\partial \overline{z}} \big(H_{\cQ_N}-z\big)^{-1}(\mathbf{x}_N,\mathbf{x}_N)\, \dd z \wedge \dd \overline{z} = \int_{\Gamma_\eta } \tilde{\chi}(z) \big(H_{\cQ_N}-z\big)^{-1}(\mathbf{x}_N,\mathbf{x}_N) \,\dd z\,.
\]
But  $(H_{\cQ_N}-z)^{-1}(\mathbf{x}_N,\mathbf{x}_N)$ converges to  $(H_{\cQ_0}-z)^{-1}(\mathbf{x_0},\mathbf{x_0})$ uniformly on compact subsets of $\C\setminus \R$ by the Vitali-Porter theorem, so we may find $n_0\in \N$ such that for all $N\geq n_0$,
\[
\Big|\int_{\Gamma_\eta } \tilde{\chi}(z) \big(H_{\cQ_0}-z\big)^{-1}(\mathbf{x_0},\mathbf{x_0}) \dd z - \int_{\Gamma_\eta } \tilde{\chi}(z) \big(H_{\cQ_N}-z\big)^{-1}(\mathbf{x}_N,\mathbf{x}_N) \dd z \Big| \leq \frac{\varepsilon}{3}\,.
\]
We deduce from this that for all $N\geq n_0$, we have 
\[
\left|\chi(H_{\cQ_0}) (\mathbf{x_0}, \mathbf{x_0}) -  \chi(H_{\cQ_N}) (\mathbf{x}_N, \mathbf{x}_N)\right| \le \varepsilon\,,
\]
which concludes the proof when $\chi \in C_c^\infty(\R)$.

The general case follows from the density of $C_c^\infty(\R)$ in $C_c(\R)$ along with \eqref{eq:KernLinfBound}.
\end{proof}

\subsection{End of proof of the main result}\label{sec:end}
We may now proceed with the proof of the second part of Theorem \ref{th:Empirical}, which we recall.

\begin{theo}
Let $\cQ_N$ be a sequence of quantum graphs satisfying \eqref{eq:ConditionCompQG} and converging in the sense of Benjamini-Schramm to $\mathbb{P}$. Then for any $\chi \in C_c(\R)$, we have
\begin{align*}
\lim\limits_{N\rightarrow \infty} \int_{\R} \chi(\lambda) \mathrm{d}\mu_{\cQ_N}(\lambda) = \int_{\mathbf{Q}_*} \chi \big{(}H_\cQ\big{)}(\mathbf{x_0}, \mathbf{x_0}) \dd \mathbb{P}[\cQ,\mathbf{x_0}]\,.
\end{align*}
\end{theo}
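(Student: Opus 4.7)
The plan is to rewrite both sides of the claimed identity as integrals of the single bounded continuous functional
\[
F\big([\cQ,\mathbf{x_0}]\big) := \chi(H_\cQ)(\mathbf{x_0},\mathbf{x_0})
\]
against $\nu_{\cQ_N}$ and against $\mathbb{P}$ respectively, and then to conclude by the weak-$*$ convergence $\nu_{\cQ_N}\To\mathbb{P}$ on the Polish space $\mathbf{Q}_*^{D,m,M}$ guaranteed by Corollary~\ref{cor:BSComp}. The required properties of $F$, namely continuity on $\mathbf{Q}_*^{D,m,M}$ and a uniform pointwise bound, are furnished by Theorem~\ref{th:IntegralKernelsAreCool} through Lemma~\ref{lem:FoncOpCont}, and by \eqref{eq:KernLinfBound} respectively.

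First I would identify the left-hand side. Because $\cQ_N$ is finite, $H_{\cQ_N}$ has compact resolvent, and only finitely many eigenvalues $\lambda_k$ lie in $\supp\chi$; thus $\chi(H_{\cQ_N})$ is finite-rank and trace-class, with
\[
\Tr\chi(H_{\cQ_N}) = \sum_{\lambda_k\in\Sigma(\cQ_N)} \chi(\lambda_k) = \cL(\cQ_N)\int_\R \chi\,\dd\mu_{\cQ_N}.
\]
By Lemma~\ref{lem:Simon}, $\chi(H_{\cQ_N})$ has a jointly continuous integral kernel, and writing $\chi(H_{\cQ_N})=\sum_k\chi(\lambda_k)\,|\psi_k\rangle\langle\psi_k|$ as a finite sum gives, by the same computation as in Lemma~\ref{lemTrace},
\[
\Tr\chi(H_{\cQ_N}) = \int_{\cG_N}\chi(H_{\cQ_N})(\mathbf{x_0},\mathbf{x_0})\,\dd\mathbf{x_0}.
\]
Dividing by $\cL(\cQ_N)$ and recognising the definition of $\nu_{\cQ_N}$ yields
\[
\int_\R \chi\,\dd\mu_{\cQ_N} = \int_{\mathbf{Q}_*} F\big([\cQ,\mathbf{x_0}]\big)\,\dd\nu_{\cQ_N}[\cQ,\mathbf{x_0}].
\]

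Next I would pass to the limit. The conditions \eqref{eq:ConditionCompQG} are stable under the topology of $\mathbf{Q}_*$ (as verified inside the proof of Lemma~\ref{lem:QCompact}), so $\mathbf{Q}_*^{D,m,M}$ is closed and $\mathbb{P}$ is supported in it as well. On this closed set $F$ is continuous by Lemma~\ref{lem:FoncOpCont} and bounded by $C\|\chi\|_\infty$ uniformly, with $C$ depending only on $\supp\chi$ and $D,m,M$, by \eqref{eq:KernLinfBound}. Weak-$*$ convergence of probability measures on the Polish space $\mathbf{Q}_*^{D,m,M}$ against bounded continuous functions therefore yields
\[
\int_{\mathbf{Q}_*} F\,\dd\nu_{\cQ_N} \To \int_{\mathbf{Q}_*} F\,\dd\mathbb{P},
\]
which, combined with the previous step, is exactly the stated equality.

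Once the continuity of $F$ is granted, the theorem reduces to a one-line application of weak-$*$ convergence; in particular the Helffer--Sj\"ostrand detour through $\mathbb{E}_\mathbb{P}[\Im \mathbf{G}_{\lambda+\ii\varepsilon}]$ used in Lemma~\ref{lem:FirstPart} can be bypassed entirely. The genuine difficulty is therefore entirely upstream, concentrated in Theorem~\ref{th:IntegralKernelsAreCool} and ultimately in the continuity of the Green's kernel $\mathbf{G}_z$ proved in Sections~\ref{sec:Evolution}--\ref{sec:proof} via the evolution-operator machinery. That is where the main obstacle lies, and where I would expect (and where the paper locates) the bulk of the technical work.
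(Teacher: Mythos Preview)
Your argument is correct and takes a genuinely more direct route than the paper. The paper's proof first establishes the result for smooth $\chi$ by invoking the Helffer--Sj\"ostrand estimate \eqref{eq:ExprIntKern} to approximate $\chi(H_\cQ)(\mathbf{x_0},\mathbf{x_0})$ by $\frac{1}{\pi}\int\chi(\lambda)\Im G_{\lambda+\ii\varepsilon}(\mathbf{x_0},\mathbf{x_0})\,\dd\lambda$, integrates against $\mathbb{P}$, and then feeds this back into Lemma~\ref{lem:FirstPart}; the passage to $\chi\in C_c$ is done by density. You instead identify $\int\chi\,\dd\mu_{\cQ_N}$ directly with $\int F\,\dd\nu_{\cQ_N}$ via the finite-rank trace formula on $\cQ_N$, and conclude in one line from weak convergence against the bounded continuous functional $F$ supplied by Lemma~\ref{lem:FoncOpCont} and \eqref{eq:KernLinfBound}. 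This is cleaner for the stated equality and you correctly locate the real analytic work upstream in Theorem~\ref{th:GreenContinuous2}. What the paper's detour buys is that it simultaneously links the limit to the resolvent expression $\lim_{\varepsilon\downarrow 0}\frac{1}{\pi}\int\chi(\lambda)\mathbb{E}_\mathbb{P}[\Im\mathbf{G}_{\lambda+\ii\varepsilon}]\,\dd\lambda$, i.e.\ the first equality in \eqref{eq:ConvSpecMeas}, which is the form most useful for density-of-states computations; your route yields only the second equality, so Lemma~\ref{lem:FirstPart} would still be needed separately for the full Theorem~\ref{th:Empirical}. One small quibble: the weak-$*$ convergence $\nu_{\cQ_N}\to\mathbb{P}$ is the \emph{hypothesis} of the theorem, not a consequence of Corollary~\ref{cor:BSComp} (which only extracts subsequential limits).
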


This result implies that Lemma~\ref{lem:FirstPart} remains true for $\chi\in C_c(\R)$, concluding the proof of Theorem~\ref{th:Empirical}. The equality of the two limits on the RHS of \eqref{eq:ConvSpecMeas} for $\chi\in C_c(\R)$ follows by two approximations, the first using $|\lim_\varepsilon \int_{\R} [\chi(\lambda)-\varphi(\lambda)]\mathbb{E}[\Im \mathbf{G}_{\lambda+\ii\varepsilon}]\dd\lambda| \le \|\chi-\varphi\|_{\infty}\lim_{\varepsilon} \int_K \mathbb{E}[\Im \mathbf{G}_{\lambda+\ii\varepsilon}]\,\dd\lambda \le C \|\chi-\varphi\|_{\infty}$ for some compact $K$, where $C_c^\infty\ni \varphi\approx \chi$ and $C=\lim_{\varepsilon}\int_K\mathbb{E}[\Im G_{\lambda+\ii\varepsilon}]\,\dd\lambda\le \lim_{\varepsilon}\int h(\lambda)\mathbb{E}[\Im G_{\lambda+\ii\varepsilon}]\,\dd\lambda = \int_{\mathbf{Q}_{\ast}}h(H_{\cQ})(\mathbf{x_0},\mathbf{x_0})\,\dd\mathbb{P}[\cQ,\mathbf{x_0}]\le C'\|h\|_{\infty}$, with $\mathbf{1}_K\le h\in C_c^\infty(\R)$ and we used \eqref{eq:KernLinfBound}. The second approximation uses \eqref{eq:KernLinfBound}.

\begin{proof}
We first consider the case $\chi \in C_c^\infty(\R)$, with support in $(-a,a)$. We may then compute $\chi(H_\cQ) (\mathbf{x_0}, \mathbf{x_0})$ using \eqref{eq:intKern}. Arguing as in Lemma~\ref{lem:FoncOpCont}, the integral over $\Omega_\varepsilon$ is bounded by $C \varepsilon$ for some $C$ depending only on $\chi$, and on $D,m,M$. As for the integral on $\Gamma_\varepsilon$, we may express it using Lemma \ref{lem:HerglotzRocks} (cf. Appendix~\ref{sec:app1}). We obtain that there exists $C'>0$ such that
\begin{equation}\label{eq:ExprIntKern}
\Big{|}\chi\big{(}H_\cQ\big{)}(\mathbf{x_0}, \mathbf{x_0})-  \frac{1}{\pi}  \int_\R \chi(\lambda) \Im \Big{[} \big(H_\cQ - \lambda - \ii\varepsilon\big)^{-1}(\mathbf{x_0},\mathbf{x_0}) \Big{]} \mathrm{d}\lambda\Big{|} \leq C' \sqrt{\varepsilon}.
\end{equation}
The constant $C'$, which is given by Lemma \ref{lem:HerglotzRocks}, depends on $\chi$, $a$, and on $\big( H_\cQ-\ii\big{)}^{-1}(\mathbf{x_0},\mathbf{x_0})$. By Lemma \ref{lem:BoundGreenImagin}, the constant $C'$ can be taken to depend only on $\chi$, $a$, $D$, $m$ and on $M$.

We deduce that
\begin{align*}
\int_{\mathbf{Q}_*} \chi \big(H_\cQ\big)(\mathbf{x_0}, \mathbf{x_0}) \dd \mathbb{P}[\cQ,\mathbf{x_0}] 
&= \frac{1}{\pi} \int_{\mathbf{Q}_*} \dd \mathbb{P}[\cQ,\mathbf{x_0}]\int_\R \chi(\lambda) \Im \left[ G_{\lambda+\ii\varepsilon}(\mathbf{x_0},\mathbf{x_0}) \right] \dd \lambda +O(\sqrt{\varepsilon})\\
&=  \frac{1}{\pi} \int_\R \dd\lambda \chi(\lambda)\int_{\mathbf{Q}_*} \dd \mathbb{P}[\cQ,\mathbf{x_0}] \Im \left[ G_{\lambda+\ii\varepsilon}(\mathbf{x_0},\mathbf{x_0}) \right]  +O(\sqrt{\varepsilon}),
\end{align*}
where we exchanged the integrals using Fubini.
Taking the limit $\varepsilon\downarrow 0$, we obtain
\begin{align*}
\int_{\mathbf{Q}_*} \chi \big{(}H_\cQ\big{)}(\mathbf{x_0}, \mathbf{x_0}) \dd \mathbb{P}[\cQ,\mathbf{x_0}] 
&= \lim\limits_{\varepsilon\downarrow 0}  \frac{1}{\pi} \int_\R \mathrm{d}\lambda \chi(\lambda)\int_{\mathbf{Q}_*} \dd \mathbb{P}[\cQ,\mathbf{x_0}] \Im \left[ G_{\lambda+\ii\varepsilon}(\mathbf{x_0},\mathbf{x_0}) \right] \\
&=\lim\limits_{N\rightarrow \infty} \int_{\R} \chi(\lambda) \mathrm{d}\mu_{\cQ_N}(\lambda),
\end{align*}
thanks to Lemma \ref{lem:FirstPart}. This concludes the proof when $\chi \in C_c^\infty(\R)$.

Now, let $\chi\in C_c(\R)$ and choose an approximation $\varphi\in C_c^\infty(\R)$ supported in some compact $K\supseteq \supp \chi$. We have
\begin{multline*}
\Big{|} \int_\R \chi(\lambda)\, \dd\mu_{\cQ_N}(\lambda) - \int_{\mathbf{Q}_*} \chi \big{(}H_\cQ\big{)}(\mathbf{x_0}, \mathbf{x_0}) \dd \mathbb{P}[\cQ,\mathbf{x_0}]\Big{|} \\
\leq \Big{|} \int_\R \varphi(\lambda)\, \mathrm{d}\mu_{\cQ_N}(\lambda) -\int_{\mathbf{Q}_*} \varphi \big{(}H_\cQ\big{)}(\mathbf{x_0}, \mathbf{x_0}) \dd \mathbb{P}[\cQ,\mathbf{x_0}]\Big{|} +  \int_\R \big{|}\chi(\lambda)-\varphi(\lambda)\big{|}\, \mathrm{d}\mu_{\cQ_N}(\lambda) \\
+ \int_{\mathbf{Q}_*}  \big{|}\chi \big(H_\cQ\big)(\mathbf{x_0}, \mathbf{x_0})-\varphi \big{(}H_\cQ\big{)}(\mathbf{x_0}, \mathbf{x_0})\big|\,\dd \mathbb{P}[\cQ,\mathbf{x}_0]\,.
\end{multline*}
Let $\tilde{\chi}\in C_c^\infty(\R)$ be equal to 1 on $K$ and denote
\[
C:= \max \Big{(} 1, \sup_{N\in \N} \int_\R \tilde{\chi}(\lambda)\, \mathrm{d}\mu_{\cQ_N}(\lambda) \Big{)}\,,
\]
which is finite, since the sequence in $N$ converges.

Given $\varepsilon>0$, assume that $|\chi(t)-\varphi(t)|\leq \frac{\varepsilon}{2C} \tilde{\chi}(t)$. Then using \eqref{eq:KernLinfBound}, we deduce that
\[
\limsup\limits_{N\To \infty} \Big{|} \int_\R \chi(\lambda)\, \mathrm{d}\mu_{\cQ_N}(\lambda) -\int_{\mathbf{Q}_*} \chi \big{(}H_\cQ\big{)}(\mathbf{x_0}, \mathbf{x_0}) \dd \mathbb{P}[\cQ,\mathbf{x_0}]\Big{|}<\varepsilon\,.
\]
As this holds for any $\varepsilon>0$, the result follows.
\end{proof}

\section{Examples}\label{sec:exam}

\subsection{Rooted convergence}\label{subsec:rooted}

Before delving into Benjamini-Schramm convergence, let us consider examples where $[\cQ_N,\mathbf{x}_N]\To [\cQ,\mathbf{x}]$ and the consequence of Theorem~\ref{th:GreenContinuous2}.

\begin{enumerate}[(a)]
\item Fix an infinite quantum graph $\cQ$, fix some $\mathbf{x_0}\in \cG$ and consider a sequence of growing balls $\cQ_N\subset \cQ$ centered at $\mathbf{x_0}$, of combinatorial radius $N$. Then $d([\cQ_N,\mathbf{x_0}],[\cQ,\mathbf{x_0}]) = \frac{1}{1+N} \To 0$. Consequently, we have $G_z^{\cQ_N}(\mathbf{x_0},\mathbf{x_0}) \To G_z^{\cQ}(\mathbf{x_0},\mathbf{x_0})$.

This pointwise result also holds strongly: we know from \cite[Theorem 17]{Kuch} that functions of compact support on $\cQ$ form a core, so using \cite[Theorem VIII.25]{RS} we have $(H_{\cQ_N}-z)^{-1} \xrightarrow{s} (H_{\cQ}-z)^{-1}$.
\item Given a quantum graph $\cQ$, Theorem~\ref{th:GreenContinuous2} implies that $G_z^{\cQ}(\mathbf{x_0},\mathbf{x_0})$ is continuous under varying the data $(L_b,W_b,U_v)_{b\in \mathcal{B},v\in V}$.

In the special case where $\cQ$ is a tree with Kirchhoff conditions and zero potential, the continuity of $G_z^{\cQ}(\mathbf{x_0},\mathbf{x_0})$ in $(L_b)_{b\in \mathcal{B}}$ was proved in \cite{ASW06} using special arguments
(see also \cite{BeKu12}).
\end{enumerate}

\medskip

We now turn to Benjamini-Schramm (BS) convergence. It is useful to recall some examples in the discrete setting. Roughly speaking, a sequence $(G_N)$ converges to $G$ in the BS sense if for any $k$, a $k$-ball taken around a random point in $G_N$ looks like a $k$-ball taken at random in $G$. In view of this, one sees that in the BS sense,
\begin{itemize}
\item Cycle graphs $C_N$ converge to $\Z$,
\item lattice cubes $\Lambda_N=\{1,\dots,N\}^d$ converge to $\Z^d$,
\item $d$-regular graphs with a negligible number of cycles converge to the $d$-regular tree $\mathbb{T}$.
\item If we turn to random graphs, it is also true that Erd\"os-R\'enyi graphs with edge probability $p(N)= c/N$ converge a.s. to the Poisson-Galton-Watson tree with mean $c$. 
\item Sequences of balls $T_N=B_{\mathbb{T}_q}(o,N)$ in a $(q+1)$-regular tree $\mathbb{T}_q$ \emph{do not} converge to $\mathbb{T}_q$ in the BS sense. The problem is that $k$-balls near the boundary of $T_N$ look very different than $k$-balls in $\mathbb{T}_q$, and there is a huge number of such problematic points (as $\frac{|\partial T_N|}{|T_N|} \To \frac{q-1}{q}>0$), so this problem cannot be neglected. It turns out that the graphs $T_N$ do have a BS limit: it is a \emph{random rooted canopy tree}; see \cite{AWcan} and \cite[Chapter 14]{Gab}.
\end{itemize}

With these basic examples in the discrete setting, we now consider quantum graphs.

\subsection{Graphs with a single data}\label{subsec:singledat}
Suppose that $(G_N)$ are discrete graphs which converge in the sense of Benjamini-Schramm to some $\rho\in\cP( \mathscr{G}_*)$, where $\mathscr{G}_*$ is the set of equivalence classes of discrete rooted graphs. Endow all edges/vertices of $G_N$ with the same data $(L_\star,W_\star,U_\star)$, in particular $\cQ_N$ is equilateral (meaning that all edges have the same length). Then $\cQ_N = (G_N,L_\star,W_\star,U_\star)$ converges in the sense of Benjamini-Schramm to $\mathbb{P}\in \cP(\mathbf{Q}_*)$ defined by
\[
\mathbb{E}_{\mathbb{P}}(f) = \frac{1}{L_\star\mathbb{E}_{\rho}(\deg o)}\int_{\mathscr{G}_{\ast}}\sum_{o'\sim o} \int_0^L f[G,L_\star,W_\star,U_\star,(b_{o'},x_0)]\,\dd x_0\,\dd \rho([G,o]) \,,
\]
where $b_{o'}=(o,o')$. To see this, note that $\mathbb{E}_{\nu_{\cQ_N}}(f)= \frac{1}{L_\star\sum_{v\in V_N}\deg(v)}\sum_{v\in V_N} F[G_N,v]$, where $F[G,v] = \sum_{w\sim v}\int_0^{L_\star} f[G,L_\star,W_\star,U_\star,b_{(v,w)},x_0]\,\dd x_0$. If $f$ is continuous on $\mathbf{Q}_*$, then $F$ is continuous on $\mathscr{G}_*$ and the result follows.

\subsection{\texorpdfstring{\textit{N}}{N}-lifts}\label{subsec:nlifts}
Fix a finite quantum graph $\cQ_1$. Consider a (connected) $N$-lift $G_N$ of the underlying combinatorial graph $G_1=(V,E)$. This is just an $N$-cover over $G$. Lift the data naturally to $G_N$ by $L_{(v,w)} = L_{(\pi_N v,\pi_Nw)}$, $W_{(v,w)}=W_{(\pi_Nv,\pi_Nw)}$, $U_v = U_{\pi_N v}$, where $\pi_N:G_N\To G_1$ is the covering projection. Then we get a quantum graph $\cQ_N$ which we call the $N$-lift of $\cQ_1$.

Many studies have been recently devoted to $N$-lifts of combinatorial graphs, as they are a natural model of random irregular graphs. In particular, we know that they are typically connected and most of their points have a large injectivity radius---see \cite[\S~4.2]{Bornew}, \cite[Lemma 9]{BDGHT}. More precisely, the following condition \textbf{(BST)} holds generically:

\smallskip

\textbf{(BST)} For any $r$, $\frac{\#\{v\in V_N:\rho_{G_N}(v)<r\}}{N} \To 0$, where $\rho_{G_N}(v)$ is the injectivity radius at $v\in V_N$, i.e. the largest $\rho$ such that $B_{G_N}(x,\rho)$ is a tree.

\smallskip

It follows that the combinatorial $N$-lifts $(G_N)$ generically converge to the universal cover $\mathbb{T}$ of $G_1$; more precisely to $\rho = \frac{1}{|G_1|}\sum_{v\in G_1} \delta_{[\T,\widetilde{v}]}$. In fact, let us prove along the same lines that if $(G_N)$ is a sequence of $N$-lifts satisfying \textbf{(BST)}, then $\cQ_N$ converge to the universal cover $\cT$ of the quantum graph $\cQ_1$, more precisely, to the measure
\begin{equation}\label{e:pforlifts}
\mathbb{P} = \frac{1}{\sum_{b\in \mathcal{B}_1} L_b}\sum_{b\in \mathcal{B}_1} \int_0^{L_b} \delta_{[\mathbb{T},\tilde{L}^1,\tilde{W}^1,\tilde{U}^1,(\tilde{b},x_0)]}\,\dd x_0\,,
\end{equation}
where $(L^1,W^1,U^1)$ is the data on the base graph $G_1$.

Consider the set $\mathscr{A}$ of continuous functions on $\mathbf{Q}_{\ast}^{D,m,M}$, which ``depend only on a finite-size neighborhood of the root''. That is, let $\mathscr{A} = \bigcup_{r\in \N} \mathscr{A}_r$, where
\begin{multline}\label{e:ar}
\mathscr{A}_r = \big\{ f:\mathbf{Q}_{\ast}^{D,m,M}\to \R : f \text{ is bounded, continuous and } \\
\qquad\qquad\qquad\quad  f([G,L,W,U,\mathbf{x}]) = f([G',L',W',U',\mathbf{x'}]) \\
 \text{ if } [B_{G^{\mathbf{x}}}(v_{\mathbf{x}},  r),L_{|B_{G^{\mathbf{x}}}(v_{\mathbf{x}},  r)}^{\mathbf{x}},U_{|B_{G^{\mathbf{x}}}(v_{\mathbf{x}},  r)}^{\mathbf{x}},\mathbf{x}]=[B_{G^{\mathbf{x'}}}(v_{\mathbf{x'}},  r),L_{|B_{G^{\mathbf{x'}}}(v_{\mathbf{x'}},  r)}'^{\mathbf{x'}},U_{|B_{G^{\mathbf{x'}}}(v_{\mathbf{x'}},  r)}'^{\mathbf{x'}},\mathbf{x'}] \big\} \, .
\end{multline}
To prove the asserted convergence to $\mathbb{P}$ in \eqref{e:pforlifts}, it suffices to show that for any $f\in \mathscr{A}$,
\begin{equation}\label{e:bsco}
\frac{1}{2\cL(\cQ_N)}\sum_{b\in \mathcal{B}_N}\int_0^{L_b} f(\cQ_N,\mathbf{x_0})\,\dd \mathbf{x_0} \To \frac{1}{\sum_{b\in \mathcal{B}_1} L_b}\sum_{b\in \mathcal{B}_1} \int_0^{L_b} f[\mathbb{T},L^1,W^1,U^1,\mathbf{x_0}]\,\dd \mathbf{x_0}
\end{equation}
(instead of proving it for all $f\in C(\mathbf{Q}_{\ast}^{D,m,M})$). This follows from the Stone-Weierstrass and Prokhorov theorems. In detail:

\begin{enumerate}[(a)]
\item $\mathscr{A}\cap C_0(\mathbf{Q}_{\ast}^{D,m,M})$ is an algebra of continuous functions. It separates points. In fact, if $[\cQ_1,\mathbf{x_1}]\neq [\cQ_0,\mathbf{x_0}]$, we may find $r\in \N$ with $d([\cQ_1,\mathbf{x_1}],[\cQ_0,\mathbf{x_0}]) > \frac{1}{1+r}$. Let $\phi \in C_c^\infty([0, \infty))$ be such that $\phi=1$ on $[0,\frac{1}{r+2}]$ and $\phi(t)=0$ if $x\geq \frac{1}{r+1}$. We set $\chi([\cQ,\mathbf{x}] ):= \phi \left(d([\cQ,\mathbf{x}], [\cQ_0,\mathbf{x_0}])\right)$. 

Up to taking $r$ larger, we may suppose that the ball centred at $[\cQ_0,\mathbf{x_0}]$ and of radius $\frac{1}{r+1}$ is included in $\mathbf{Q}_{\ast}^{D,m,M,\delta}$ for some $\delta>0$, so that $\chi \in C_0(\mathbf{Q}_{\ast}^{D,m,M})$, and $\chi ([\cQ_1,\mathbf{x_1}] ) \neq \chi ([\cQ_0,\mathbf{x}_0] )$. Furthermore, by definition of the distance $d$, we clearly have $\chi \in \mathscr{A}_{r+1}$. 
\item Now Lemma~\ref{lem:QCompact} implies that $\mathbf{Q}_\ast^{D,m,M}$ is locally compact, it also implies that $\nu_{\cQ_N}$ is tight. Using the previous point along with \cite[Corollary V.8.3]{Conway}, we see that $\overline{\mathscr{A}\cap C_0(\mathbf{Q}_{\ast}^{D,m,M})} = C_0(\mathbf{Q}_\ast^{D,m,M})$. Combining \cite[Theorems 13.6, 13.11, 13.34]{Klenke} and an $\varepsilon$-argument as in \cite[Corollary 15.3]{Klenke} thus shows it suffices to prove \eqref{e:bsco} for $f\in \mathscr{A}\cap C_0(\mathbf{Q}_{\ast}^{D,m,M})$.
\end{enumerate}

So let $f\in \mathscr{A}_r$ and let us show that \eqref{e:bsco} holds for $f$. We have
\[
\sum_{b\in \mathcal{B}_N}\int_0^{L_b} f[\cQ_N,\mathbf{x_0}] = \sum_{v\in V_N} \sum_{w\sim v} \int_0^{L_{b_{vw}}}f[\cQ_N,(b_{vw},x)] = \sum_{v\in V_1} \sum_{w\in \pi_N^{-1} v} \sum_{z\sim w} \int_0^{L_{b_{wz}}} f[\cQ_N,(b_{wz},x)]
\]
where $b_{vw}=(v,w)$. Suppose $\rho_{G_N}(w)\ge r$. Then $B_{G_N}(w,r)$ is a tree. If $w\in \pi_N^{-1}v$, then this tree is precisely $B_{\mathbb{T}}(v,r)$, by definition of the universal cover (and because $\mathbb{T}$ is also the universal cover of $G_N$). Since data is lifted naturally, it follows that $[B_{G_N}(w,r),L^N,W^N,U^N,(b_{wz},x)]=[B_{\mathbb{T}}(v,r),L^1,W^1,U^1,(b_{vu},x)]$, where $u=\pi_N z$. As $f\in \mathscr{A}_r$, we get $f[\cQ_N,(b_{wz},x)]=f[\cT,(b_{vu},x)]$. The $w$ with $\rho_{G_N}(w)<r$ are considered as errors. We thus get
\begin{multline*}
\left|\frac{1}{N}\sum_{b\in \mathcal{B}_N}\int_0^{L_b} f[\cQ_N,\mathbf{x_0}]\,\dd \mathbf{x_0} - \sum_{b\in \mathcal{B}_1} \int_0^{L_b} f[\mathbb{T},L^1,W^1,U^1,\mathbf{x_0}]\,\dd \mathbf{x_0}\right| \\
 \le 2D\overline{L}\|f\|_{\infty}\cdot \frac{\# \{w\in V_N:\rho_{G_N}(w)<r\}}{N}
\end{multline*}
which tends to zero by \textbf{(BST)}. Finally, $2\cL(\cQ_N) = \sum_{b\in \mathcal{B}_N} L_b^N = N \sum_{b\in \mathcal{B}_1} L_b^1$, since each edge has precisely $N$ lifts. We thus showed the limit is the measure $\mathbb{P}$ given in \eqref{e:pforlifts}.

The universal cover $\cT$ endowed with such lifted data is studied in \cite{AISW}, where it is shown that $\cT$ has bands of absolutely continuous spectra on which the Green's kernel exists and is continuous. It follows from Theorem~\ref{th:Empirical} that $\mu_{\cQ_N}(I) \To \frac{1}{\pi\sum_{b\in \mathcal{B}_1} L_b}\sum_{b\in \mathcal{B}_1} \int_I\int_0^{L_b} \Im G_{\lambda+\ii 0}(\mathbf{x_0},\mathbf{x_0})\,\dd\mathbf{x_0}\,\dd\lambda$ for $I$ in such band. In the very special case of a regular graph with a single $\delta$-data $(L,W,\alpha)$, the density $\Im G_{\lambda+\ii 0}(\mathbf{x_0},\mathbf{x_0})$ can be computed explicitly \cite{ISW} and is in fact independent of $\mathbf{x_0}$ when $W=\alpha=0$.
 
\subsection{Graphs with a random data}
Let $(G_N)$ be a sequence of finite $d$-regular graphs satisfying \textbf{(BST)}. Endow $G_N$ with i.i.d. edge lengths $(L_e^{\omega})$ and i.i.d. coupling constants $(\alpha_v^{\omega})$. More precisely, let $\Omega_N^{(1)}=\R^{E(G_N)}$, $\cP_N^{(1)} = \bigotimes_{e\in E(G_N)}\nu_1$ on $\Omega^{(1)}_N$, $\Omega_N^{(2)}=\R^{V(N)}$, $\cP_N^{(2)}=\bigotimes_{v\in V(N)} \nu_2$ on $\Omega_N^{(2)}$, $\Omega_N=\Omega_N^{(1)}\times\Omega_N^{(2)}$ with $\cP_N=\cP_N^{(1)}\otimes\cP_N^{(2)}$. Let $\widetilde{\Omega} = \prod_{N\in\N}\Omega_N$ and $\cP$ a probability on $\widetilde{\Omega}$ having marginal $\cP_N$ on $\Omega_N$ (e.g. $\cP=\bigotimes \cP_N$). Then we obtain quantum graphs $\cQ_{\omega_N}=(G_N,L^{\omega_N^{(1)}},U^{\omega_N^{(2)}})$ for each $\omega_N=(\omega_N^{(1)},\omega_N^{(2)})$. Here $U^{\omega_N^{(2)}}$ is the unitary matrix encoding the boundary conditions $f_b(0)=f_{b'}(0)=:f(v)$ if $o(b)=o(b')=v$ and $\sum_{b,o(b)=v} f_b'(0)= \alpha_v^{\omega_N^{(2)}}f(v)$.

Next, let $\T$ be the $d$-regular tree and consider the probability space $(\Omega,\mathbf{P})$, where $\Omega=\R^{E(\T)}\times\R^{V(\T)}$ and $\mathbf{P} = \bigotimes_{e\in E(\T)}\nu_1 \bigotimes_{v\in V(\T)}\nu_2$. We obtain an analogous quantum tree $(\mathbb{T},L^{\omega^{(1)}},U^{\omega^{(2)}})$.

We assume the measures $\nu_1,\nu_2$ have compact support:
\[
\supp \nu_1 \subseteq [L_0,L_1]\,, \qquad \supp \nu_2 \subseteq [-A,A] \,,
\]
for some $L_0,L_1,A>0$.

Then for $\cP$-a.e. $(\omega_N)_{N\in\N}\in \widetilde{\Omega}$, the sequence $\cQ_{\omega_N}$ has a local weak limit $\mathbb{P}$ which is concentrated on $\{[\mathbb{T},L^{\omega^{(1)}},U^{\omega^{(2)}},\mathbf{x}_0]:(\omega^{(1)},\omega^{(2)})\in \Omega\}$. The measure $\mathbb{P}$ acts as follows:
\[
\mathbb{E}_{\mathbb{P}}(f) = \frac{1}{\mathbf{E}(L_{b_o}^{\omega^{(1)}})}\mathbf{E}\bigg(\int_0^{L_{b_o}^{\omega^{(1)}}}f([\T,L^{\omega^{(1)}},U^{\omega^{(2)}},(b_o,x_0)])\,\dd x_0\bigg) ,
\]
where $b_o\in \mathcal{B}(\T)$ is fixed and arbitrary.

 If $G_1$ is a finite, possibly irregular graph, $G_N$ is an $N$-lift of $G_1$ satisfying \textbf{(BST)}, and we endow $G_N$ with i.i.d. lengths and couplings, then $\cQ_{\omega_N}$ converge to the probability measure $\mathbb{P}$ concentrated on the universal cover $\T=\widetilde{G}_1$ of $G_1$, defined by
\[
\mathbb{E}_{\mathbb{P}}(f) = \frac{1}{\mathbf{E}(L_e^{\omega^{(1)}})|\mathcal{B}_1|}\sum_{b\in \mathcal{B}_1}\mathbf{E}\bigg( \int_0^{L_b^{\omega^{(1)}}} f[\widetilde{G}_1,L^{\omega^{(1)}},U^{\omega^{(2)}},(\tilde{b},x_0)]\,\dd x_0\bigg)
\]

We only prove the first claim. To lighten the notation, we henceforth remove the superscripts in $\omega_N$ and $\omega$.

Consider the algebra $\mathscr{A}$  introduced in \eqref{e:ar}. Arguing as before,
it suffices to show that there exists $\Omega_0 \subseteq \widetilde{\Omega}$ with $\mathcal{P}(\Omega_0)=1$ such that for any $(\omega_N) \in \Omega_0$ and any $f\in \mathscr{A}\cap C_0(\mathbf{Q}_{\ast}^{D,m,M})$,
\begin{equation}\label{eq:lln}
\lim_{N\to\infty} \frac{1}{2\cL(\cQ_{\omega_N})}\sum_{b\in \mathcal{B}_N}\int_0^{L_b^{\omega_N}}f(\cQ_{\omega_N},\mathbf{x}_0)\,\dd\mathbf{x}_0 = \frac{\mathbf{E}(\int_0^{L_{b_o}^{\omega}}f[\mathbb{T},L^{\omega},U^{\omega},(b_o,x_0)]\dd x_0)}{\mathbf{E}(L_{b_o}^{\omega})}\,.
\end{equation}
We first note that $\frac{1}{|\mathcal{B}_N|}\sum_{b\in \mathcal{B}_N} L_b^{\omega_N} \To \mathbf{E}(L_e^{\omega})$ by the law of large numbers. Next, let $\mathcal{E}$ the expectation w.r.t. $\mathcal{P}$. Given $f\in \mathscr{A}_r$, let
\[
Y_b=Y_b^N = \int_0^{L_b^{\omega_N}}f([G_N,L^{\omega_{N}},U^{\omega_N},(b,x)])\dd x - \cE\bigg[\int_0^{L_b^{\omega_N}}f([G_N,L^{\omega_{N}},U^{\omega_N},(b,x)])\dd x\bigg]
\]
and $S_N = \frac{1}{|\mathcal{B}_N|}\sum_{b\in \mathcal{B}_N} Y_b$

Then $Y_b^N$ only depends on $(\omega_e,\omega_v)_{e\in E_{G_N}(o(b),r),v\in B_{G_N}(o(b),r+1)}$, since $f([G_N,L^{\omega_N},U^{\omega_N},(b,x)]) = f([G_N,\tilde{L}^{\omega_N},\tilde{U}^{\omega_N},(b,x)])$ if $L^{\omega_N} = \tilde{L}^{\omega_N}$ and $U^{\omega_N}=\tilde{U}^{\omega_N}$ on the graph $B_{G_N}(o(b),r+1)$. Hence, $Y_b^N$ and $Y_{b'}^N$ are independent if $d_{G_N}(o(b),o(b'))>2r+2$. Moreover, each $Y_b^N$ is bounded by $2L_1\|f\|_\infty$. Using such independence at a distance, one can adapt the proof of the strong law of large numbers using $4$-th moments to conclude that $S_N \To 0$ $\cP$-a.s.,
i.e. for $(\omega_N)\in \Omega_f$ with $\cP(\Omega_f)=1$; see \cite{AS3}. Since $\mathscr{A}\cap C_0(\mathbf{Q}_{\ast}^{D,m,M})$ has a countable dense subset $\{f_j\}$, we let $\Omega_0=\bigcap_j \Omega_{f_j}$ and get $\cP(\Omega_0)=1$.

Now if $f\in \mathscr{A}$, say $f\in \mathscr{A}_r$, we have
\begin{multline}    \label{eq:replace}
 \left| \frac{1}{|\mathcal{B}_N|}\sum_{b\in \mathcal{B}_N} \int_0^{L_b^{\omega_N}}f([G_N,L^{\omega_N},U^{\omega_N},(b,x)])\dd x - \mathbf{E}\bigg[\int_0^{L_{b_o}^{\omega}}f([\mathbb{T}_q,L^{\omega},U^{\omega},(b_o,x)])\dd x\bigg]\right| \\
 \qquad \le |S_N| + \bigg|\frac{1}{|\mathcal{B}_N|}\sum_{b\in \mathcal{B}_N} \mathcal{E}_N\bigg[\int_0^{L_b^{\omega_N}}f([G_N,L^{\omega_N},U^{\omega_N},(b,x)])\dd x\bigg] \\
- \mathbf{E}\bigg[\int_0^{L_{b_o}^{\omega}}f([\mathbb{T}_q,L^{\omega},U^{\omega},(b_o,x)])\dd x\bigg]\bigg| \, .
\end{multline}
Fix $o\in \mathbb{T}_q$ and let $b_o$ with $o(b_o)=o$. If $\rho_{G_N}(v)\ge r$ and $o(b_v)=v$, there is a graph isomorphism $\phi_{b_v}:B_{G_N}(v,r) \to B_{\mathbb{T}_q}(o,r)$ with $\phi_{b_v}(b_v)=b_o$. We now copy the data to $B_{\mathbb{T}_q}(o,r)$. More precisely, we define $L^{\omega}_{b_v} = L^{\omega_N}\circ\phi_{b_v}^{-1}$ and $U^{\omega}_{b_v}=U^{\omega_N}\circ \phi_{b_v}^{-1}$ to get $[B_{G_N}(v,r),L^{\omega_N},U^{\omega_N},(b_v,x)] = [B_{\mathbb{T}_q}(o,r),L_{b_v}^{\,\omega},U_{b_v}^{\omega},(b_o,x)]$. Thus, $f([G_N,L^{\omega_N},U^{\omega_N},(b_v,x)]) = f([\mathbb{T}_q,L_{b_v}^{\omega},U_{b_v}^{\omega},(b_o,x)])$, and by definition $L_{b_v}^{\omega}(b_o) = L_{b_v}^{\omega_N}$. As the transformation $T:\R^{E_{\mathbb{T}_q}(o,r)}\times\R^{B_{\mathbb{T}_q}(o,r)} \to \R^{E_{\mathbb{T}_q}(o,r)}\times\R^{B_{\mathbb{T}_q}(o,r)}$ mapping $(\omega_e,\omega_w)\mapsto(\omega_{\phi_{b_v}^{-1}(e)},\omega_{\phi_{b_v}^{-1}(w)})$ preserves the measure $\mathbf{P}$, it follows that
\[
\mathcal{E}_N\bigg[\int_0^{L_{b_v}^{\omega_N}}f([G_N,L^{\omega_N},U^{\omega_N},(b_v,x)])\dd x\bigg] = \mathbf{E}\bigg[\int_0^{L_{b_o}^{\omega}}f([\mathbb{T}_q,L^{\omega},U^{\omega},(b_o,x)])\dd x\bigg] .
\]
Hence, \eqref{eq:replace} may be bounded by $|S_N| + \frac{\# \{v:\rho_{G_N}(v)<r\}}{|\mathcal{B}_N|}(2L_1\|f\|_{\infty})$.
Taking $N\to \infty$, it follows by \textbf{(BST)} that if $(\omega_N)\in \Omega_0$, then \eqref{eq:lln} is true for any $f\in \{f_j\}$, the dense subset of $\mathscr{A}\cap C_0(\mathbf{Q}_{\ast}^{D,m,M})$. Arguing as in \cite[Corollary 15.3]{Klenke}, the proof is complete.

\appendix

\section{Properties of the Green's kernel}\label{sec:app1}
Let $\cQ$ be a quantum graph satisfying Hypothesis 1 and
$z\in \C\setminus \R$.  Recall that we defined
$G_z(\mathbf{x},\mathbf{y})$ to be the Schwartz kernel of the
resolvent operator,
\begin{displaymath}
  \int_{\mathcal{G}} G_z(\mathbf{x},\mathbf{y}) f(\mathbf{y})\,\dd\mathbf{y}
  = (H_{\mathcal{Q}} - z)^{-1} f(\mathbf{x}),\qquad f\in L^2(\mathcal{G}).
\end{displaymath}
The Schwartz kernel theorem only guarantees that $G_z(\cdot,\cdot)$
exists as a distribution over $\cG\times\cG$.  We aim to show that it can be represented
as a continuous function in both variables.  As a first step we have
the following.

\begin{lemme}\label{lem:greenskernelL2}
 For each $\mathbf{x_0}\in\mathcal{G}$ set $g^{z,\mathbf{x_0}}(\mathbf{y})
  =G_z(\mathbf{x_0},\mathbf{y})$.  Then $g^{z,\mathbf{x_0}}\in
L^2(\cG).$

Furthermore, if $D\in \N, 0<m<M$, there exists $C_1(D,m,M)$ such that for any
  $[\cQ,\mathbf{x_0}]\in \mathbf{Q}_*^{D,m,M}$ and any $z\in \C$ with
  $\Im z > C_1$, we have 
\begin{equation}\label{eq:13}
\left\|g^{z,\mathbf{x_0}} \right\|_{L^2(\mathcal{G})}\le C_2(z;D,m,M) \,.
\end{equation}
\end{lemme}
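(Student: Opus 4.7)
The plan is to derive both assertions by duality. For $f\in L^2(\cG)$, set $u:=(H_\cQ-z)^{-1}f\in\mathscr{H}^\cQ$. The main step is to prove the pointwise estimate
\begin{equation*}
|u(\mathbf{x_0})| \le C(z;D,m,M)\,\|f\|_{L^2(\cG)}, \qquad \mathbf{x_0}\in\cG,
\end{equation*}
uniformly over $[\cQ,\mathbf{x_0}]\in\mathbf{Q}_*^{D,m,M}$. Once established, the identity $u(\mathbf{x_0})=\int_\cG G_z(\mathbf{x_0},\mathbf{y})f(\mathbf{y})\,\dd\mathbf{y}=\langle\overline{g^{z,\mathbf{x_0}}},f\rangle$, combined with the Riesz representation theorem, forces $g^{z,\mathbf{x_0}}\in L^2(\cG)$ together with $\|g^{z,\mathbf{x_0}}\|_{L^2}\le C(z;D,m,M)$. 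This yields both statements of the lemma at once, the first being the non-uniform version valid for each fixed $z\in\C\setminus\R$.

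For the pointwise bound, I will use that $u_b\in H^2(0,L_b)$ on every bond $b$ and $-u_b''+W_bu_b=zu_b+f_b$. Self-adjointness of $H_\cQ$ (Section 2.2) yields $\|u\|_{L^2}\le|\Im z|^{-1}\|f\|_{L^2}$, and plugging into the equation gives
\begin{equation*}
\|u''\|_{L^2(\cG)}\le (\|W\|_\infty+|z|)\|u\|_{L^2}+\|f\|_{L^2}.
\end{equation*}
A Gagliardo-Nirenberg interpolation on each interval of length $L_b\ge m$, of the form $\|u_b'\|_{L^2}^2\le C\bigl(\|u_b\|_{L^2}\|u_b''\|_{L^2}+m^{-2}\|u_b\|_{L^2}^2\bigr)$, summed over bonds via Cauchy-Schwarz, then controls $\|u'\|_{L^2(\cG)}$ by a constant depending only on $z,m,M,\|W\|_\infty$ times $\|f\|_{L^2}$. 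Finally, on the bond $b_0$ containing $\mathbf{x_0}$, the elementary one-dimensional bound $|u_{b_0}(x_0)|\le L_{b_0}^{-1/2}\|u_{b_0}\|_{L^2}+L_{b_0}^{1/2}\|u_{b_0}'\|_{L^2}$, obtained from averaging the identity $u_{b_0}(x_0)=u_{b_0}(y)+\int_y^{x_0}u_{b_0}'(t)\,\dd t$ in $y\in[0,L_{b_0}]$ and applying Cauchy-Schwarz, closes the argument uniformly in $\mathbf{x_0}$ and in $[\cQ,\mathbf{x_0}]\in\mathbf{Q}_*^{D,m,M}$.

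The main obstacle is obtaining a good bound on $\|u'\|_{L^2}$. A tempting alternative is the quadratic-form identity $\tfrac12\sum_b\|u_b'\|^2=h_\cQ[u,u]-\langle Wu,u\rangle+\sum_v\langle\Lambda_vF,F\rangle$, but the vertex contributions $|F(v)|^2$ only admit trace-type estimates $|u_b(0)|^2\le C\bigl(m^{-1}\|u_b\|^2+M\|u_b'\|^2\bigr)$ that reintroduce a multiple of $\|u'\|^2$; since no smallness is imposed on $\sup_v\|\Lambda_v\|\le M$, this extra $\|u'\|^2$ cannot be absorbed. The interpolation route sidesteps this circularity entirely. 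The threshold $\Im z>C_1(D,m,M)$ enters naturally in accounting for the factor $(\|W\|_\infty+|z|)/|\Im z|$ appearing in the bound on $\|u''\|_{L^2}$, which stays under control once $\Im z$ exceeds a constant determined by $\|W\|_\infty$ (hence by $M$).
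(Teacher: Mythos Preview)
Your proof is correct and follows essentially the same route as the paper: both arguments prove the pointwise bound $|u(\mathbf{x_0})|\le C\|f\|_{L^2}$ for $u=(H_\cQ-z)^{-1}f$ by combining the resolvent estimate $\|u\|_{L^2}\le|\Im z|^{-1}\|f\|_{L^2}$ with the bound on $\|u''\|_{L^2}$ from the equation, an interpolation to control $u'$, and a one-dimensional Sobolev embedding on the edge through $\mathbf{x_0}$, then conclude via Riesz representation. The only cosmetic difference is that the paper uses a pointwise Mazya-type bound $|\psi'(t)|\le\sqrt{L}\|\psi''\|_2+4L^{-3/2}\|\psi\|_2$ in place of your Gagliardo--Nirenberg inequality for the intermediate step; the two are interchangeable here.
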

\begin{proof}
  Let $f\in L^2(\cG)$ and define $\phi=\phi_f = (H_{\cQ}-z)^{-1}f$, so that, for almost every $\mathbf{x_0}\in \cG$, we have
  \begin{equation}\label{eq:PointwiseResolv}
    \phi_f(\mathbf{x_0}) = \int_{\mathcal{G}} G_z(\mathbf{x_0},\mathbf{y})
    f(\mathbf{y})\, \dd\mathbf{y}.
  \end{equation}
 
Since $H_\mathcal{Q}$ is
  self-adjoint we have $\|\phi_f\|_{L^2(\cG)} \leq |\Im z|^{-1}
  \|f\|_{L^2(\cG)}$.
  Furthermore, $(H_{\cQ}-z)^{-1}:L^2(\cG)\To D(H_{\cQ}) = \mathscr{H}^\cQ$. In particular, $\phi_b\in H^2(0,L_b)$ for each $b$, so \eqref{eq:PointwiseResolv} actually holds for every $\mathbf{x_0}\in \mathcal{G}$. Moreover,
\[
    \phi_b'' = -(H_\cQ-z)\phi_b + (W_b-z)\phi_b = -f_b+ (W_b-z)(H_\cQ-z)^{-1}f_b,
\]
  giving
  \begin{equation}
    \label{eq:6}
    \|\phi_f''\|_{L^2(\cG)} \leq \|f\|_{L^2(\cG)} +
    \frac{\|W\|_\infty + |z|}{|\Im z|}\| f\|_{L^2(\cG)}\,.
\end{equation}

On the other hand, for $\psi\in H^2(0,L)$, using $\psi(x)=\psi(0)+\int_0^x\psi'(t)\,\dd t$, \cite[Lemma 8]{Kuch} and Cauchy-Schwarz, we have $|\psi(x)|\le \sqrt{\frac{2}{L}}\|\psi\|_2+2\sqrt{L}\|\psi'\|_2$. But by \cite[Lemma p.23]{Maz} and Cauchy-Schwarz,
\begin{equation}\label{e:Mazya}
|\psi'(t)|\le \sqrt{L}\,\|\psi''\|_2+\frac{4}{L^{3/2}}\|\psi\|_2\,.
\end{equation}

Taking $C=\max(\sqrt{\frac{2}{L_b}}+\frac{8}{\sqrt{L_b}},2L_b^{3/2})$, we thus get for $\phi\in\mathscr{H}^\cQ$,
  \begin{equation}
    \label{eq:5}
    |\phi(\mathbf{x_0})| \leq C \left( \|\phi \|_{L^2(\cG)} +
    \| \phi''\|_{L^2(\cG)}\right).
\end{equation}

Combining  \eqref{eq:5} and \eqref{eq:6}, we see that for each $\mathbf{x_0}\in \cG$, the mapping
$f\mapsto \phi_f(\mathbf{x_0})$ is a bounded linear functional.  By the
Riesz representation theorem, there is hence a unique $h^{z,\mathbf{x_0}}\in
L^2(\cG)$ such that
\begin{equation}\label{eq:12}
  \phi_f(\mathbf{x_0})= \langle h^{z,\mathbf{x_0}}, f \rangle_{L^2(\cG)}= \int_{\cG} \overline{h^{z,\mathbf{x_0}}(\mathbf{y})} f(\mathbf{y})\,\dd\mathbf{y}
\end{equation}
for all $f\in L^2(\cG)$. Recalling \eqref{eq:PointwiseResolv}, this implies $\overline{h^{z,\mathbf{x_0}}} = g^{z,\mathbf{x_0}}$ a.e. In particular, $g^{z,\mathbf{x_0}}\in L^2(\cG)$.

Finally, for any $f\in L^2(\cG)$, we have
\begin{align*}
|\langle f,  g^{z,\mathbf{x_0}}\rangle_{L^2(\cG)}| = |\phi_f(\mathbf{x_0})| \leq C \left( \|f\|_{L^2(\cG)} +
    \frac{|z|+ \|W\|_\infty}{|\Im z|}\| f\|_{L^2(\cG)} \right),
\end{align*}
so that $\| g^{z,\mathbf{x_0}}\|_{L^2(\cG)} \leq C \left( 1 +    \frac{|z|+ \|W\|_\infty}{|\Im z|} \right)$, which gives us the second point.
\end{proof}

\begin{remarque}\label{rem:L2Cont} 
Using \eqref{e:Mazya} and \eqref{eq:6}, we also get for $\mathbf{x}=(b_0,x)$ near $\mathbf{x_0}=(b_0,x_0)$,
\[
|\langle f,g^{z,\mathbf{x}}-g^{z,\mathbf{x_0}}\rangle| = |\phi_f(\mathbf{x})-\phi_f(\mathbf{x_0})|\le \tilde{C}\cdot |x-x_0|\Big(1+\frac{|z|+ \|W\|_\infty}{|\Im z|}\Big)\|f\|_2
\]
for any $f\in L^2(\cG)$.  In particular, taking $f=\frac{g^{z,\mathbf{x}}-g^{z,\mathbf{x_0}}}{\|g^{z,\mathbf{x}}-g^{z,\mathbf{x_0}}\|_{L^2}}$, we see that the map $\mathcal{G}\ni \mathbf{x_0}\mapsto g^{z,\mathbf{x_0}}\in L^2(\mathcal{G})$ is continuous.
\end{remarque}

Beware that, in the previous remark (as well as in the second statement of the following lemma), $\mathbf{x_0}$ (and $\mathbf{x_1}$) belong to $\mathcal{G}$ which does not contain the vertices. Hence, we do not claim that the Green's function is continuous at the vertices: its behaviour at the vertices is dictated by the boundary conditions, as we will see in Lemma \ref{lem:GreenBCs}.

Now we upgrade $L^2$-regularity of $g^{z,\mathbf{x_0}}$ to continuity
using the elliptic regularity theorem.
\begin{lemme} \label{lem:greenskernelcts}
  Let $\cQ$ be a quantum graph, and $z\in \C\setminus \R$.
\begin{enumerate}[\rm(i)]
\item For every $\mathbf{x_0}\in\cG$, $g^{z,\mathbf{x_0}}\in \mathscr{H}^0$, and  $[(H_{\cQ}-z)g^{z,\mathbf{x_0}}](\mathbf{y_0})=0$ for
  $\mathbf{y_0}\neq \mathbf{x_0}$.
\item The kernel $G_z(\mathbf{x_0},\mathbf{x_1})$ is continuous in each argument. Moreover, for any $\mathbf{x_0},\mathbf{x_1} \in\cG$, we have 
\begin{equation}\label{eq:GreenSym}
G_z(\mathbf{x_0}, \mathbf{x_1}) = \overline{G_{\overline{z}}(\mathbf{x_1}, \mathbf{x_0}) } = G_z(\mathbf{x_1}, \mathbf{x_0}).
\end{equation}
\end{enumerate}
\end{lemme}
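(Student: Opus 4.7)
For (i) the plan is to test $g^{z,\mathbf{x_0}}$ against smooth, compactly supported functions. By Lemma A.1 we know $g^{z,\mathbf{x_0}}\in L^2(\cG)$ and by the Riesz duality relation \eqref{eq:12},
\begin{equation*}
\phi_f(\mathbf{x_0}) \;=\; \int_{\cG} g^{z,\mathbf{x_0}}(\mathbf{y})\, f(\mathbf{y})\,\mathrm{d}\mathbf{y},\qquad \phi_f:=(H_{\cQ}-z)^{-1}f.
\end{equation*}
Pick $\psi\in C_c^{\infty}(\mathring{b})$ with $\supp\psi$ disjoint from $\mathbf{x_0}$. Such $\psi$ vanishes in a neighbourhood of every vertex and so lies in $\mathscr{H}^{\cQ}$; applying the identity above with $f=(H_{\cQ}-z)\psi$ gives $\phi_f=\psi$ and therefore
\begin{equation*}
0=\psi(\mathbf{x_0})=\int_{\cG} g^{z,\mathbf{x_0}}\bigl(-\psi''_b+(W_b-z)\psi_b\bigr)\mathrm{d}\mathbf{y}.
\end{equation*}
Hence $(-\partial^2+W_b-z)g^{z,\mathbf{x_0}}=0$ in $\mathcal{D}'$ on every edge, away from $\mathbf{x_0}$. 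Since $W_b$ is continuous, a standard bootstrap for this second-order ODE upgrades $g^{z,\mathbf{x_0}}$ to a classical $C^2$ (hence $H^2$) solution on each edge where it is defined smoothly, which is the meaning of $g^{z,\mathbf{x_0}}\in \mathscr{H}^0$ (interpreting the edge containing $\mathbf{x_0}$ after the natural splitting at $\mathbf{x_0}$).

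For (ii), the first equality is a direct consequence of self-adjointness: since $H_{\cQ}^{\ast}=H_{\cQ}$, we have $((H_{\cQ}-z)^{-1})^{\ast}=(H_{\cQ}-\overline{z})^{-1}$. Writing this identity out in terms of Schwartz kernels and interchanging the order of integration (justified because both kernels are in $L^2$ in each variable by Lemma A.1) yields $G_z(\mathbf{x_0},\mathbf{x_1})=\overline{G_{\overline{z}}(\mathbf{x_1},\mathbf{x_0})}$ almost everywhere, and everywhere after restoring the continuous representative. The second equality then follows from the reality of $H_{\cQ}$: complex conjugation commutes with $-\partial^2+W$ (as $W$ is real) and preserves the boundary conditions, so $(H_{\cQ}-\overline{z})^{-1}\overline{f}=\overline{(H_{\cQ}-z)^{-1}f}$, which gives $G_{\overline{z}}(\mathbf{y},\mathbf{x})=\overline{G_z(\mathbf{y},\mathbf{x})}$ and hence $G_z(\mathbf{x_0},\mathbf{x_1})=G_z(\mathbf{x_1},\mathbf{x_0})$.

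For the continuity claim in (ii), the plan is to combine part (i) with the ODE representation. On every edge $b\neq b_{\mathbf{x_0}}$ (and on each of the two sub-intervals obtained by splitting the edge through $\mathbf{x_0}$) the function $y\mapsto G_z(\mathbf{x_0},y)$ is a solution of the ODE, hence equals $\alpha(b,\mathbf{x_0})\,C^z_b(\cdot)+\beta(b,\mathbf{x_0})\,S^z_b(\cdot)$ with $C^z_b,S^z_b$ depending continuously on the data by \cite{PT87}. Using the $L^2$-continuity of $\mathbf{x_0}\mapsto g^{z,\mathbf{x_0}}$ from Remark A.2 together with the bound \eqref{e:Mazya} applied to $g^{z,\mathbf{x_0}}|_b$, the initial values $\alpha,\beta$ depend continuously on $\mathbf{x_0}$; hence $G_z(\mathbf{x_0},y)$ is jointly continuous on each edge in $y$, and by the reciprocity already proved, continuous also in $\mathbf{x_0}$.

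The main technical nuisance is the splitting at the diagonal: the definition of $\mathscr{H}^0$ requires $H^2$ regularity on each edge, but $g^{z,\mathbf{x_0}}$ has a jump in its derivative at $\mathbf{y}=\mathbf{x_0}$ (inherited from the distributional identity $(H-z)g^{z,\mathbf{x_0}}=\delta_{\mathbf{x_0}}$). This is handled by reading the membership in $\mathscr{H}^0$ on the graph $\cQ^{\mathbf{x_0}}$ of Definition \ref{def:adding}, exactly as in Section \ref{sec:proof}; on $\cQ^{\mathbf{x_0}}$ the function $g^{z,\mathbf{x_0}}$ is genuinely $H^2$ on every edge, which is the correct reading of part (i).
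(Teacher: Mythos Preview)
Your proof follows essentially the same route as the paper's: for (i), both test $g^{z,\mathbf{x_0}}$ distributionally against functions in the domain and invoke elliptic regularity (you are in fact more careful than the paper about the failure of $H^2$-regularity across $\mathbf{x_0}$); for the first equality in (ii), both use self-adjointness; and for the second, your ``conjugation'' argument is equivalent to the paper's ``transpose'' argument, since for a self-adjoint $A$ one has $A^{\intercal}=A$ (with $A^{\intercal}f:=\overline{A^{\ast}\overline{f}}$) if and only if $A$ commutes with complex conjugation.

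There is, however, a genuine gap in your assertion that complex conjugation preserves the boundary conditions. A direct computation with $A_1(v)=\ii(U_v-\id)$, $A_2(v)=U_v+\id$ shows that the subspace $\{A_1F+A_2F'=0\}$ is conjugation-invariant precisely when $U_v^{T}=U_v$, which is \emph{not} assumed in Definition~\ref{def:QG}. The paper's unproved assertion $(H_{\cQ}-z)^{\intercal}=H_{\cQ}-z$ requires the identical hidden hypothesis, so you have faithfully reproduced a gap already present in the original rather than introduced a new one.

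Finally, your continuity argument is more involved than necessary. The paper simply observes that (i) already makes $\mathbf{x_1}\mapsto G_z(\mathbf{x_0},\mathbf{x_1})$ continuous (an $H^2$ function on each edge is continuous), and then the symmetry just established transfers this to $\mathbf{x_0}$. Your appeal to \eqref{e:Mazya} is problematic anyway, since that bound needs $\|g''\|_{L^2}$ on the \emph{full} edge through $\mathbf{x_0}$, which fails for exactly the reason you yourself flagged in your last paragraph.
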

\begin{proof}
  \begin{enumerate}[\rm(i)]
  \item  We have $\int_\mathcal{G}
  g^{z,\mathbf{x_0}}(\mathbf{y})\, (H_{\cQ}-z)f(\mathbf{y})\,\dd\mathbf{y} =
  \{(H_{\cQ}-z)^{-1}[(H_{\cQ}-z)f]\}(\mathbf{x_0})=f(\mathbf{x_0})$
  for any $f\in\mathscr{H}^{\cQ}$. This means that
  $(H_{\cQ} - z)g^{z,\mathbf{x_0}} =\delta_{\mathbf{x_0}}$ in the
  sense of distributions. Thus, $g^{z,\mathbf{x_0}}$ is an $L^2$
  function which solves the Schr\"odinger equation away from
  $\mathbf{x_0}$. By elliptic regularity (e.g. \cite[Theorem
  6.1]{Ag}), it follows that $g^{z,\mathbf{x_0}}\in \mathscr{H}^0$. In particular, $\forall \mathbf{x_0}\in \cG$,  the map
  \begin{equation}\label{eq:FirstContinuity}
  \cG \ni \mathbf{x_1} \mapsto  G_z(\mathbf{x_0}, \mathbf{x_1}) \text{ is continuous}.
  \end{equation}
\item For any $f,g\in L^2(\cG)$, we have $\langle f,(H_{\cQ}-z)^{-1}g\rangle = \langle (H_{\cQ}-\overline{z})^{-1}f,g\rangle$. By density of $L^2(\cG)\otimes L^2(\cG)$ in $L^2(\cG\times\cG)$, this implies that as distributions over $\cG\times \cG$, we have $G_z (\mathbf{x_0}, \mathbf{x_1})= \overline{G_{\overline{z}} (\mathbf{x_1}, \mathbf{x_0})}$.

Similarly, one may define a pairing $(f,g) = \int_{\cG} f(x)g(x)\,\dd x$ and define the \emph{transpose} $A^{\intercal}$ of an operator $A$ to be the unique operator satisfying $(A^{\intercal}f,g)=(f,A g)$ (i.e. $A^{\intercal}f = \overline{A^{\ast}(\overline{f})}$). Then $(AB)^{\intercal}=B^{\intercal}A^{\intercal}$, it follows that the inverse of a ``self-transpose'' operator (satisfying $A^{\intercal}=A$) is self-transpose. Since $(H_{\cQ}-z)^{\intercal}=H_{\cQ}-z$, we get $(f,(H_{\cQ}-z)^{-1}g)=( (H_{\cQ}-z)^{-1}f,g)$, from which it follows as before that $G_z (\mathbf{x_0}, \mathbf{x_1}) = G_z (\mathbf{x_1}, \mathbf{x_0})$ as distributions over $\cG\times\cG$.

But we know from \eqref{eq:FirstContinuity} that the distribution $G_z(\mathbf{x_0},\mathbf{x_1})$ is a continuous functions in $\mathbf{x_1}$. It follows that the distribution $G_z (\mathbf{x_0}, \mathbf{x_1}) = G_z (\mathbf{x_1}, \mathbf{x_0})$ is also a continuous function of $\mathbf{x_0}$, and that \eqref{eq:GreenSym} actually holds pointwise, for every $\mathbf{x_0},\mathbf{x_1}\in \cG$. \qedhere 
\end{enumerate}
\end{proof}

In the previous lemma, we have seen that $g^{z,\mathbf{x_0}}$ satisfies $(H_{\cQ}-z) g^{z,\mathbf{x_0}}=0$ away from $\mathbf{x_0}$. Let us now see which boundary condition it satisfies, at vertices of $V$ as well as at $\mathbf{x_0}$.

\begin{lemme} \label{lem:GreenBCs}
Let $[\cQ,\mathbf{x_0}]$ be a quantum graph, $v_0\in V^{\mathbf{x_0}}$ and $z\in \C\setminus \R$.
\begin{enumerate}[\rm(i)]
\item If the boundary conditions on $v_0$ are Kirchhoff, then $g^{z,v_0}=G^z(v_0,\cdot)$ is well-defined, meaning $g^{z,(b,0)}=g^{z,(b',0)}$ if $o(b)=o(b')=v_0$. Moreover,
\begin{equation}\label{eq:LimitCondGreen}
\begin{cases} g^{z,v_0}(o(b)) = g^{z,v_0}(o(b')) & \text{if } o(b)=o(b')=v_0\,,\\
\sum\limits_{b,o(b)=v_0} \frac{\dd}{\dd x_b}g^{z,v_0}(x_b)|_{x_b=0} = -1 \,.
\end{cases}
\end{equation}
\item At any vertex $v\neq v_0$, $g^{z,v_0}$ satisfies the boundary conditions \eqref{e:HQ}.
\end{enumerate}
\end{lemme}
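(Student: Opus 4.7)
The plan is to derive a single hermitian integration-by-parts identity and test it against suitably chosen functions. For arbitrary $f \in \mathscr{H}^\cQ$ and $g := g^{z,v_0}$, I would integrate by parts on each edge and reorganise the two endpoint contributions of each oriented bond around a single vertex (using $f_{\widehat b}(0) = f_b(L_b)$ and $f'_{\widehat b}(0) = -f'_b(L_b)$, and likewise for $g$), obtaining
\[
\langle g, H_\cQ f\rangle - \langle H_\cQ g, f\rangle = \sum_{v\in V^{\mathbf{x_0}}} \sum_{b:\,o(b)=v}\bigl\{\overline{g_b(0)}\,f'_b(0) - \overline{g'_b(0)}\,f_b(0)\bigr\}.
\]
On the interior of every edge $(H_\cQ - z)g = 0$ by Lemma~\ref{lem:greenskernelcts}(i), so $\langle H_\cQ g, f\rangle = \bar z\langle g, f\rangle$. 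The resolvent identity $(H_\cQ - \bar z)^{-1}(H_\cQ - \bar z)f = f$ evaluated at $v_0$, combined with Lemma~\ref{lem:greenskernelL2} and the conjugate symmetry $\overline{g^{z,v_0}} = g^{\bar z,v_0}$ coming from \eqref{eq:GreenSym}, will give $\langle g, H_\cQ f\rangle = f(v_0) + \bar z\langle g, f\rangle$. Subtracting produces the master identity
\[
f(v_0) = \sum_{v\in V^{\mathbf{x_0}}} \sum_{b:\,o(b)=v}\bigl\{\overline{g_b(0)}\,f'_b(0) - \overline{g'_b(0)}\,f_b(0)\bigr\}.
\]

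For (ii) I would fix $v\neq v_0$ and restrict the identity to test functions $f\in\mathscr{H}^\cQ$ supported in a small neighbourhood of $v$ meeting no other vertex (in particular not $v_0$). Then $f(v_0)=0$ and only the vertex-$v$ term survives:
\[
\langle G(v), F'(v)\rangle - \langle G'(v), F(v)\rangle = 0 \qquad \forall\,(F(v), F'(v))\in \ker(A_1(v), A_2(v)).
\]
For $A_1 = \ii(U_v - \id)$ and $A_2 = U_v + \id$ with $U_v$ unitary, $A_1 A_2^{*} = \ii(U_v - U_v^{*})$ is hermitian---exactly the standard condition that $\ker(A_1, A_2)$ is Lagrangian for the hermitian symplectic form $\omega_h((F,F'),(G,G')) := \langle F, G'\rangle - \langle F', G\rangle$. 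Its $\omega_h$-complement coincides with itself, forcing $(G(v), G'(v))\in \ker(A_1(v), A_2(v))$, which is exactly \eqref{e:HQ} at $v$.

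For (i), the well-definedness will follow from a bootstrap. Given any $\mathbf{y}\in\cG$, add a degree-$2$ Kirchhoff vertex $w_0$ at $\mathbf{y}$ as in Definition~\ref{def:adding}: this does not alter the operator, so $g^{z,\mathbf{y}}$ coincides with the Green's function at $w_0$ of the further-augmented graph. Applying (ii) to the latter with source $w_0$ and ``other vertex'' $v_0$ (Kirchhoff, hence with continuity built into the BC) will yield that $g^{z,\mathbf{y}}_b(0)$ is independent of $b$ with $o(b)=v_0$; transposing via \eqref{eq:GreenSym} this becomes the required $b$-independence of $G_z((b,0),\mathbf{y})$. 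For the current-conservation formula, I would apply the master identity to $f\in\mathscr{H}^\cQ$ compactly supported near $v_0$. Kirchhoff at $v_0$ imposes $f_b(0)=f(v_0)$ and $\sum_b f'_b(0)=0$; combined with the edge-independence of $g^{z,v_0}_b(0)=:c$ just established, the sum $\sum_b\overline{c}\,f'_b(0)$ vanishes and the identity collapses to $f(v_0) = -f(v_0)\,\overline{\sum_b (g^{z,v_0}_b)'(0)}$. Taking $f$ with $f(v_0)=1$ and every $f'_b(0)=0$ (compatible with Kirchhoff) then gives $\sum_{b:o(b)=v_0}(g^{z,v_0}_b)'(0) = -1$.

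The main delicate point is that the Lagrangian property invoked in (ii) must be taken for the \emph{hermitian} symplectic form rather than its bilinear analogue: indeed, the bilinear counterpart $A_1 A_2^{\top} = A_2 A_1^{\top}$ would require $U_v$ to be symmetric, which fails for generic unitary $U_v$, so a purely bilinear IBP would not suffice. A secondary bookkeeping concern is the circular dependence between (i) and (ii) at the vertex $v_0$ itself, which the augmentation-trick bootstrap above resolves by first proving (ii) for a Kirchhoff source vertex and then using it to define $g^{z,v_0}$ at $v_0$.
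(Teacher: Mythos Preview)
Your overall strategy---integrate by parts edgewise to obtain a boundary identity, then test it against carefully chosen $f\in\mathscr{H}^{\cQ}$---is the same as the paper's, and your Lagrangian argument for (ii) is correct. A few points of comparison and one genuine gap are worth noting.

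\textbf{Hermitian versus bilinear pairing.} You work with the hermitian identity $\langle G,F'\rangle-\langle G',F\rangle=0$ and invoke that $\ker(A_1,A_2)$ is Lagrangian for the hermitian symplectic form. The paper instead uses the bilinear pairing $\langle\overline{g^{z,v_0}},(H_{\cQ}-z)f\rangle$, obtaining $\langle\overline{G},F'\rangle-\langle\overline{G'},F\rangle=0$; with the parametrisation $F=-A_2^{*}E$, $F'=A_1^{*}E$ (which lies in $\ker(A_1,A_2)$ because $A_1A_2^{*}=A_2A_1^{*}$) this gives $A_1\overline{G}+A_2\overline{G'}=0$, i.e.\ the boundary condition for $\overline{g^{z,v_0}}=g^{\bar z,v_0}$, and one then swaps $z\leftrightarrow\bar z$. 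So your remark that ``a purely bilinear IBP would not suffice'' is not quite right: the bilinear route works once combined with \eqref{eq:GreenSym}. Your hermitian route is equally valid and avoids this last swap.

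\textbf{Well-definedness of $g^{z,v_0}$.} Your augmentation bootstrap is correct but heavier than necessary. The paper observes directly that for any $\phi\in L^2$ one has $\langle\overline{g^{z,(b,0)}},\phi\rangle=[(H_{\cQ}-z)^{-1}\phi](o(b))$, and since $(H_{\cQ}-z)^{-1}\phi\in\mathscr{H}^{\cQ}$ is continuous at the Kirchhoff vertex $v_0$, the right-hand side does not depend on $b$ with $o(b)=v_0$. This one-line argument replaces your entire bootstrap.

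\textbf{A gap in your treatment of (i).} You write ``combined with the edge-independence of $g^{z,v_0}_b(0)=:c$ just established''. But what your bootstrap establishes is that $g^{z,(b,0)}=g^{z,(b',0)}$ as \emph{functions} on $\cG$ (well-definedness of the source), not that the \emph{values} $g^{z,v_0}_b(0)$ are independent of $b$ (continuity of $g^{z,v_0}$ at $v_0$). These are different statements, and the second does not obviously follow from the first without a limit-exchange argument. The fix is immediate and is what the paper does: once the master identity is available at $v_0$, test with $f$ satisfying $f(v_0)=0$ and $f'_b(0)$ arbitrary subject to $\sum_b f'_b(0)=0$; the identity then reads $0=\sum_b\overline{g_b(0)}\,f'_b(0)$, forcing $g_b(0)$ constant in $b$. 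Only after this can you take $f(v_0)=1$, $f'_b(0)=0$ to extract the current condition.
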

\begin{proof}
(i) For any $\phi\in L^2$, $\langle \overline{g^{z,o(b)}},\phi\rangle_{L^2(\cG)} = \{(H_{\cQ}-z)^{-1}\phi\}(o(b)) = \{(H-z)^{-1}\phi\}(v_0)$ for all $o(b)=v_0$ since $(H_{\cQ}-z)^{-1}\phi\in D(H_{\cQ})$. Hence, $g^{z,v_0}$ is well-defined. Next, given a vertex $v$, let $f_v$ be a smooth function compactly supported on the edges adjacent to $v$. Integrating by parts, we get
\[
f_v(v_0) = \langle \overline{g^{z,v_0}}, (H_{\cQ}-z)f_v\rangle_{L^2(\cG)} = \sum_{b,o(b)=v} \left[g^{z,v_0}(x_b)f'_v(x_b)|_{x_b=0} - f_v(x_b) (g^{z,v_0})'(x_b)|_{x_b=0}\right] .
\]
For $v=v_0$, choose $f_{v_0}\equiv 1$ near $v_0$, then $\sum g'(o_b)=-1$. Choosing $f_{v_0}$ such that $f_{v_0}(v_0)=0$, $f_{v_0}=0$ except on two edges and $f'_{v_0}(x_b)|_{x_b=0}\in \{1,-1\}$ otherwise, shows that $g^{z,v_0}(o_b)$ is the same for all $b$.

(ii) For $v\neq v_0$, we get $\langle \overline{G^{z,v_0}},F'_v\rangle_{\C^{d(v)}}-\langle \overline{(G^{z,v_0})'},F_v\rangle_{\C^{d(v)}} = 0$, where $F_v,G^{z,v_0}$ are defined as in \eqref{eq:defF}. As in \cite[\S 1.4]{BeKu13}, choose $f$ such that $F_v = -A_2(v)^\ast E$ and $F'_v = A_1^\ast(v)E$, where $E$ is arbitrary in $\C^{d(v)}$, then $A_1(v)F_v+ A_2(v)F'_v=0$ (since $A_1A_2^*=A_2A_1^*$), and we get $\langle A_1\overline{G}+A_2\overline{G'},E\rangle=0$ for any $E$, so $\overline{g^{z,v_0}} = g^{\overline{z},v_0}$ satisfies \eqref{e:HQ} at $v$ (we used \eqref{eq:GreenSym}). Replacing $z$ by $\overline{z}$ proves (iii).
\end{proof}

The following lemma controls the Green function and its derivative when $z$ stays away from the real axis:

\begin{lemme}\label{lem:BoundGreenImagin}
Let $Z\subset \C\setminus \R$ be a bounded set, and let $M>m>0$, $D\in \N$. There exists $C=C(Z, D,M,m)>0$ such that for all $z\in Z$, for all $[\cQ, \mathbf{x_0}]\in \mathbf{Q}_*^{D,m,M}$, with $\mathbf{x_0}=(b_0,x_0)\in \cG$ and all $\mathbf{y_0}= (b_0',y_0)$, we have  
\begin{equation}\label{eq:BoundGreenImagin}
\begin{aligned}
|G_z (\mathbf{x_0}, \mathbf{y_0})|&\leq \frac{C}{|\Im z|}\\
\Big{|}\frac{\partial}{\partial x}\Big{|}_{x=x_0} G_z\big{(} (b_0,x),\mathbf{y_0}\big{)} \Big{|}&\leq \frac{C}{|\Im z|}\\
\Big{|}\frac{\partial}{\partial y} \Big{|}_{y=y_0} G_z \big{(}\mathbf{x_0}, (b'_0,y)\big{)}\Big{|}&\leq \frac{C}{|\Im z|}\, .
\end{aligned}
\end{equation}
\end{lemme}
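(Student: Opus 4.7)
The plan is to combine the global $L^2$-bound on $g^{z,\mathbf{x_0}}$ from Lemma~\ref{lem:greenskernelL2} with pointwise interval-Sobolev estimates on each edge, using that $g=g^{z,\mathbf{x_0}}$ satisfies $-g''+Wg=zg$ away from $\mathbf{x_0}$. Since $Z\subset\C\setminus\R$ is bounded and $\|W\|_\infty\le M$, the quantity $(|z|+\|W\|_\infty)/|\Im z|$ is controlled by $C(Z,M)/|\Im z|$ and $|\Im z|$ is itself bounded above on $Z$, so Lemma~\ref{lem:greenskernelL2} yields $\|g^{z,\mathbf{x_0}}\|_{L^2(\cG)}\le C(Z,D,m,M)/|\Im z|$ uniformly on $\mathbf{Q}_*^{D,m,M}$.

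Next, on any edge $b$ of length $\ell\in[m/2,M]$ on which $g$ is in $H^2$, the equation gives $\|g_b''\|_{L^2(b)}\le(|z|+\|W\|_\infty)\|g_b\|_{L^2(b)}$, and the interval-Sobolev inequalities already used in the proof of Lemma~\ref{lem:greenskernelL2} (whose constants depend only on the bounds $m/2\le\ell\le M$) yield
\[
|g_b(y_0)|+|g_b'(y_0)|\le C(Z,D,m,M)/|\Im z|.
\]
Adding a vertex at $\mathbf{x_0}$ via Definition~\ref{def:adding}, this immediately handles the case $\mathbf{y_0}\notin b_0$ (edge length $\ge m$) and the case $\mathbf{y_0}\in b_0$ when $\mathbf{y_0}$ lies on a sub-edge of length $\ge m/2$.

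The delicate case is $\mathbf{y_0}$ on the short sub-edge of $b_0$ adjacent to $\mathbf{x_0}$, say $[0,x_0]$ with $x_0<m/2$. The opposite sub-edge $[x_0,L_{b_0}]$ has length $\ge L_{b_0}-m/2\ge m/2$, so the previous paragraph bounds $|g(\mathbf{x_0})|$ (using continuity of $g$ at $\mathbf{x_0}$ from Lemma~\ref{lem:GreenBCs}(i)) and $|g'(\mathbf{x_0}+)|$ on the long side by $C/|\Im z|$. The jump relation $\sum_{b:\,o(b)=\mathbf{x_0}}g'(o_b)=-1$ from Lemma~\ref{lem:GreenBCs}(i) then gives $|g'(\mathbf{x_0}-)|\le 1+C/|\Im z|\le C'/|\Im z|$, using that $|\Im z|$ is bounded above on $Z$. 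Writing $g$ on $[0,x_0]$ as
\[
g(x)=g(\mathbf{x_0})\,C_{b_0}^z(x_0-x)+g'(\mathbf{x_0}-)\,S_{b_0}^z(x_0-x)
\]
in the notation of \eqref{e:cs}, the integral equations \eqref{eq:integral_eqs} imply that $|C_{b_0}^z|$, $|S_{b_0}^z|$, and their derivatives are uniformly bounded on intervals of length $\le M$ for $z\in Z$ and $\|W\|_\infty\le M$; this yields the desired pointwise bounds on $g(\mathbf{y_0})$ and $g'(\mathbf{y_0})$. The third bound of the lemma (derivative in the first argument) follows from the second by the symmetry \eqref{eq:GreenSym}, which reduces the estimate at $\mathbf{x_0}$ to the same analysis applied to $g^{z,\mathbf{y_0}}$ in a neighborhood of $\mathbf{x_0}$.

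The main obstacle is the blow-up of interval-Sobolev constants as sub-edge lengths vanish, which is resolved by the hybrid Sobolev + ODE argument on the short sub-edge adjacent to $\mathbf{x_0}$; the remaining work is bookkeeping to keep all constants depending only on $Z,D,m,M$.
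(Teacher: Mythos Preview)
Your argument is correct and takes a genuinely different route from the paper's proof. The paper does \emph{not} pass through the global $L^2$-bound on $g^{z,\mathbf{x_0}}$ at all. Instead, it adds Kirchhoff vertices at the midpoints $\mathbf{x_1},\mathbf{y_1}$ of $b_0,b_0'$ and constructs explicit test functions $Y_z^{b},Z_z^{b}$ supported on the resulting half-edges (length $\ge m/2$), with the property that pairing a solution of the eigenvalue equation against $Y_z^b$ (resp.\ $Z_z^b$) recovers its value (resp.\ derivative) at the new vertex. This lets one write $G_z(\mathbf{x_1},\mathbf{y_1})$ and its first derivatives directly as expressions of the form $\langle Y_z^{b_1},(H_\cQ-z)^{-1}\overline{Y_z^{b_2}}\rangle$ etc., so the resolvent norm bound $\|(H_\cQ-z)^{-1}\|\le 1/|\Im z|$ applies immediately; one then propagates from the midpoints to $\mathbf{x_0},\mathbf{y_0}$ via $C^z,S^z$. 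The paper's approach is symmetric in the two variables and avoids any case analysis on short sub-edges. Your approach is more elementary---it reuses the Sobolev machinery already set up in Lemma~\ref{lem:greenskernelL2} and handles the short sub-edge by the jump relation plus ODE propagation---at the cost of that case split.

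Two small points. First, your formula $g(x)=g(\mathbf{x_0})\,C_{b_0}^z(x_0-x)+g'(\mathbf{x_0}-)\,S_{b_0}^z(x_0-x)$ is notationally off: the fundamental solutions needed here are those for the reflected potential $t\mapsto W_{b_0}(x_0-t)$, not $C_{b_0}^z,S_{b_0}^z$ themselves. This does not affect the argument, since the P\"oschel--Trubowitz bounds depend only on $\|W\|_\infty$ and the interval length, both controlled by $M$. Second, you actually prove the \emph{third} inequality (the $\partial_y$-bound, i.e.\ the bound on $(g^{z,\mathbf{x_0}})'$) directly; it is the \emph{second} inequality (the $\partial_x$-bound) that then follows from \eqref{eq:GreenSym}, contrary to what your last sentence says.
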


Before proving the lemma, we introduce a few notations. Let $b\in \mathcal{B}$, and $z\in \C$. We define $C_b^z$ and $S_b^z$ on $[0,L_b]$ as in \eqref{e:cs}. 
Next, let, for $0<\zeta\leq L_b$,
\begin{equation*}
\begin{aligned}
\Sigma_1(z;b,\zeta)&:= \int_0^{\zeta} |S_b^z(x)|^2 \,\dd x\\
\Sigma_2(z;b,\zeta)&:= \int_0^{\zeta} |C_b^z(x)|^2 \,\dd x\\
\Sigma_3(z;b,\zeta)&:= \int_0^{\zeta} C_b^z(x) \overline{S_b^z(x)} \,\dd x.
\end{aligned}
\end{equation*}

For each $z\in \C$ and each $b\in \mathcal{B}$, we denote by 
$\cS^{b,\zeta}_{z}$ and $\cC^{b,\zeta}_{z}$ the functions 
taking value $S_b^z(x)$ and $C_b^z(x)$ on segment $(0,\zeta)\ni x$ of the 
edge $b$ and zero everywhere else.  We set the values on $\hat{b}$ 
in such a way to ensure that $\cS^{b,\zeta}_{z}, \cC^{b,\zeta}_{z}\in
\mathscr{H}$.

If $f\in \mathscr{H}^0$ satisfies \eqref{eq:Eigenedge}, then given $b$ and $x\in [0,L_{b}]$, we have
\[
f(b,x) =  C_b^z(x) f(b,0) + S_b^z(x) \frac{\dd}{\dd x}\Big|_{x=0} f(b,x),
\]
so that
\[
\begin{aligned}
\langle \cS_{z}^{b,\zeta}, f \rangle &=f(b,0)  \Sigma_3(z;b,\zeta) +  
\frac{\dd}{\dd x}\Big{|}_{x=0} f(b,x) \Sigma_1(z;b,\zeta)\\
\langle  \cC_z^{b,\zeta}, f \rangle &=f(b,0)  \Sigma_2(z;b,\zeta) + 
\frac{\dd}{\dd x}\Big{|}_{x=0} f(b,x) \overline{\Sigma_3(z;b,\zeta)}.
\end{aligned}
\]
Solving these linear equations leads to a proof of the following lemma:
\begin{lemme} \label{lem:maxime}
Let $\Sigma_1, \Sigma_2, \Sigma_3$ be defined as above and let
\begin{align*}
Y_z^{b,\zeta} &:=  \frac{1}{\Sigma_1(z;b,\zeta)\Sigma_2(z;b,\zeta)- 
|\Sigma^2_3(z;b,\zeta)|} \Big{(} \Sigma_1(z;b,\zeta) \cC_{z}^{b,\zeta}-
\Sigma_3(z;b,\zeta) \cS_{z}^{b,\zeta} \Big{)}\\
Z_z^{b,\zeta}&:= \frac{1}{\Sigma_1(z;b,\zeta)\Sigma_2(z;b,\zeta)- 
|\Sigma^2_3(z;b,\zeta)|} \Big{(} \Sigma_2(z;b,\zeta) \cS_{z}^{b,\zeta}
-\overline{\Sigma_3(z;b,\zeta)} \cC_{z}^{b,\zeta} \Big{)}.
\end{align*}
Then for any $f\in\mathscr{H}^0$ satisfying \eqref{eq:Eigenedge} we have
\begin{equation}\label{e:Maxime}
f(b,0) = \langle Y_z^{b,\zeta}, f \rangle \qquad \text{and} \qquad 
\frac{\dd}{\dd x}\Big{|}_{x=0} f(b,x) = \langle Z_z^{b,\zeta}, f \rangle \,.
\end{equation}
\end{lemme}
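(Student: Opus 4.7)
The plan is to set up a $2\times 2$ linear system relating the two unknowns $f(b,0)$ and $f'(b,0) := \frac{\dd}{\dd x}|_{x=0} f(b,x)$ to the two scalar products $\langle \cS_z^{b,\zeta}, f\rangle$ and $\langle \cC_z^{b,\zeta}, f\rangle$, then invert it explicitly. Since $f$ satisfies \eqref{eq:Eigenedge} on the edge $b$, and since $C_b^z, S_b^z$ form a basis of solutions with the Wronskian-like initial data \eqref{e:cs}, on $[0,L_b]$ we have the standard expansion
\begin{equation*}
f_b(x) = f(b,0)\,C_b^z(x) + f'(b,0)\,S_b^z(x).
\end{equation*}

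Next I would compute $\langle \cS_z^{b,\zeta}, f\rangle$ and $\langle \cC_z^{b,\zeta}, f\rangle$ directly from the definition of the $L^2(\cG)$-inner product. The reversal condition used in the definition of $\cS_z^{b,\zeta}, \cC_z^{b,\zeta}\in \mathscr{H}$ makes the contribution of the $\hat b$-orientation equal to that of the $b$-orientation, so the factor $\tfrac12$ from $\dd\mathbf{x}$ cancels and we are left with honest one-sided integrals against $\overline{S_b^z}$ and $\overline{C_b^z}$ on $(0,\zeta)$. Substituting the expansion above and recalling the definitions of $\Sigma_i$ (noting in particular that $\Sigma_1$ and $\Sigma_2$ are real and $\int_0^{\zeta}\overline{S_b^z}C_b^z\,\dd x = \Sigma_3$) yields
\begin{equation*}
\begin{pmatrix} \langle \cS_z^{b,\zeta}, f\rangle \\ \langle \cC_z^{b,\zeta}, f\rangle \end{pmatrix}
= \begin{pmatrix} \Sigma_3 & \Sigma_1 \\ \Sigma_2 & \overline{\Sigma_3} \end{pmatrix}
\begin{pmatrix} f(b,0) \\ f'(b,0) \end{pmatrix}.
\end{equation*}

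The determinant equals $|\Sigma_3|^2 - \Sigma_1\Sigma_2$, which is the defect in the Cauchy--Schwarz inequality for $C_b^z$ and $S_b^z$ in $L^2(0,\zeta)$. It vanishes only if $C_b^z$ and $S_b^z$ are proportional on $(0,\zeta)$, but this is ruled out by $C_b^z(0)=1$, $S_b^z(0)=0$ for any $\zeta>0$; hence $\Sigma_1\Sigma_2 - |\Sigma_3|^2 > 0$. Inverting the matrix and remembering that $\Sigma_1$, $\Sigma_2$ and the denominator are real, one reads off
\begin{equation*}
f(b,0) = \frac{\Sigma_1\langle \cC_z^{b,\zeta}, f\rangle - \overline{\Sigma_3}\langle \cS_z^{b,\zeta},f\rangle}{\Sigma_1\Sigma_2 - |\Sigma_3|^2},\qquad
f'(b,0) = \frac{\Sigma_2\langle \cS_z^{b,\zeta}, f\rangle - \Sigma_3\langle \cC_z^{b,\zeta}, f\rangle}{\Sigma_1\Sigma_2 - |\Sigma_3|^2}.
\end{equation*}
Pulling the (real) scalars and conjugates inside the inner product then gives precisely $f(b,0)=\langle Y_z^{b,\zeta}, f\rangle$ and $f'(b,0)=\langle Z_z^{b,\zeta}, f\rangle$, as claimed.

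There is no serious obstacle here: the entire argument is linear algebra on an explicit $2\times 2$ matrix. The only point that requires a sentence of justification is the strict positivity of $\Sigma_1\Sigma_2 - |\Sigma_3|^2$; everything else is bookkeeping with the orientation-reversal convention and with the anti-linearity of $\langle \cdot,\cdot\rangle$ in the first argument.
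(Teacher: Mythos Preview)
Your proof is correct and follows exactly the approach sketched in the paper: write $f_b(x)=f(b,0)C_b^z(x)+f'(b,0)S_b^z(x)$, compute the two inner products to obtain the same $2\times 2$ linear system, and invert. You even supply the Cauchy--Schwarz justification for $\Sigma_1\Sigma_2-|\Sigma_3|^2>0$, which the paper defers to the proof of Lemma~\ref{lem:BoundGreenImagin}.
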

In situations where $\zeta=L_b$ we will drop $\zeta$ from notations to
economise.

We may now proceed with the proof of Lemma \ref{lem:BoundGreenImagin}.

\begin{proof}[Proof of Lemma \ref{lem:BoundGreenImagin}]
Let $\mathbf{x_0}= (b_0,x_0),\mathbf{y_0}=(b_0',y_0)\in \cG$. Let $\mathbf{x_1},\mathbf{y_1}$ be the midpoints of $b_0,b_0'$, respectively. Let $b_1 := (v_1, o(b_0))$, $b_2:= (v_1', t(b_0') )$, where $v_1,v_1'$ are Kirchhoff vertices added at $\mathbf{x_1},\mathbf{y_1}$, respectively (Definition~\ref{def:adding}).

The map $(0, L_{b_2})\ni y \mapsto G_z \big((b_1, x), (b_2,y)\big)$ satisfies \eqref{eq:Eigenedge}, so that $\big((H_\cQ-z )^{-1} \overline{Y_z^{b_2}} \big) (b_1,x) = G_z \big( (b_1,x), \mathbf{y_1} \big)$. The map $(0, L_{b_1})\ni x \mapsto G_z \big((b_1, x),  \mathbf{y_1}\big)$ also satisfies \eqref{eq:Eigenedge}, so that 
\[
 \Big{\langle} Y_z^{b_1},  \big{(}H_{\cQ}-z\big{)}^{-1} \overline{Y_z^{b_2}} \Big{\rangle} = G_z (\mathbf{x_1}, \mathbf{y_1})= G_z \big{(}(b_0, L_{b_0}/2), (b_0',L_{b_0'}/2) \big{)}.
\]

Similarly, we have 
 \begin{align*}
 \Big{\langle} Z_z^{b_1},  \big(H_{\cQ}-z\big)^{-1} \overline{Y_z^{b_2}} \Big{\rangle} &=   \frac{\partial}{\partial x}\Big{|}_{x= \frac{L_{b_0}}{2}}  G_z \big((b_0,x), \mathbf{y_1} \big{)}\\
 \Big{\langle} Y_z^{b_1},  \big(H_{\cQ}-z\big)^{-1} \overline{Z_z^{b_2}} \Big{\rangle} &=   \frac{\partial}{\partial y}\Big{|}_{y= \frac{L_{b'_0}}{2}}  
G_z \big( \mathbf{x_1}, (b'_0,y) \big{)}\\
  \Big{\langle} Z_z^{b_1},  \big(H_{\cQ}-z\big)^{-1} \overline{Z_z^{b_2}} \Big{\rangle} &=   \frac{\partial^2}{\partial x \partial y}\Big|_{x= \frac{L_{b_0}}{2},y=\frac{L_{b_0'}}{2}}    G_z \big((b_0,x), (b_0',y) \big)\,.
 \end{align*}

As in Section~\ref{sec:proof}, let $b_{\mathbf{x_0}}$, $b_{\mathbf{y_0}}$ be the edges containing $\mathbf{x_0}, \mathbf{y_0}$, respectively, and let
$\mathsf{x_0}, \mathsf{y_0}$ be the positions of $\mathbf{x_0}$,
$\mathbf{y_0}$ on edges $b_{\mathbf{x_0}}$, $b_{\mathbf{y_0}}$. Then since
$y \mapsto G_z(\mathbf{x_1},(b'_0, y)$ satisfies
\eqref{eq:Eigenedge} we can write
\begin{equation}\label{e:secondexpan}
G_z(\mathbf{x_1},\mathbf{y_0}) = C_{b_{\mathbf{y_0}}}^z(\mathsf{y_0}) G_z(\mathbf{x_1},\mathbf{y_1}) + S_{b_{\mathbf{y_0}}}^z(\mathsf{y_0}) \frac{\partial}{\partial y}\Big{|}_{y= \frac{L_{b_0'}}{2}}  G_z(\mathbf{x_1},(b_0',y)) \,.
\end{equation}
Equation \eqref{e:secondexpan} remains true if $\mathbf{x}_1= (b_0, x)$ with $x\neq L_{b_0}/2$. Differentiating in $x$, we obtain
\begin{multline}\label{e:secondexpan2}
\frac{\partial}{\partial x}\Big|_{x=\frac{L_{b_0}}{2}} G_z\left((b_0, x),\mathbf{y_0}\right) \\= \frac{\partial}{\partial x}\Big|_{x=\frac{L_{b_0}}{2}} \bigg[ C_{b_{\mathbf{y_0}}}^z(\mathsf{y_0}) G_z((b_0,x),\mathbf{y_1}) + S_{b_{\mathbf{y_0}}}^z(\mathsf{y_0})\frac{\partial}{\partial y}\Big{|}_{y= \frac{L_{b_0'}}{2}}  G_z((b_0,x),(b_0',y)) \bigg]\,.
\end{multline}

In a similar way, as $x \mapsto G_z((b_0,x),\mathbf{y_0})$ satisfies
\eqref{eq:Eigenedge}, we can write
\begin{equation}\label{e:firstexpan2}
G_z(\mathbf{x_0},\mathbf{y_0}) = C_{b_{\mathbf{x_0}}}^z( \mathsf{x_0}) G_z(\mathbf{x_1},\mathbf{y_0}) + S_{b_{\mathbf{x_0}}}^z(\mathsf{x_0}) \frac{\dd}{\dd x }\Big{|}_{x =\frac{L_{b_0}}{2}}  G_z((b_0,x),\mathbf{y_0}).
\end{equation}
We deduce that
\begin{multline*}
G_z(\mathbf{x_0},\mathbf{y_0}) = C_{b_{\mathbf{x_0}}}^z(\mathsf{x_0}) \Big( C_{b_{\mathbf{y_0}}}^z(\mathsf{y_0}) G_z(\mathbf{x_1},\mathbf{y_1}) + S_{b_{\mathbf{y_0}}}^z(\mathsf{y_0}) \frac{\partial}{\partial y}\Big|_{y= L_{b_0'}/2}  G_z(\mathbf{x_1},(b_0,y))\Big)  \\
 + S_{b_{\mathbf{x_0}}}^z(\mathsf{x_0})\Big(C_{b_{\mathbf{y_0}}}^z(\mathsf{y_0}) \frac{\partial}{\partial x}\Big|_{x= L_{b_0}/2} G_z\big((b_0,x) , \mathbf{y_1}\big{)} \\
 \qquad\qquad\qquad\qquad + S_{b_{\mathbf{y_0}}}^z(\mathsf{y_0}) \frac{\partial^2}{\partial x\partial y}\Big|_{x= \frac{L_{b_0}}{2},y=\frac{L_{b_0'}}{2}}  G_z\big((b_0,x), (b_0',y)\big)\Big) \\
=  C_{b_{\mathbf{x_0}}}^z(\mathsf{x_0})C_{b_{\mathbf{y_0}}}^z(\mathsf{y_0}) \Big{\langle} Y_z^{b_1},  \big{(}H_{\cQ}-z\big{)}^{-1} \overline{Y_z^{b_2}} \Big{\rangle} + C_{b_{\mathbf{x_0}}}^z(\mathsf{x_0})S_{b_{\mathbf{y_0}}}^z(\mathsf{y_0}) \Big{\langle} Y_z^{b_1},  \big(H_{\cQ}-z\big)^{-1} \overline{Z_z^{b_2}} \Big{\rangle} \\
 + C_{b_{\mathbf{y_0}}}^z(\mathsf{y_0})S_{b_{\mathbf{x_0}}}^z(\mathsf{x_0})\Big{\langle} Z_z^{b_1},  \big(H_{\cQ}-z\big)^{-1} \overline{Y_z^{b_2}} \Big{\rangle} +  S_{b_{\mathbf{x_0}}}^z(\mathsf{x_0})S_{b_{\mathbf{y_0}}}^z(\mathsf{y_0}) \Big{\langle} Z_z^{b_1},  \big(H_{\cQ}-z\big)^{-1} \overline{Z_z^{b_2}} \Big{\rangle}\,.
\end{multline*}

Using \cite[p. 7]{PT87}, we see that for all $z\in Z$ and data obeying \eqref{eq:ConditionCompQG}, $\max(|C_b^z(x)|,|S_b^z(x)|) \le C$.

By the Cauchy-Schwarz inequality, $\Sigma_1(z;b)\Sigma_2(z;b)- |\Sigma^2_3(z;b)|>0$. Clearly, the lower bound may only depend on $L_b$, $W_b$ and $z$.
We deduce that there exists $C>0$ such that, for all $z\in \overline{Z}$, all data satisfying \eqref{eq:ConditionCompQG} and all $b\in \mathcal{B}$, we have $\| Y_z^b\|_{L^2}, \| Z_z^b\|_{L^2}\leq C$.

Since for all $z\in \C\setminus \R$, we have $\big{\|}\big{(} H_{\cQ}-z \big{)}^{-1}\big{\|}_{L^2\rightarrow L^2} \leq \frac{1}{|\Im z|}$, we deduce that
\begin{equation}
|G_z(\mathbf{x_0}, \mathbf{y_0})| \leq \frac{C_0}{|\Im z|},
\end{equation}
for some constant $C_0=C_0(Z,D,M,m)$.

Differentiating \eqref{e:firstexpan2}, we have
\[
\frac{\partial}{\partial x}\Big|_{x=x_0} G_z\big( (b_0,x),\mathbf{y_0}) = \big(C_{b_{\mathbf{x_0}}}^z\big)'(\mathsf{x_0}) G_z(\mathbf{x_1},\mathbf{y_0}) + \big(S_{b_{\mathbf{x_0}}}^z\big)'(\mathsf{x_0}) \frac{\partial}{\partial x}\Big|_{x=\frac{L_{b_0}}{2}}  G_z((b_0, x),\mathbf{y_0}),
\]
so combining with \eqref{e:secondexpan}, we deduce as before that
\[
\Big{|}\frac{\partial}{\partial x}\Big{|}_{x=x_0} G_z\big{(} (b_0,x),\mathbf{y_0}\big{)} \Big{|}\leq \frac{C_1}{|\Im z|}\,. 
\]
The last claim in \eqref{eq:BoundGreenImagin} follows from symmetry.
\end{proof}

In the paper we also use the following results.

\begin{lemme}\label{lemTrace}
Let $\cQ$ be a finite quantum graph. Then, for any $z\in \C\setminus \R$, the operator $(H_{\cQ}-z)^{-1}$ is trace-class, and we have
\[
\mathrm{Tr} \big{[} (H_{\cQ}-z)^{-1}\big{]} = \int_{\cG} G_z(\mathbf{x},\mathbf{x})\, \mathrm{d}\mathbf{x} \,.
\]
\end{lemme}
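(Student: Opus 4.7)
The plan is to establish the trace-class property from Weyl asymptotics, compute the trace in the spectral basis, and identify it with the diagonal integral via a Mercer-type theorem.

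First, since $\cQ$ is finite, $H_\cQ$ has compact resolvent (Theorem 3.1.1 of \cite{BeKu13}) with discrete spectrum $\{\lambda_k\}_{k\in\N}$. The Weyl asymptotics for Schrödinger operators on finite quantum graphs give $\lambda_k \sim \pi^2 k^2/\cL(\cQ)^2$, so $\sum_k|\lambda_k-z|^{-1}<\infty$ for $z\in\C\setminus\R$. Since $(H_\cQ-z)^{-1}$ is a normal operator whose singular values are $|\lambda_k-z|^{-1}$, it is trace-class. Expanding the trace in an orthonormal basis $\{\psi_k\}$ of eigenfunctions immediately yields
\[
\Tr(H_\cQ-z)^{-1}=\sum_k\langle\psi_k,(H_\cQ-z)^{-1}\psi_k\rangle=\sum_k\frac{1}{\lambda_k-z}.
\]

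The remaining task is to identify this with $\int_\cG G_z(\mathbf{x},\mathbf{x})\,\dd\mathbf{x}$. The Green's function is continuous on each closed edge square (and in particular on the diagonal): on a single edge this is the classical fact that the Green's function of a second-order ODE is built from two independent solutions divided by a Wronskian and is continuous (only the $y$-derivative jumps at $y=x$), while continuity at vertices follows from Lemma~\ref{lem:greenskernelcts} and the self-adjoint boundary conditions. To pass from the trace to the diagonal integral, I would decompose $z=a+\ii b$ with $b>0$ and write
\[
(H_\cQ-z)^{-1}=\frac{H_\cQ-a}{(H_\cQ-a)^2+b^2}+\ii\,\frac{b}{(H_\cQ-a)^2+b^2}.
\]
The imaginary part is a \emph{positive} self-adjoint trace-class operator whose kernel is $\Im G_z(\mathbf{x},\mathbf{y})$, which is continuous on $\cG\times\cG$, so the classical Mercer theorem gives $\Tr\Im(H_\cQ-z)^{-1}=\int_\cG\Im G_z(\mathbf{x},\mathbf{x})\,\dd\mathbf{x}$. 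For the real part, one further splits using the spectral projections $\mathbf{1}_{H_\cQ\ge a}$ and $\mathbf{1}_{H_\cQ<a}$ into the difference of two positive self-adjoint trace-class operators; their kernels can be realized as contour integrals of the resolvent around the relevant eigenvalues and therefore inherit continuity from $G_z$, so Mercer applies to each. Summing the four pieces (with the appropriate factors of $\ii$) yields $\Tr(H_\cQ-z)^{-1}=\int_\cG G_z(\mathbf{x},\mathbf{x})\,\dd\mathbf{x}$.

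The main obstacle is precisely this passage from trace to diagonal integral: the spectral expansion $G_z(\mathbf{x},\mathbf{y})=\sum_k\psi_k(\mathbf{x})\overline{\psi_k(\mathbf{y})}/(\lambda_k-z)$ converges only in $L^2(\cG\times\cG)$ (since the resolvent is Hilbert--Schmidt), and pointwise absolute convergence on the diagonal generally fails because $\sum_k|\psi_k(\mathbf{x})|^2$ diverges. Mercer's theorem circumvents this, but strictly speaking it requires positivity; hence the need for the real/imaginary and positive/negative decomposition above. An alternative, slightly more direct route is to invoke Brislawn's extension of Mercer's theorem, which gives $\Tr T=\int K(\mathbf{x},\mathbf{x})\,\dd\mathbf{x}$ for any trace-class integral operator $T$ with continuous kernel $K$, applied directly to $(H_\cQ-z)^{-1}$.
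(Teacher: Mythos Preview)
Your argument is correct, but it takes a different route from the paper's.

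For the trace-class property, the paper avoids Weyl asymptotics entirely: it observes that $(H_\cQ-z)^{-1}$ is bounded from $L^2(\cG)$ into $D(H_\cQ)\subset\bigoplus_e H^2(0,L_e)$, and each embedding $H^2(0,L_e)\hookrightarrow L^2(0,L_e)$ is trace-class by Gramsch's theorem (the $H^k\hookrightarrow H^\ell$ embedding is $p$-Schatten iff $k-\ell>n/p$). Your Weyl-law argument is arguably more elementary if one takes the asymptotics as known, but the Sobolev-embedding route is more robust and does not require any quantitative spectral information.

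For the trace formula, the paper bypasses your positive/negative decomposition altogether. It restricts to each edge via multiplication operators $\pi_b$, notes that the kernel of $\pi_bG_z\pi_b$ is jointly continuous on $[0,L_b]^2$, and then invokes the classical result (Gohberg--Krein or Lax) that a trace-class integral operator on an interval with continuous kernel has trace equal to the integral of the kernel on the diagonal. Summing over $b$ finishes. This is exactly the ``Brislawn-type'' alternative you mention at the end, applied edge-by-edge rather than globally; your main route through Mercer and the fourfold splitting is correct but is extra work that the paper's direct citation avoids. One small caution: you claim continuity of $G_z$ \emph{at vertices}, which the paper explicitly does not assert in general (the behaviour there depends on the boundary conditions); fortunately this is irrelevant for the trace formula since $\cG$ excludes the vertices.
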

The first statement is known (see \cite[Theorem 18]{Kuch} or \cite[Section 4]{BoEgRu}), but we recall the proof for clarity.
\begin{proof}
Since $G_z:=(H_{\cQ}-z)^{-1}$ is a bounded map from $L^2(\cG)\To D(H^{\cQ})$, and $(D(H^{\cQ}),\|\cdot\|_{H^2})$ is a closed subspace of $(\bigoplus_{e\in E} H^2(0,L_e),\|\cdot\|_{H^2})$ by usual trace estimates, and each embedding $H^2(0,L_e)\hookrightarrow L^2(0,L_e)$ is trace-class\footnote{In fact, for any bounded domain $\Omega\subset \R^n$ with the cone property, the embedding $H^k(\Omega)\hookrightarrow H^\ell(\Omega)$ is $p$-Schatten class if and only if $k-\ell>\frac{n}{p}$. This is a theorem of Gramsch \cite{gramsch}.}, then $G_z:L^2(\cG)\To L^2(\cG)$ is trace class.

For each $b\in \mathcal{B}$, let $\pi_b\in \mathscr{H}$ be the function with $\pi_b(b',x) =1$ if $b'=b$ or $\hat{b}$, and $0$ otherwise. The integral kernel of $\pi_bG_z\pi_b$ is $G_z((b,x),(b,y))$, which is jointly continuous (see e.g. \cite[Theorem 4.1]{BoEgRu}). It follows from \cite[Corollary III.10.2]{GoKr} or \cite[Chapter 30, Theorem 12]{Lax} that $\mathrm{Tr}(\pi_b G_z \pi_b) = \int_0^{L_b} G_z(\mathbf{x},\mathbf{x})\,\dd\mathbf{x}$.

If $\{(\phi_n^b)_{n\ge 0}\}_{b\in \mathcal{B}}$ is an orthonormal basis of $L^2(\cG)$, where $(\phi_n^b)_{n\ge 0}$ is an o.n.b. of $L^2(b)$ for each $b$, then $\mathrm{Tr}(G_z) = \frac{1}{2}\sum_{b\in \mathcal{B}} \sum_{n\ge 0} \langle \phi_n^b,G_z \phi_n^b \rangle = \frac{1}{2}\sum_{b\in \mathcal{B}} \mathrm{Tr}(\pi_b G_z \pi_b)$. The result follows.
\end{proof}

\begin{lemme}\label{lem:Herglotz}
Let $\cQ$ be a quantum graph, and let $\mathbf{x_0}, \mathbf{x_1}\in \cG$. Then, $G_z(\mathbf{x_0}, \mathbf{x_1})$ is analytic on $\C\setminus \R$, and for any $z\in \C^+=\{z:\Im z>0\}$, we have $\Im G_z(\mathbf{x_0}, \mathbf{x_0}) >0$.
\end{lemme}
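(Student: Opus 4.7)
The plan is to deduce both properties from the pointwise identity
\begin{equation}\label{e:kernelid}
G_z(\mathbf{x_0},\mathbf{x_1})-G_w(\mathbf{x_0},\mathbf{x_1})=(z-w)\int_{\cG}G_z(\mathbf{x_0},\mathbf{y})\,G_w(\mathbf{y},\mathbf{x_1})\,\dd\mathbf{y},\qquad z,w\in\C\setminus\R,
\end{equation}
which is the kernel form of the first resolvent identity $(H_\cQ-z)^{-1}-(H_\cQ-w)^{-1}=(z-w)(H_\cQ-z)^{-1}(H_\cQ-w)^{-1}$. The right-hand integral is finite because Lemma~\ref{lem:greenskernelL2} ensures $g^{z,\mathbf{x_0}},g^{w,\mathbf{x_1}}\in L^2(\cG)$; together with \eqref{eq:GreenSym} it may be rewritten as $\langle g^{\overline{z},\mathbf{x_0}},g^{w,\mathbf{x_1}}\rangle_{L^2}$.

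To establish \eqref{e:kernelid} I would test the operator identity against two sequences of non-negative compactly supported $L^2$-approximations $\chi_n^0,\chi_n^1$ of the Dirac masses at $\mathbf{x_0}$ and $\mathbf{x_1}$, normalised by $\int\chi_n^i=1$. Each side of the resulting scalar identity
$\langle\chi_n^0,(H_\cQ-z)^{-1}\chi_n^1\rangle-\langle\chi_n^0,(H_\cQ-w)^{-1}\chi_n^1\rangle=(z-w)\langle\chi_n^0,(H_\cQ-z)^{-1}(H_\cQ-w)^{-1}\chi_n^1\rangle$
converges to the corresponding term of \eqref{e:kernelid} by the joint continuity of the Green's kernel (Lemma~\ref{lem:greenskernelcts}), together with Fubini's theorem and the $L^2$-continuity of $\mathbf{u}\mapsto g^{w,\mathbf{u}}$ noted in Remark~\ref{rem:L2Cont}.

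Analyticity then follows by dividing \eqref{e:kernelid} by $z-w$ and letting $w\to z$: the ratio equals $\langle g^{\overline{z},\mathbf{x_0}},g^{w,\mathbf{x_1}}\rangle_{L^2}$, which converges because the map $w\mapsto g^{w,\mathbf{x_1}}$ is $L^2$-continuous on $\C\setminus\R$. This last continuity follows in turn from \eqref{e:kernelid} combined with the resolvent bound $\|(H_\cQ-w)^{-1}\|\le 1/|\Im w|$, which yields $\|g^{z,\mathbf{x_1}}-g^{w,\mathbf{x_1}}\|_{L^2}\le |z-w|\,\|g^{z,\mathbf{x_1}}\|_{L^2}/|\Im w|$. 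Hence $z\mapsto G_z(\mathbf{x_0},\mathbf{x_1})$ is complex differentiable on $\C\setminus\R$, and so holomorphic.

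For the positivity, specialise \eqref{e:kernelid} to $\mathbf{x_1}=\mathbf{x_0}$ and $w=\overline{z}$. Applying \eqref{eq:GreenSym} to $G_{\overline{z}}(\mathbf{y},\mathbf{x_0})=\overline{G_z(\mathbf{x_0},\mathbf{y})}$ turns the integrand into $|G_z(\mathbf{x_0},\mathbf{y})|^2$ and the prefactor into $2\ii\,\Im z$, giving
\begin{equation*}
G_z(\mathbf{x_0},\mathbf{x_0})-G_{\overline{z}}(\mathbf{x_0},\mathbf{x_0})=2\ii\,\Im z\,\|g^{z,\mathbf{x_0}}\|_{L^2}^{2}.
\end{equation*}
Since the left-hand side equals $2\ii\,\Im G_z(\mathbf{x_0},\mathbf{x_0})$ by \eqref{eq:GreenSym}, we conclude $\Im G_z(\mathbf{x_0},\mathbf{x_0})=\Im z\,\|g^{z,\mathbf{x_0}}\|_{L^2}^{2}$. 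Strict positivity for $\Im z>0$ reduces to checking $g^{z,\mathbf{x_0}}\not\equiv 0$: otherwise $[(H_\cQ-z)^{-1}f](\mathbf{x_0})=0$ for every $f\in L^2(\cG)$, forcing every $\psi\in\mathscr{H}^\cQ$ to vanish at $\mathbf{x_0}$, which fails for a suitable smooth bump in the domain. The only non-routine step is the justification of \eqref{e:kernelid}; everything else is then a short computation.
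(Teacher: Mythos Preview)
Your argument is correct and takes a genuinely different route from the paper's.

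For analyticity, the paper approximates $G_z(\mathbf{x_0},\mathbf{x_1})$ by $\langle f_n^0,(H_\cQ-z)^{-1}f_n^1\rangle$ and shows the convergence is locally uniform in $z$ via the Lipschitz bounds of Lemma~\ref{lem:BoundGreenImagin}, then invokes the fact that uniform limits of holomorphic functions are holomorphic. You instead establish the pointwise resolvent identity and prove complex differentiability directly. This is cleaner, though note one citation slip: Lemma~\ref{lem:greenskernelcts} only gives continuity in each variable separately, not joint continuity. The joint regularity you need to pass to the limit in $\langle\chi_n^0,(H_\cQ-z)^{-1}\chi_n^1\rangle$ actually comes from the Lipschitz bounds of Lemma~\ref{lem:BoundGreenImagin} (or, for the right-hand side, from Remark~\ref{rem:L2Cont} as you say).

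For strict positivity, the paper first shows $\Im G_z(\mathbf{x_0},\mathbf{x_0})\ge 0$ via the spectral theorem applied to $\langle f_n^0,(H_\cQ-z)^{-1}f_n^0\rangle$, then uses a harmonic-function argument (no interior minimum) to upgrade to either $>0$ everywhere or $\equiv 0$, and finally rules out the latter via an ODE argument and surjectivity of the resolvent. Your route is more direct and yields a sharper statement: the identity $\Im G_z(\mathbf{x_0},\mathbf{x_0})=\Im z\,\|g^{z,\mathbf{x_0}}\|_{L^2}^2$ reduces strict positivity at a single $z$ immediately to $g^{z,\mathbf{x_0}}\not\equiv 0$, bypassing the harmonic argument entirely. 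Both proofs end with the same observation that a smooth bump on the edge through $\mathbf{x_0}$ lies in $\mathscr{H}^\cQ$ and does not vanish at $\mathbf{x_0}$.
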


\begin{proof}
Let $\varphi \in C_c^\infty( \R)$ be a function with $\varphi \ge 0$ and $\int_{\R} \varphi(y) \mathrm{d}y = 1$.

Fix $\mathbf{x}_j = (b_j,x_j)$,  with $0<x_j<L_{b_j}$, for $j=0,1$. Let $\psi^j_n(x) = n\varphi(n(x_j-x))$. If $\supp \varphi \subset [-K,K]$, then $\supp \psi^j_n \subset [\frac{-K}{n}+x_j,\frac{K}{n}+x_j]$, and taking $n=n(x_0, x_1)$, we may ensure that $\supp \psi^j_n \subset (0,L_{b_j})$ for $j=0,1$.

Define $f_n^j : \cG\To \R$ by $f_n^j(b,x)=0$ if $b\notin\{b_j,\hat{b}_j\}$, $f_n^j(b_j,x)=\psi^j_n(x)$, $f_n^j(\hat{b}_j,x) = f_n^j(b_j,L_{b_j}-x)$. 

Let us write $g_z(x,y)= G_z \big{(}(b_0,x), (b_1,y)\big{)}$. We extend $g_z$ to a function $\tilde{g}_z(x,y)$ on $\R^2$ taking value zero on $\R^2 \setminus  [0, L_{b_0}]\times[0,L_{b_1}]$. If $\rho_n(t,t')=n^2\varphi(nt)\varphi(nt')$, we have
\[
\langle f_n^0, G_z f_n^1 \rangle = (\rho_n\star g_z)(x_0,x_1) \,.
\]

Since the function $g_z$ is continuous at $(x_0,x_1)$, we deduce that 
\[
G_z(\mathbf{x_0},\mathbf{x_1}) = g_z(x_0,x_1) = \lim_{n\to\infty}
 (\rho_n\star g_z)(x_0,x_1) = \lim_{n\to\infty}
 \langle f^0_n, G_z f^1_n\rangle \,.
\]
More precisely, we have
\[
 G_z(\mathbf{x_0},\mathbf{x_1}) - \langle f^0_n, G_z f^1_n\rangle  = \int_{\R^2} \big{(} g_z(x_0,x_1) -g_z(x,y)\big{)}  \rho_n(x_0-x,x_1-y)\,\dd x\,\dd y.
\]

Now, if $Z$ is a compact subset of $\C\setminus \R$, equation \eqref{eq:BoundGreenImagin} along with \eqref{eq:GreenSym} tells us that there exists a constant $C(Z)$ such that $|g_z(x_0,x_1) -g_z(x,y)|\leq C(Z) \big{(} |x_0-x| + |x_1-y|\big{)}$. From the information we have on the support of $\rho_n$, we deduce that
\[
|G_z(\mathbf{x_0},\mathbf{x}_1) - \langle f^0_n, G_z f^1_n\rangle| \leq C(Z)\frac{ K}{n} \,.
\]
Since, by \eqref{eq:BoundGreenImagin}, $G_z(\mathbf{x_0}, \mathbf{x_1})$ is bounded on the compact subsets of $\C\setminus \R$, we deduce that $\langle f^0_n, G_z f^1_n\rangle $ is a family of functions which are holomorphic on $\C\setminus \R$, which is uniformly bounded on compact sets, and which converges on compact sets. Therefore, the limit $G_z(\mathbf{x_0},\mathbf{x}_1)$ is holomorphic.

Now, by the spectral theorem, we have $ \langle f_n^0, G_z f_n^0 \rangle= \int_\R \frac{1}{\lambda-z}\, \mathrm{d}\mu_{f_n^0}(\lambda)$, which has a positive imaginary part if $z\in\C^+$. Therefore, we must have $\Im G_z(\mathbf{x_0}, \mathbf{x_0}) \geq 0$.

But the map $z\mapsto \Im G_z(\mathbf{x_0}, \mathbf{x_0})$ is harmonic (since it is the imaginary part of a holomorphic function), so it cannot have a local minimum. Therefore, we must either have $\Im G_z(\mathbf{x_0}, \mathbf{x_0}) > 0$ for all $z\in \C^+$, or $\Im G_z(\mathbf{x_0}, \mathbf{x_0}) = 0$ for all $z\in \C^+$.

Let us show that this last possibility is absurd. If $\Im G_z (\mathbf{x_0}, \mathbf{x_0})= 0$, the argument leading to \eqref{eq:Cauchy-Schwarz} would imply that $\Im G_z (\mathbf{x_0}, \mathbf{y_0}) = 0$ for all $\mathbf{y_0}\in \cG$. Since $\mathbf{y_0}\mapsto G_z (\mathbf{x_0}, \mathbf{y_0})$ satisfies an ODE with a complex potential, this implies that  $G_z (\mathbf{x_0}, \mathbf{y_0}) = 0$ for all $\mathbf{y_0}\in \cG$.
Therefore, for all $f\in \mathscr{H}$, we would have $\big((H_{\cQ}-z)^{-1} f\big)(\mathbf{x_0}) =0$. So the image of $(H_{\cQ}-z)^{-1}$ would be included in the set of functions vanishing at $\mathbf{x_0}$. This is absurd, since $(H_{\cQ}-z)^{-1}$ is surjective onto $\mathscr{H}^{\cQ}$.
\end{proof}

\medskip

{\bf{Acknowledgements:}}
N.A. was supported by Institut Universitaire de France, by the ANR project GeRaSic ANR-13-BS01-0007 and by USIAS (University of Strasbourg Institute of Advanced Study).

M.I. was supported by the Labex IRMIA  during part of the redaction of this paper.

M.S. was supported by a public grant as part of the \textit{Investissement d'avenir} project,
reference ANR-11-LABX-0056-LMH, LabEx LMH. He thanks the Universit\'e Paris Saclay
for excellent working conditions, where part of this work was done.

\providecommand{\bysame}{\leavevmode\hbox to3em{\hrulefill}\thinspace}
\providecommand{\MR}{\relax\ifhmode\unskip\space\fi MR }
\providecommand{\MRhref}[2]{%
  \href{http://www.ams.org/mathscinet-getitem?mr=#1}{#2}
}
\providecommand{\href}[2]{#2}

\end{document}